\documentclass[10pt]{article}
\usepackage{delarray} 
\usepackage{amscd}
\usepackage{amssymb}
\usepackage{amsmath}
\usepackage{amsthm,amsxtra}
\usepackage{latexsym,amsmath,mathrsfs}
\usepackage[bookmarks,colorlinks]{hyperref}
\usepackage[all]{xy}
\usepackage{amsfonts}
\usepackage{color}
\usepackage{fancybox}
\usepackage{colordvi}
\usepackage{multicol}
\usepackage{textcomp}
\usepackage{stmaryrd}
\usepackage[dvips]{graphicx}

\usepackage{enumerate,xspace}
\usepackage{geometry}
\usepackage{tocbibind}



\newcommand{\Fil}{{\mathrm{Fil}}}

\newcommand{\Tr}{{{\rm Tr}}}

\def\scrO{\mathscr{O}}

\def\val{\mathrm{val}}

\def\an{\mathrm{an}}

\def\cyc{\mathrm{cyc}}

\def\har{\mathrm{har}}

\def\rd{\mathrm{d}}

\def\Tr{\mathrm{Tr}}
\def\Trd{\mathrm{Trd}}

\def\Edg{\mathrm{Edge}}

    \newcommand{\BA}{{\mathbb {A}}} 
    \newcommand{\BC}{{\mathbb {C}}} 
     
    \newcommand{\BG}{{\mathbb {G}}} \newcommand{\BH}{{\mathbb {H}}}

     \newcommand{\BN}{{\mathbb {N}}}
     
    \newcommand{\BQ}{{\mathbb {Q}}} \newcommand{\BR}{{\mathbb {R}}}
     \newcommand{\BT}{{\mathbb {T}}}

     \newcommand{\BZ}{{\mathbb {Z}}}
    \newcommand{\CA}{{\mathcal {A}}} 
     
    \newcommand{\CE}{{\mathcal {E}}} \newcommand{\CF}{{\mathcal {F}}}
    \newcommand{\CG}{{\mathcal {G}}} \newcommand{\CH}{{\mathcal {H}}}
     
     \newcommand{\CL}{{\mathcal {L}}}
    \newcommand{\CM}{{\mathcal {M}}} \newcommand{\CN}{{\mathcal {N}}}
    \newcommand{\CO}{{\mathcal {O}}} 
     
     \newcommand{\CT}{{\mathcal {T}}}
     \newcommand{\CV}{{\mathcal {V}}}
     \newcommand{\CX}{{\mathcal {X}}}
     
     \newcommand{\RB}{{\mathrm {B}}}

     \newcommand{\RP}{{\mathrm {P}}}

    \newcommand{\fP}{{\mathfrak{P}}}

    \newcommand{\End}{{\mathrm{End}}} 
    \newcommand{\Gal}{{\mathrm{Gal}}} \newcommand{\GL}{{\mathrm{GL}}}
    \newcommand{\Hom}{{\mathrm{Hom}}}

       \newcommand{\Lie}{{\mathrm{Lie}}}
        \newcommand{\fn}{{\mathfrak{n}}}
    \newcommand{\fo}{{\mathfrak{o}}}

    \newcommand{\fl}{{\mathfrak{l}}} \newcommand{\fq}{{\mathfrak{q}}}

    \newcommand{\PGL}{{\mathrm{PGL}}} 
     
    \newcommand{\Res}{{\mathrm{Res}}}

      \newcommand{\fp}{{\mathfrak {p}}}

    \renewcommand{\mod}{\hskip 6pt \mathrm{mod} \hskip 3pt}

\newcommand{\dR}{{\mathrm{dR}}}

\newcommand{\cris}{\mathrm{cris}}

 \def\witt{\mathrm{W}}

\def\Gr{\mathrm{Gr}}

\def\witt{\mathrm{W}}

\def\pr{\mathrm{pr}}

\def\Nrd{\mathrm{Nrd}}
\def\der{\mathrm{der}}

\def\fP{\mathfrak{P}}

    \newcommand{\Spec}{{\mathrm{Spec}}}  
     \newcommand{\Sym}{{\mathrm{Sym}}}
     \newcommand{\tr}{{\mathrm{tr}}}
      \newcommand{\ur}{{\mathrm{ur}}}

    \newcommand{\st}{{\mathrm{st}}}

    \newcommand{\itPi}{{\it \Pi}}

    \theoremstyle{plain}

    \newtheorem{thm}{Theorem}[section] \newtheorem{cor}[thm]{Corollary}
    \newtheorem{lem}[thm]{Lemma} 
    \newtheorem{prop}[thm]{Proposition}
    \newtheorem {conj}[thm]{Conjecture}

    \theoremstyle{definition}

    \newtheorem{defn}[thm]{Definition}

    \theoremstyle{remark}
    \newtheorem {rem}[thm]{Remark}
    \newtheorem {ass}[thm]{Assumption}


    \numberwithin{equation}{section}

 \newcommand{\binc}[2]{ \bigg (\!\! \begin{array}{c} #1\\
    #2 \end{array}\!\! \bigg )}

\newcommand{\wvec}[4]{{\scriptsize{\big [ \!\!
\begin{array}{cc} #1 \!\!\! & \!\!\! #2 \\ #3 \!\!\! & \!\!\! #4 \end{array} \!\! \big ] }}}

\tolerance 400 \pretolerance 200

\begin{document}

\title{$L$-invariants of Hilbert modular forms}
\author{Bingyong Xie \\ \small Department of Mathematics, East China Normal University, Shanghai, China \\ \small byxie@math.ecnu.edu.cn}
\date{}
\maketitle

\begin{abstract} In this paper we show that under certain condition the Fontaine--Mazur
$L$-invariant for a Hilbert eigenform coincides with its Teitelbaum
type $L$-invariant, and thus prove a conjecture of Chida, Mok and
Park.
\end{abstract}

\section*{Introduction}

In the remarkable paper \cite{GS} Greenberg and Stevens proved a
formula  for the derivative at $s=1$ of the $p$-adic $L$-function of
an elliptic curve $E$ over $\BQ$ (i.e. a modular form of weight $2$)
when $p$ is a prime of split multiplicative reduction, which is the
exceptional zero conjecture proposed by Mazur, Tate, and Teitelbaum
\cite{MTT}. An important quantity in this formula is the
 $L$-invariant, namely $\CL(E)=\log_p(q_E)/v_p(q_E)$ where
$q_E\in p \BZ_p$ is the Tate period for $E$.

For higher weight modular forms $f$, since \cite{MTT}, a number of
different candidates for the $L$-invariant $\CL(f)$ have been
proposed. These include:

(1) Fontaine-Mazur's $L$-invariant $\CL_{FM}$ using $p$-adic Hodge
theory,

(2) Teitelbaum's $L$-invariant $\CL_T$ built by the theory of
$p$-adic uniformization of Shimura curves,

(3) an invariant $\CL_C$ by Coleman's theory of $p$-adic integration
on modular curves,

(4) an invariant $\CL_{DO}$ due to Darmon and Orton using
``modular-form valued distributions'',

(5) Breuil's $L$-invariant $\CL_B$ by $p$-adic Langlands theory.

\noindent Now, all of these invariants are known to be equal
\cite{BDI, Breuil, CI, IoSp}. The readers are invited to consult
Colmez's paper \cite{Cz2005} for some historical account.

The exceptional zero conjecture for higher weight modular forms has
been proved by Steven using $\CL_C$ \cite{Stevens}, by
Kato--Kurihara--Tsuji using $\CL_{FM}$ (unpublished), by Orton using
$\CL_{DO}$ \cite{Orton}, by Emerton using $\CL_B$ \cite{Em} and by
Bertolini--Darmon--Iovita using $\CL_T$ \cite{BDI}.

In \cite{Mok} Mok addressed special cases of the exceptional zero
conjecture in the setting of Hilbert modular forms. In \cite{CMP}
Chida, Mok and Park introduced the Teitelbaum type $L$-invariant for
Hilbert modular forms, and conjectured that Teitelbaum type
$L$-invariant coincides with the Fontaine-Mazur $L$-invariant. We
state this conjecture precisely below.

Fix a prime number $p$. Let $F$ be a totally real field, $g=[F:\BQ]$
and $\fp$ a prime ideal of $F$ above $p$. Let $f_\infty$ be a
Hilbert eigen newform with even weight $(k_1, \cdots, k_g, w)$ and
level divided exactly by $\fp$ (i.e. not by $\fp^2$). Here ``even
weight'' means that $k_1,\cdots ,k _g, w$ are all even.

On one hand, by Carayol's result \cite{Car2} we can attach to
$f_\infty$ a $p$-adic representation of
$G_{\BQ}=\mathrm{Gal}(\overline{\BQ}/\BQ)$. This Galois
representation (restricted to $G_{\BQ_p}$) is semistable and thus we
can attach to it a Fontaine-Mazur $L$-invariant
$\CL_{FM}(f_\infty)$.

On the other hand, Chida, Mok and Park attached to an automorphic
form  $\mathbf{f}$ on a totally definite quaternion algebra over $F$
(of the same weight $(k_1, \cdots, k_g,w)$) a Teitelbaum type
$L$-invariant
 $\CL_T(\mathbf{f})$ under the following assumption
\begin{equation*} \hskip -100pt (\text{CMP}) \hskip 100pt
\mathbf{f} \text{ is new at } \fp \text{ and } U_\fp \mathbf{f} =
\CN \fp ^{w/2} \mathbf{f}.
\end{equation*}

Both $\CL_{FM}(f_\infty)$ and $\CL_T(\mathbf{f})$ are vector valued.
See Section \ref{ss:FM-L} and Section \ref{ss:Teitelbaum} for their
precise definitions.

\begin{conj} If $f_\infty$ and $\mathbf{f}$ are associated to each
other by the Jacquet-Langlands correspondence, then
$\CL_{FM}(f_\infty)=\CL_T(\mathbf{f})$.
\end{conj}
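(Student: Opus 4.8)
The plan is to realize both invariants as invariants of a single object attached to $f_\infty$ at $\fp$ --- a \emph{monodromy module} in the sense of Iovita--Spiess \cite{IoSp} --- so that the conjecture becomes a comparison forced by the Jacquet--Langlands correspondence once the Hilbert-modular bookkeeping is in place. Since both $\CL_{FM}(f_\infty)$ and $\CL_T(\mathbf{f})$ depend only on local data at $\fp$, the first step is to pass from Carayol's global Galois representation to the filtered $(\varphi,N)$-module $D := D_{\st}\big(\rho_{f_\infty}|_{G_{F_\fp}}\big)$, a free module of rank two over the maximal unramified subfield $F_{\fp,0}$ of $F_\fp$. Under the hypothesis (CMP) and the running even-weight assumption the representation at $\fp$ is semistable but non-crystalline: the monodromy operator $N$ on $D$ is nonzero (this is exactly where ``$\fp$ exactly divides the level'' and the eigenvalue condition $U_\fp\mathbf{f}=\CN\fp^{w/2}\mathbf{f}$ are used), and since $D$ has rank two, $\ker N=\im N$ is a line. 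The invariant $\CL_{FM}(f_\infty)$ then records, component by component over the embeddings of $F_\fp$ into $\overline{\BQ}_p$, the position of the Hodge filtration on $D_{\dR}=D\otimes_{F_{\fp,0}}F_\fp$ relative to this line; one must show this vector equals $\CL_T(\mathbf{f})$ entrywise.

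\emph{Computing $\CL_{FM}$ geometrically.} Transfer $f_\infty$ by Jacquet--Langlands to an automorphic form on an indefinite quaternion algebra $B/F$ split at exactly one archimedean place and ramified at $\fp$; such a $B$ exists by a parity count, possibly at the cost of one auxiliary ramified prime. The associated Shimura curve $X$ has a \v{C}erednik--Drinfeld uniformization at $\fp$: the analytic space $X_{F_\fp}^{\mathrm{an}}$ is a quotient of Drinfeld's $p$-adic upper half-plane $\Omega$ by a discrete group, with Mumford-curve reduction. The representation $\rho_{f_\infty}$ appears in $H^1$ of $X$ with coefficients in the symmetric-power local system attached to the weight $(k_1,\dots,k_g,w)$, and the semistable comparison theorem identifies its de Rham / log-crystalline realization with $D$. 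Following the Hilbert analogue of \cite{IoSp} and \cite{BDI}, one describes $D$ explicitly: its elements are rigid-analytic functions on $\Omega$ twisted by the local system, $N$ is the residue map along the boundary encoded combinatorially on the Bruhat--Tits tree of $\PGL_2(F_\fp)$, and a splitting of the Hodge filtration transverse to $\ker N$ is produced by Coleman-type $p$-adic integration of those functions. This gives, for each component, a formula for $\CL_{FM}(f_\infty)$ as a ratio of a Coleman integral to an ``order'' term read off from the reduction.

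\emph{Matching with $\CL_T$.} Interchanging the invariants of $B$ at $\fp$ and at the split archimedean place yields the totally definite quaternion algebra underlying $\mathbf{f}$ (up to the auxiliary prime), and under this second Jacquet--Langlands transfer $\mathbf{f}$ corresponds to the harmonic cocycle on the tree of $\PGL_2(F_\fp)$ that already occurs in the formula above. The Teitelbaum-type invariant $\CL_T(\mathbf{f})$ of \cite{CMP} is defined by a $p$-adic integration pairing against exactly this cocycle; comparing the two integral expressions, while tracking the symmetric-power coefficients and the $\GL_2(F_\fp)$-equivariance throughout, shows they agree entry by entry, whence $\CL_{FM}(f_\infty)=\CL_T(\mathbf{f})$.

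The main obstacle is the explicit description of $D$ in the second step: a higher-weight \v{C}erednik--Drinfeld computation valid when $F_\fp$ is a possibly ramified extension of $\BQ_p$, requiring the correct integral lattices, careful control of the crystalline Frobenius (the $[F_\fp:\BQ_p]$-th power of the absolute Frobenius) and of the two-variable structure of the cohomology, and --- most delicately --- a proof that the Coleman integration splitting the Hodge filtration is compatible with Teitelbaum's integration pairing. I expect this last compatibility, rather than the Galois-theoretic input, to be where the ``certain condition'' of the main theorem enters. The archimedean parity bookkeeping and the verification that $f_\infty$ contributes to $H^1(X)$ with the expected multiplicity are routine but need care.
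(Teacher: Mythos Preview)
Your outline is essentially the paper's strategy: transfer to a Shimura curve for an indefinite quaternion algebra ramified at $\fp$, use \v{C}erednik--Drinfeld uniformization together with Coleman--Iovita's identification of $N$ with the residue map $\iota\circ I$, and compare the Coleman-integration splitting $P$ of the covering filtration with the Schneider map $\kappa^{\mathrm{sch}}$ defining $\CL_T$; the final step is exactly the short linear-algebra argument $P^\tau-\CL_{T,\tau}\,\epsilon\circ I^\tau=0$ versus $y-\CL_{FM,\tau}N(y)\in\Fil$ that you anticipate.

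One correction worth recording: the hypothesis that $\fp$ is the unique prime above $p$ does \emph{not} enter at the Coleman--Teitelbaum compatibility, which goes through unconditionally once the filtered isocrystal is known. It enters earlier, in the explicit computation of the filtered $\varphi_q$-isocrystal on the uniformized curve (your ``main obstacle''): the paper computes this via Drinfeld's universal special formal $\scrO_{B_\fp}$-module on $\widehat{\CH}$, passing through auxiliary PEL Shimura curves $M'$, $M''$ (needed because $M$ itself lacks a moduli interpretation) and Rapoport--Zink uniformization. With several primes above $p$ the relevant Rapoport--Zink space is no longer a curve, Coleman--Iovita's description of $N$ is unavailable, and the whole chain breaks; the integration step is not the bottleneck.
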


Our main result is the following

\begin{thm} \label{thm:main} $($=Theorem \ref{thm:main-text}$)$ Assume that $F$ satisfies the following condition:
$$\text{ there is no place other than } \fp  \text{ above }p. $$

\noindent Let $f_\infty$ and $\mathbf{f}$ be as above. Then
$\CL_{FM}(f_\infty)=\CL_T(\mathbf{f})$.
\end{thm}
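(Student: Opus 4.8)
\medskip

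\noindent\textbf{Proof plan.} The idea is to compute \emph{both} $\CL_{FM}(f_\infty)$ and $\CL_T(\mathbf f)$ as the same vector of $p$-adic (Coleman) integrals of the harmonic cocycle on the Bruhat--Tits tree of $\PGL_2(F_\fp)$ cut out by the automorphic representation $\pi$ that underlies $f_\infty$ and $\mathbf f$, thereby generalizing to $\GL_2/F$ the comparison carried out over $\BQ$ in \cite{BDI}, \cite{CI}, \cite{IoSp}. The hypothesis that $\fp$ is the only place of $F$ above $p$ is used \emph{only} here, and it is essential there: it forces $F\otimes_{\BQ}\BQ_p=F_\fp$, so that the relevant rigid-analytic symmetric space is a single Drinfeld half-plane $\Omega_{F_\fp}=\BP^1_{F_\fp}\setminus\BP^1(F_\fp)$ and the relevant tree is that of $\PGL_2(F_\fp)$; for several primes above $p$ one would be forced to work with a product of Drinfeld spaces, resp.\ of trees, and the explicit description of the filtered $(\varphi,N)$-module below would become substantially harder.

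\medskip

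\noindent\emph{Steps 1--2: Shimura curve, Cerednik--Drinfeld uniformization, the filtered $(\varphi,N)$-module, and $\CL_{FM}$.} By Jacquet--Langlands, transfer $\pi$ to a quaternion algebra $B/F$ ramified at $\fp$ and at all but one archimedean place (inserting one auxiliary ramified prime away from $p$ and the level, should parity require it), and let $X/F$ be the associated Shimura curve. Under our hypothesis the Cerednik--Drinfeld theorem gives, rigid-analytically, $X_{F_\fp}^{\rig}\simeq\Gamma\backslash\Omega_{F_\fp}$ for a suitable $p$-arithmetic subgroup $\Gamma\subset\PGL_2(F_\fp)$, together with a semistable integral model over $\scrO_{F_\fp}$ whose special fibre is a Mumford curve with dual graph $\Gamma\backslash(\text{tree of }\PGL_2(F_\fp))$, compatibly with the Hecke action. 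By Carayol's construction and Jacquet--Langlands, the $\pi$-isotypic part of $H^1_{\et}$ of $X_{\overline F}$ with coefficients in the automorphic local system of weight $(k_1,\dots,k_g,w)$ realizes a twist of $V|_{G_F}$; restricting to $G_{F_\fp}$ and feeding the Mumford model into log-crystalline (Hyodo--Kato) cohomology --- i.e.\ running the Coleman--Iovita computation of \cite{CI}, now over the base $F_\fp$ --- yields an explicit presentation of $D_{\st}(V|_{G_{F_\fp}})$: the underlying module together with the crystalline Frobenius $\varphi$ are read off from the $\mathbf f$-harmonic cocycle on the tree, the monodromy $N$ is the associated boundary (edge-to-vertex) map, and the Hodge filtration is the line spanned by the rigid modular form $\omega_{\mathbf f}$ on $\Omega_{F_\fp}$ together with a choice of Coleman primitive, the $g$ embeddings $F_\fp\hookrightarrow\overline\BQ_p$ (equivalently, the $g$ entries $k_1,\dots,k_g$ of the weight) producing the $g$ components throughout. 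Since $\fp$ divides the level exactly, $N\neq 0$ --- the semistable non-crystalline (``exceptional'') case --- and reading off $\CL_{FM}(f_\infty)$ as the position of the Hodge line relative to the monodromy filtration exhibits it as a vector, indexed by $\sigma\colon F_\fp\hookrightarrow\overline\BQ_p$, of logarithmic Schneider--Teitelbaum/Coleman integrals over $\BP^1(F_\fp)$ against the boundary measure $\mu_{\mathbf f}$ of the harmonic cocycle. (The condition $U_\fp\mathbf f=\CN\fp^{w/2}\mathbf f$ of $(\mathrm{CMP})$ is exactly the statement that the crystalline Frobenius eigenvalue is the expected $\CN\fp^{w/2}$, which pins down the normalization of these integrals.)

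\medskip

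\noindent\emph{Step 3: identification with $\CL_T$, and conclusion.} By Schneider--Teitelbaum duality between $\Gamma$-invariant rigid modular forms on $\Omega_{F_\fp}$ and harmonic cocycles on the tree, together with Jacquet--Langlands and strong multiplicity one, the harmonic cocycle occurring in Steps 1--2 coincides --- up to a nonzero scalar, which does not affect $\CL$ --- with the one attached to $\mathbf f$ on the totally definite quaternion algebra of \cite{CMP} via the ``new at $\fp$'' vector and the isomorphism $B_{0,\fp}^\times\cong\GL_2(F_\fp)$. Unwinding the definition in \cite{CMP}, $\CL_T(\mathbf f)$ is precisely the vector of logarithmic integrals of this cocycle, so $\CL_{FM}(f_\infty)=\CL_T(\mathbf f)$. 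Both invariants depend only on $\pi_\fp$, an unramified twist of the Steinberg representation by $(\mathrm{CMP})$, so the auxiliary choices (the extra ramified prime, the tame level) in Step 1 affect neither side; this is checked separately.

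\medskip

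\noindent\emph{Expected main obstacle.} The weight of the argument is in Step 2: carrying the Coleman--Iovita computation of the log-crystalline cohomology of a $p$-adic Mumford curve from base $\BQ_p$ to base $F_\fp$, and tracking the $g$ distinct embeddings $F_\fp\hookrightarrow\overline\BQ_p$ so that the \emph{vector}-valued Hodge filtration is matched component by component with the vector-valued monodromy, hence with the vector-valued $\CL_{FM}$. Concretely, one must check that the coefficient system $\bigotimes_\sigma\Sym^{k_\sigma-2}$ and its Hodge filtration interact correctly with Coleman integration on $\Omega_{F_\fp}$, and that the crystalline Frobenius eigenvalue equals $\CN\fp^{w/2}$, so that each component of $\CL_{FM}(f_\infty)$ is \emph{literally} the corresponding component of the Schneider--Teitelbaum integral defining $\CL_T(\mathbf f)$. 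Steps 1 and 3 are, by comparison, formal, modulo the routine independence statement just mentioned.
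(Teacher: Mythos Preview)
Your proposal is correct and follows essentially the same route as the paper: Jacquet--Langlands transfer to a Shimura curve for a quaternion algebra ramified at $\fp$, Cerednik--Drinfeld uniformization, Coleman--Iovita's description of the monodromy on $H^1_{\dR}$, and identification of both $L$-invariants with the same Coleman-versus-Schneider integral of the harmonic cocycle $c_{\mathbf f}$.

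One point deserves emphasis: your Step 2 hides the paper's real technical core. The Shimura curve $M$ attached to $B$ is \emph{not} PEL, so there is no universal abelian scheme whose Dieudonn\'e crystal one can compute directly. The paper circumvents this by introducing auxiliary PEL Shimura curves $M'\subset M''$ (for the group $G''=G\times_Z T_E$ built from an imaginary quadratic $E_0$ split at $p$), computing the filtered $\varphi_q$-isocrystal there via an explicit analysis of Drinfeld's universal special formal $\scrO_{B_\fp}$-module, and then transporting the result back to $M\times N_E$ through the morphism $\alpha$ and a compatibility of $p$-adic uniformizations. Even then the paper only obtains the isocrystal for $\pr_1^*\CF(\mathrm k)$ rather than $\CF(\mathrm k)$ itself, which is why an innocuous factor $H^0$ is carried through the final comparison. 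Your sketch is right that this is ``the expected main obstacle,'' but you should be aware that it is not simply a matter of rerunning \cite{CI} over $F_\fp$; the moduli-theoretic input is genuinely indirect.
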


We sketch the proof of Theorem \ref{thm:main}. Our method is similar
to that in \cite{IoSp}. The Galois representation attached to
$f_\infty$ comes from the \'etale cohomology $H^1_{\mathrm{et}}$ of
some local system on a Shimura curve. The technical part of our
paper is the computation of the filtered $\varphi_q$-isocrystal
attached to this local system. On one hand, Coleman and Iovita
\cite{CI} provided a precise description of the monodromy operator,
which is helpful for computing Fontaine-Mazur's $L$-invariant. On
the other hand, the Teitelbaum type $L$-invariant is closely related
to the de Rham cohomology of the filtered $\varphi_q$-isocrystal by
Coleman integration and Schneider integration. Our precise
description of the filtered $\varphi_q$-isocrystal allows us to
compute Fontaine-Mazur's $L$-invariant and the Teitelbaum type
$L$-invariant. Finally, analyzing the relation among the monodromy
operator, Coleman integration and Schneider integration finishes the
proof.

When $F$ has more than one place (say $r$ places) above $p$, our
method of computing filtered $\varphi_q$-isocrystals is not valid.
To make it work, one might have to consider the Shimura variety
studied by Rapoport and Zink \cite[Chapter 6]{RZ} (which is of
dimension $r$) instead of the Shimura curve. Coleman and Iovita's
result is valid only for curves, and so can not be applied directly.
We plan to address this problem in a future work.

Our paper is organized as follows. Fontaine-Mazur's $L$-invariant is
introduced in Section \ref{sec:FM-L-inv}. Coleman and Iovita's
result is recall in Section \ref{sec:CoIo}. Section
\ref{sec:univ-spec-mod} is devoted to compute the filtered
$\varphi_q$-isocrystal attached to the universal special formal
module. We introduce various Shimura curves, and study their
$p$-adic uniformizations following Rapoport and Zink respectively in
Section \ref{sec:sh-curves} and Section \ref{sec:p-unif}. In Section
\ref{sec:comp-iso} we use the result in Section
\ref{sec:univ-spec-mod} to determine the filtered
$\varphi_q$-isocrystals attached to various local systems on Shimura
curves. In Section \ref{sec:cover-hodge} we recall the theory of de
Rham cohomology of certain local systems, and in Section
\ref{sec:Teit-L-inv} we recall Chida, Mok and Park's construction of
Teitelbaum type $L$-invariant. Finally in Section \ref{sec:compare}
we combine results in Section \ref{sec:CoIo}, Section
\ref{sec:comp-iso} and Section \ref{sec:cover-hodge}  to prove our
main theorem.

\section*{Acknowledgement} This paper is supported by
the National Natural Science Foundation of China (grant 11671137).

\section*{Notations} For two $\BQ$-algebras $A$ and $B$, write $A\otimes B$ for
$A\otimes_{\BQ}B$. For a ring $R$ let $R^\times$ denote the
multiplicative group of invertible elements in $R$. For a linear
algebraic group over $\BQ$ we will identify it with its $\BQ$-valued
points. 

Let $F$ be a totally real number field, $g=[F:\BQ]$. Let $p$ be a
fixed prime. Suppose that $p$ is inertia in $F$, i.e. there exists
exactly one place of $F$ above $p$, denoted by $\fp$. If $q$ is a
power of $p$, we use $v_p(q)$ to denote $\log_pq$.

Let $\BA_f$ denote $\BQ\otimes_\BZ \widehat{\BZ}$ and let $\BA_f^p$
denote $\BQ\otimes_\BZ (\prod_{\ell\neq p}\BZ_\ell)$. Similarly for
any number field $E$ let $\BA_{E,f}$ denote
$E\otimes_\BZ\widehat{\BZ}$, the group of finite ad\`eles of $E$.


Fix an algebraic closure of $F_\fp$, denoted by $\overline{F_\fp}$,
and let $\BC_p$ be the completion of $\overline{F_\fp}$ with respect
to the $p$-adic topology. By this way we have fixed an embedding $
F_\fp \hookrightarrow \BC_p$. The Galois group
$G_{F_\fp}=\Gal(\overline{F_\fp}/F_\fp)$ can be naturally identified
with the group of continuous $F_\fp$-automorphisms of $\BC_p$.

\section{Fontaine-Mazur invariant} \label{sec:FM-L-inv}

\subsection{Monodromy modules and Fontaine-Mazur $L$-invariant}
\label{sec:mono-fontaine-mazur}

Let  $F_{\fp 0}$ be the maximal absolutely  unramified subfield of
$F_\fp$. Let $q$ be the cardinal number of the residue field of
$F_\fp$.

Let $\RB_\cris, \RB_\st$ and $ \RB_\dR$ be Fontaine's period rings
\cite{Fon}. As is well known, there are operators $\varphi$ and $N$
on $\RB_\st$, and a descending $\BZ$-filtration on $\RB_\dR$;
$\RB_\cris$ is a $\varphi$-stable subring of $\RB_\st$ and $N$
vanishes on $\RB_\cris$. Put $\RB_{\st,
F_\fp}:=\RB_\st\otimes_{F_{\fp 0}} F_\fp$; $\RB_{\st, F_\fp}$ can be
considered as a subring of $\RB_\dR$. We extend the operators
$\varphi_{q}=\varphi^{v_p(q)}$ and $N$ $F_\fp$-linearly to
$\RB_{\st, F_\fp}$.

Let $K$ be either a finite unramified extension of $F_\fp$ or the
completion of the maximal unramified extension of $F_\fp$ in
$\BC_p$. Write $G_K$ for the group of continuous automorphisms of
$\BC_p$ fixing elements of $K$. By our assumption on $K$ we have
$$ (\RB_{\cris, F_\fp})^{G_K} = (\RB_{\st, F_\fp})^{G_K} =
(\RB_{\dR})^{G_K}=K. $$

Let $L$ be a finite extension of $\BQ_p$. For an $L$-linear
representation $V$ of $G_{K}$, we put
$$D_{\st, F_\fp}(V):=(V\otimes_{\BQ_p}\RB_{\st,
F_\fp})^{G_{K}}.$$ This is a finite rank $L\otimes_{\BQ_p}K$-module.
If $V$ is semistable, then $D_{\st, F_\fp}(V)$ is a filtered
$(\varphi_q, N)$-module: the $(\varphi_q,N)$-module structure is
induced from the operators $\varphi_q=1_V \otimes \varphi_q$ and
$N=1_V\otimes N$ on $V \otimes_{\BQ_p} \RB_{\st, F_\fp}$; the
filtration comes from that on $V\otimes_{\BQ_p}\RB_\dR$. Note that
$\varphi_q$ and $N$ are $L\otimes_{\BQ_p}K$-linear.

If $L$ splits $F_\fp$, then $L\otimes_{\BQ_p}K$ is isomorphic to
$\bigoplus_\sigma L\otimes_{\sigma,F_\fp}K$, where $\sigma$ runs
through all embeddings of $F_\fp$ into $L$. Here the subscript
$\sigma$ under $\otimes$ indicates that $F_\fp$ is considered as a
subfield of $L$ via $\sigma$. Let $e_\sigma$ be the unity of the
subring $L\otimes_{\sigma, F_\fp}K$.

We shall need the notion of monodromy modules. This notion is
introduced in \cite{Ma}. However we will use the slightly different
definition given in \cite{IoSp}.

Let $T$ be a finite-dimensional commutative semisimple
$\BQ_p$-algebra. A $T$-object $D$ in the category of filtered
$(\varphi_q,N)$-modules, is called a {\it $2$-dimensional monodromy
$T$-module}, if the following hold:

$\bullet$ $D$ is a free $T_{F_\fp}$-module of rank $2$
($T_{F_\fp}=T\otimes_{\BQ_p}F_\fp$),

$\bullet$ the sequence $D\xrightarrow{N} D \xrightarrow{N} D$ is
exact,

$\bullet$ there exists an integer $j_0$ such that $\Fil^{j_0} D$ is
a free $T_{F_\fp}$-submodule of rank $1$ and $\Fil^{j_0} D \cap
\ker(N)=0$.

\begin{lem} $($\cite[Lemma 2.3]{IoSp}$)$ If $D$ is a monodromy $T$-module, then there exists a
decomposition $D=D^{(1)}\oplus D^{(2)}$ where $D^{(1)}$ and
$D^{(2)}$ are $\varphi_q$-stable free rank one
$T_{F_\fp}$-submodules such that $N:D\rightarrow D$ induces an
isomorphism $N|_{D^{(2)}}: D^{(2)}\xrightarrow{\sim} D^{(1)}$.
\end{lem}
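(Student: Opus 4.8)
The plan is to construct the splitting by hand, using only that $D$ is a free $T_{F_\fp}$-module of rank $2$, that $D\xrightarrow{N}D\xrightarrow{N}D$ is exact, and the commutation relation $N\varphi_q=q\varphi_q N$ (obtained from $N\varphi=p\varphi N$ by raising to the $v_p(q)$-th power); the hypothesis on $\Fil^{j_0}D$ will not be needed. Since $T$ is commutative semisimple over $\BQ_p$ and $F_\fp/\BQ_p$ is separable, $T_{F_\fp}=T\otimes_{\BQ_p}F_\fp$ is a finite product of fields, so a finitely generated $T_{F_\fp}$-module is free exactly when it has constant rank across the factors, and an injective $T_{F_\fp}$-linear map between free modules of equal rank is an isomorphism. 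Exactness means $\ker N=\im N$; since these ranks add up to $2$ on each factor field, both $\ker N$ and $\im N$ are free of rank $1$, and $N^{2}=0$. Put $D^{(1)}:=\ker N=\im N$. It is $\varphi_q$-stable: if $Nx=0$ then $N(\varphi_q x)=q\,\varphi_q(Nx)=0$; and, $\varphi_q$ being bijective and $D^{(1)}$ finite dimensional, $\varphi_q$ restricts to an automorphism of $D^{(1)}$. Finally, $N$ induces an isomorphism $\bar N\colon D/D^{(1)}\xrightarrow{\sim}D^{(1)}$, since $\ker N=\im N=D^{(1)}$.

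Next I would promote the evident $T_{F_\fp}$-splitting to a $\varphi_q$-equivariant one. Choose a $T_{F_\fp}$-basis $v_1$ of $D^{(1)}$, and $v_2\in D$ lifting a basis $\bar v_2$ of $D/D^{(1)}$; then $(v_1,v_2)$ is a $T_{F_\fp}$-basis of $D$. Crucially $\varphi_q=\varphi^{v_p(q)}$ is trivial on the maximal unramified subfield of $F_\fp$, hence $F_\fp$-linear, and together with $T$-linearity this makes $\varphi_q$ a $T_{F_\fp}$-linear operator on $D$; write $\varphi_q v_1=\alpha v_1$ and $\varphi_q v_2=\beta v_2+c\,v_1$ with $\alpha,\beta\in T_{F_\fp}^{\times}$ (units because $\varphi_q$ is bijective on $D^{(1)}$ and on $D/D^{(1)}$) and $c\in T_{F_\fp}$. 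Write $Nv_2=\gamma v_1$; then $\gamma\in T_{F_\fp}^{\times}$ because $\bar N$ is an isomorphism. Applying $N$ to $\varphi_q v_2=\beta v_2+c v_1$ and using $N\varphi_q=q\varphi_q N$ gives $\beta\gamma v_1=q\alpha\gamma v_1$, hence $\beta=q\alpha$. Now set $v_2':=v_2-t\,v_1$ with $t\in T_{F_\fp}$; since $\varphi_q$ is $T_{F_\fp}$-linear,
\[
\varphi_q v_2'=\beta v_2+c\,v_1-t\alpha\,v_1=\beta v_2'+\bigl(c+(\beta-\alpha)t\bigr)v_1=\beta v_2'+\bigl(c+(q-1)\alpha\,t\bigr)v_1 .
\]
As $\alpha$ is a unit and $q-1$ is a nonzero integer, hence invertible in the $\BQ_p$-algebra $T_{F_\fp}$, the choice $t:=-c\bigl((q-1)\alpha\bigr)^{-1}$ yields $\varphi_q v_2'=\beta v_2'$. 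Then $D^{(2)}:=T_{F_\fp}v_2'$ is $\varphi_q$-stable and free of rank one, $D=D^{(1)}\oplus D^{(2)}$ because $\bar v_2'=\bar v_2$ still generates $D/D^{(1)}$, and $N$ carries the generator $v_2'$ to $Nv_2=\gamma v_1$, so $N|_{D^{(2)}}\colon D^{(2)}\xrightarrow{\sim}D^{(1)}$ (while $N|_{D^{(1)}}=0$). This is the asserted decomposition.

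I do not expect a genuine obstacle here: the content is essentially that a certain $\Ext^{1}$ in the category of $\varphi_q$-modules over $T_{F_\fp}$ vanishes, which it does once the two ``Frobenius eigenvalues'' on the associated lines differ. The only points requiring a little care are the reduction to $T_{F_\fp}$ being a product of fields (so that ``free of rank one'' and ``injective $\Rightarrow$ isomorphism'' behave as expected) and the fact that one is dividing by $q-1\ne 0$; the latter is precisely where it matters that $\varphi_q$ is a power of $\varphi$, so that the Frobenius twist on $T_{F_\fp}$ disappears and the obstruction is the honest scalar $(q-1)\alpha$ rather than something one cannot invert.
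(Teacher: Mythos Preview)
The paper does not supply its own proof of this lemma; it merely cites \cite[Lemma 2.3]{IoSp}. So there is nothing in the paper to compare against directly.

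Your argument is correct. The essential points are all in order: $T_{F_\fp}$ is a finite product of fields because $T$ is semisimple and $F_\fp/\BQ_p$ is separable; exactness forces $\ker N=\im N$ to be free of rank one on each factor; $\varphi_q$ is genuinely $T_{F_\fp}$-linear in this setting (the paper notes this explicitly just before the definition of monodromy modules); the relation $N\varphi_q=q\,\varphi_q N$ gives $\beta=q\alpha$; and then the obstruction to splitting off a $\varphi_q$-stable complement is the solvability of $c+(q-1)\alpha\,t=0$, which holds since $q-1$ is a $p$-adic unit and $\alpha\in T_{F_\fp}^{\times}$. Your observation that the filtration hypothesis is irrelevant for this particular decomposition is also correct --- that hypothesis enters only when one defines $\CL_{FM}(D)$ afterwards. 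This is essentially the standard argument one would expect to find in \cite{IoSp}, adapted to the $\varphi_q$-linear (rather than $\varphi$-semilinear) setting.
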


Let $D$ be a monodromy $T$-module and let $j_0$ be as above. The
{\it Fontaine-Mazur $L$-invariant} of $D$ is defined to be the
unique element in $T_{F_\fp}$, denoted as $\CL_{FM}(D)$, such that
$x-\CL_{FM}(D)N(x)\in \Fil^{j_0} D$ for every $x\in D^{(2)}$.

What we are interested in is the case when $T$ is an $L$-algebra,
where $L$ is a field splitting $F_\fp$. Note that we have an
decomposition of $ T_{F_\fp} $:
\begin{equation} \label{eq:decom}
T_{F_\fp} \xrightarrow{\sim} \bigoplus_\sigma T_\sigma, \hskip 10pt
t\otimes a\mapsto (\sigma(a)t)_\sigma,
\end{equation} where $\sigma$ runs through all embeddings of $F_\fp$ in $L$. The
index $\sigma$ in $T_\sigma$ indicates that $T$ is considered as an
$F_\fp$-algebra via $\sigma$. Then we have a decomposition of $D$ by
$D\simeq \bigoplus_\sigma D_\sigma$, where $D_\sigma=e_\sigma \cdot
D$. Each $D_\sigma$ is stable under $\varphi_q$ and $N$. Note that,
for every $j$, $\Fil^j D$ is a $T_{F_\fp}$-submodule. Thus the
filtration on $D$ restricts to a filtration on $D_\sigma$ for each
$\sigma$, and satisfies $\Fil^j D = \bigoplus_\sigma \Fil^j
D_\sigma$ for all $j\in \BZ$.

Using the decomposition (\ref{eq:decom}) we may write $\CL_{FM}(D)$
in the form $(\CL_{FM,\sigma}(D))_\sigma$. It is easy to see that
$\CL_{FM,\sigma}(D)$ is the unique element in $T$ such that
$x-\CL_{FM,\sigma}(D)N(x)\in \Fil^{j_0} D_\sigma$ for every $x\in
D^{(2)}_\sigma$. We also call $(\CL_{FM,\sigma}(D))_\sigma$, a
vector with values in $T$, the {\it Fontaine-Mazur $L$-invariant} of
$D$.

\subsection{Fontaine-Mazur $L$-invariant for Hilbert modular forms}
\label{ss:FM-L}

Let $\{\tau_1,\cdots, \tau_g\}$ be the set of real embeddings
$F\hookrightarrow \BR$. Fix a multiweight $\mathrm{k}=(k_1,\cdots,
k_g,w)\in \BN^{g+1}$ satisfying $k_i\geq 2$ and $k_i\equiv w\mod 2$.

Let $\pi=\otimes_v \pi_v$ be a cuspidal automorphic representation
of $\GL(2,\BA_F)$ such that for each infinite place $\tau_i$, the
$\tau_i$-component $\pi_{\tau_i}$ is a holomorphic discrete series
representation $D_{k_i}$. Let $\fn$ be the level of $\pi$.

Carayol \cite{Car2} attached to such an automorphic representation
(under a further condition) an $\ell$-adic Galois representation,
which will be recalled below.

Let $L$ be a sufficiently large number field of finite degree over
$\BQ$ such that the finite part $\pi^\infty=\otimes_{\fp\nmid
\infty}\pi_\fp$ of $\pi$ admits an $L$-structure $\pi_L^\infty$. The
fixed part $(\pi^\infty_L)^{K_1(\fn)}$ is of dimension $1$ and
generated by an eigenform $f_\infty$. In this case we write
$\pi_{f_\infty}$ for $\pi$.

The local Langlands correspondence associates to every irreducible
admissible representation $\pi$ of $\GL(2, F_\fp)$ defined over $L$
a $2$-dimensional $L$-rational Frobenius-semisimple representation
$\sigma(\pi)$ of the Weil-Deligne group
$WD(\overline{F}_\fp/F_\fp)$. Let $\check{\sigma}(\pi)$ denote the
dual of $\sigma(\pi)$.

For an $\ell$-adic representation $\rho$ of $\Gal(\overline{F}/F)$,
let $\rho_\fp$ denote its restriction to
$\Gal(\overline{F}_\fp/F_\fp)$, $'\hskip -3pt \rho_\fp$ the
Weil-Deligne representation attached to $\rho_\fp$ and $'\hskip
-3pt\rho_\fp^{\text{F-ss}}$ the Frobenius-semisimplification of
$'\hskip -3pt\rho_\fp$.

\begin{thm} \label{thm:car} \cite{Car2} Let $f_\infty$ be an eigenform of multiweight $k$ satisfying
the following condition:
\begin{quote} If $g=[F:\BQ]$ is even, then there exists a finite
place $\fq$ such that the $\fq$-factor $\pi_{f_\infty, \fq}$ lies in
the discrete series.
\end{quote} Then for any prime number $\ell$ and a finite place $\lambda$
of $L$ above $\ell$, there exists a $\lambda$-adic representation
$\rho=\rho_{f_\infty,\lambda}: \Gal(\overline{F}/F)\rightarrow
\GL_{L_\lambda}(V_{f_\infty,\lambda})$ satisfying the following
property:

For any finite place $\fp\nmid \ell$ there is an isomorphism
$$ '\hskip -3pt \rho_{f_\infty,\lambda, \fp}^{\mathrm{F}\text{-}\mathrm{ss}} \simeq \check{\sigma}(\pi_{f_\infty,\fp}) \otimes_L L_\lambda $$
of representations of the Weil-Deligne group
$WD(\overline{F}_\fp/F_\fp)$.
\end{thm}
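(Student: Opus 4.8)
The plan is to realize $\rho_{f_\infty,\lambda}$ geometrically in the étale cohomology of a quaternionic Shimura curve and then to read off the local Weil--Deligne representation at each $\fp\nmid\ell$ from the (possibly bad) reduction of a suitable integral model. First I would choose an auxiliary quaternion algebra $B$ over $F$ that is split at exactly one archimedean place, say $\tau_1$, and ramified at $\tau_2,\dots,\tau_g$; this is possible when $g$ is odd, and when $g$ is even the hypothesis that some finite $\fq$-component $\pi_{f_\infty,\fq}$ lies in the discrete series allows one to ramify $B$ in addition at $\fq$ (so that the parity obstruction vanishes and the Jacquet--Langlands transfer of $\pi_{f_\infty}$ to $B^\times$ is nonzero). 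The associated Shimura curve $M_K$, for $K\subset(B\otimes\BA_f)^\times$ a compact open of level prime to $\ell$ and containing enough level at $\fn$, carries for the multiweight $k$ a $\lambda$-adic local system $\CF_k$, and one sets $V_{f_\infty,\lambda}$ to be the $\pi_{f_\infty}^\infty$-isotypic component of $H^1_{\mathrm{et}}(M_{K,\overline F},\CF_k)$, which is two-dimensional over $L_\lambda$ by multiplicity one together with the Eichler--Shimura--Harder isomorphism; the $\Gal(\overline F/F)$-action on this space would then be $\rho_{f_\infty,\lambda}$.

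Next I would pin down the Weil--Deligne representation at the finite places $\fp\nmid\ell$. At a place where $\pi_{f_\infty,\fp}$ is unramified and $M_K$ has good reduction, the Eichler--Shimura congruence relation on the special fibre identifies the characteristic polynomial of the geometric Frobenius $\Frob_\fp$ on $V_{f_\infty,\lambda}$ with the Hecke polynomial of $\pi_{f_\infty,\fp}$, which is exactly the asserted isomorphism ${}'\rho_{f_\infty,\lambda,\fp}^{\mathrm{F}\text{-}\mathrm{ss}}\simeq\check\sigma(\pi_{f_\infty,\fp})\otimes_L L_\lambda$ in the unramified case. For a ramified $\fp$ I would pass to an integral model of $M_K$ with appropriate level structure at $\fp$ and analyze its nearby-cycles complex: when $\pi_{f_\infty,\fp}$ is special (Steinberg twisted by a character) the Cerednik--Drinfeld uniformization of the relevant model by Drinfeld's $p$-adic upper half plane exhibits the monodromy filtration explicitly and produces the expected non-split extension of characters with nonzero $N$; when $\pi_{f_\infty,\fp}$ is supercuspidal one invokes Carayol's study of the bad reduction of $M_K$ with Drinfeld-type level structure at $\fp$ (non-abelian Lubin--Tate theory), which shows that the inertia action on the vanishing cycles realizes $\check\sigma(\pi_{f_\infty,\fp})$. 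Compatibility of Jacquet--Langlands with the local Langlands correspondence at the place(s) where $B$ ramifies is handled separately via the known matching of Weil--Deligne parameters under $\mathrm{JL}$.

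The main obstacle is precisely this last analysis: controlling the $G_{F_\fp}$-action on the vanishing cycles of an integral model at the ramified prime, both in the special case and (especially) the supercuspidal one, and checking that the resulting Weil--Deligne representation --- including the nilpotent monodromy operator $N$ --- matches $\check\sigma(\pi_{f_\infty,\fp})$ on the nose rather than merely up to Frobenius-semisimplification. This needs either Carayol's construction of integral models carrying deep level structure together with non-abelian Lubin--Tate theory, or, in the special case, a careful bookkeeping of the monodromy pairing on the components of the Drinfeld upper half plane quotient. Once local-global compatibility is established at every $\fp\nmid\ell$, the Chebotarev density theorem applied to the traces of $\Frob_\fp$ at the unramified places shows that $\rho_{f_\infty,\lambda}$ is determined up to isomorphism by these data, which would yield the theorem.
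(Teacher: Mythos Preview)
The paper does not prove this statement: Theorem~\ref{thm:car} is stated with the attribution \cite{Car2} and is invoked as a black box, so there is no ``paper's own proof'' to compare against. Your sketch is in fact a reasonable high-level outline of Carayol's original argument in \cite{Car,Car2}: realize the Galois representation in $H^1_{\mathrm{et}}$ of a quaternionic Shimura curve with coefficients in the automorphic local system, use the Eichler--Shimura congruence at good places, and analyze vanishing cycles on integral models at bad places.

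One point deserves sharpening. You invoke Cerednik--Drinfeld uniformization whenever $\pi_{f_\infty,\fp}$ is special, but that uniformization is available only at primes where the chosen quaternion algebra $B$ is \emph{ramified}; at primes where $B$ is split and $\pi_{f_\infty,\fp}$ is special, Carayol instead works with the semistable integral model at $\Gamma_0(\fp)$-level and reads off the monodromy from its component graph. Since $B$ is fixed globally and can be ramified at only finitely many (and parity-constrained) places, you cannot arrange for $B$ to be ramified at every special prime, so both analyses are genuinely needed. Likewise, the supercuspidal case via non-abelian Lubin--Tate is carried out at primes where $B$ splits; the case where $B$ is ramified is the one handled by Jacquet--Langlands compatibility as you say. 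With that bookkeeping corrected, your outline matches Carayol's strategy.
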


Saito \cite{Saito} showed that when $\fp|\ell$,
$\rho_{f_\infty,\lambda, \fp}$ is potentially semistable.

Now we assume that $\ell=p$, $\fp$ is the prime ideal of $F$ above
$p$, and $L$ contains $F$. Let $\fP$ be a prime ideal of $L$ above
$\fp$.

\begin{thm} $($=Theorem \ref{thm:semistable}$)$ \label{thm:mono-mod}
Let $f_\infty$ be as in Theorem \ref{thm:car}, $\ell=p$ and
$\lambda=\fP$. If $f_\infty$ is new at $\fp$ (when $[F:\BQ]$ is odd,
we demand that $f_\infty$ is new at another prime ideal), then
$\rho_{f_\infty, \fP, \fp}$ is a semistable $($non-crystalline$)$
representation of $\Gal(\overline{F}_\fp/F_\fp)$, and the filtered
$(\varphi_q,N)$-module $D_{\st, F_\fp}(\rho_{f_\infty,\fP,\fp})$ is
a monodromy $L_\fP$-module.
\end{thm}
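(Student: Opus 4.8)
The plan is to deduce the statement from the local Langlands compatibility of Theorem~\ref{thm:car} together with the $p$-adic comparison theorems. First I would observe that since $f_\infty$ is new at $\fp$ and the level is divisible exactly by $\fp$, the representation $\sigma(\pi_{f_\infty,\fp})$ of the Weil--Deligne group is (the twist of) a \emph{special} representation; that is, $\pi_{f_\infty,\fp}$ is a twist of the Steinberg representation of $\GL(2,F_\fp)$. Concretely, $\check\sigma(\pi_{f_\infty,\fp}) \simeq \mathrm{Sp}(2)\otimes\chi$ for an unramified (up to the cyclotomic normalization) character $\chi$, so its underlying vector space is $2$-dimensional with a nonzero nilpotent monodromy $N$ whose image is exactly a line. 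By Theorem~\ref{thm:car} the Weil--Deligne representation $'\rho_{f_\infty,\fP,\fp'}$ attached to a prime $\fp'\neq p$ has this shape; the new-at-$\fp$ hypothesis forces $\rho_{f_\infty,\fP,\fp}$ to be non-crystalline. The extra ``new at another prime when $[F:\BQ]$ is odd'' hypothesis is precisely what is needed to invoke Theorem~\ref{thm:car} (it supplies the auxiliary discrete-series place when $g$ is odd), so this is available.

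Next I would pass to the prime $\fp\mid p$. By Saito's theorem $\rho_{f_\infty,\fP,\fp}$ is potentially semistable, and the Weil--Deligne representation associated to $D_{\mathrm{pst}}$ of $\rho_{f_\infty,\fP,\fp}$ agrees, after Frobenius-semisimplification, with $\check\sigma(\pi_{f_\infty,\fp})\otimes_L L_\fP$ by the $\ell=p$ case of local--global compatibility (again due to Carayol/Saito and Skinner, or here taken as known). Since the Weil--Deligne representation is a twist of $\mathrm{Sp}(2)$ by an \emph{unramified} character, the inertia action on $D_{\mathrm{pst}}$ is trivial, hence $\rho_{f_\infty,\fP,\fp}$ is in fact semistable, and $D_{\st,F_\fp}(\rho_{f_\infty,\fP,\fp})$ is a free $L_\fP\otimes_{\BQ_p}F_\fp$-module of rank $2$ on which $N$ is nonzero with $\ker N=\operatorname{im}N$ a rank-one submodule; this gives exactness of $D\xrightarrow{N}D\xrightarrow{N}D$. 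For the first two bullets of the definition of a $2$-dimensional monodromy $L_\fP$-module (here $T=L_\fP$, $T_{F_\fp}=L_\fP\otimes_{\BQ_p}F_\fp$) this is all that is needed.

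The remaining point, and the place where I expect the real work to lie, is the Hodge-filtration condition: that there is an integer $j_0$ with $\Fil^{j_0}D$ free of rank $1$ over $T_{F_\fp}$ and $\Fil^{j_0}D\cap\ker N=0$. The Hodge--Tate weights of $\rho_{f_\infty,\fP,\fp}$ are determined by the multiweight $\mathrm{k}=(k_1,\dots,k_g,w)$: at each embedding $\sigma$ of $F_\fp$ into $L_\fP$ the two weights are distinct (this uses the regularity built into $k_i\geq 2$), so each jump $\Fil^j D_\sigma$ is at most one of the two filtration steps, forcing each $\Fil^{j_0}D_\sigma$ to be a line; taking $j_0$ to be the larger jump (uniformly over $\sigma$, which is legitimate since the weights come from a fixed $\mathrm{k}$) makes $\Fil^{j_0}D$ free of rank one. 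Finally $\Fil^{j_0}D\cap\ker N=0$ is the assertion that the monodromy filtration and the Hodge filtration are in general position, equivalently that the $(\varphi_q,N)$-module is \emph{not} a direct sum of two rank-one pieces compatibly with the filtration; this is exactly the statement that the representation has a nonzero Fontaine--Mazur $L$-invariant, and it follows from the non-crystallinity together with admissibility (weak admissibility of the filtered module): if the intersection were nonzero, the line $\ker N$ would be a filtered $(\varphi_q,N)$-submodule, and a slope/Newton--Hodge comparison on that line versus the quotient would contradict weak admissibility of $D$. I would carry out this last comparison by computing the Newton and Hodge numbers of $D$ and of the putative sub-line from the known Frobenius eigenvalue $U_\fp = \CN\fp^{w/2}$ (equivalently the ``$q$-Weil number'' attached to $f_\infty$ at $\fp$) and the Hodge--Tate weights, and checking the inequality is strict. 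This weak-admissibility argument is the crux; the rest is bookkeeping with the decomposition \eqref{eq:decom} over the embeddings $\sigma$.
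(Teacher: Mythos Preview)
Your approach to semistability and to the first two conditions in the definition of a monodromy module is a legitimate alternative to the paper's: Saito's $\ell=p$ local--global compatibility does identify the Weil--Deligne representation as an unramified twist of $\mathrm{Sp}(2)$, which yields semistability, freeness of rank~$2$, and exactness of $D\xrightarrow{N}D\xrightarrow{N}D$. (One small correction: the extra hypothesis ``new at another prime when $[F:\BQ]$ is odd'' is not there to feed Carayol's theorem---Carayol's discrete-series condition is for $g$ even---but to guarantee the existence of a quaternion algebra $B$ ramified at $\fp$, at $g-1$ real places, and at an even total number of places; see the Remark after the statement.)

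The genuine gap is in the last step. Weak admissibility of $D$ only constrains the \emph{sum} $\sum_\sigma h'_\sigma$ of the Hodge--Tate weights of $\ker N$ over all embeddings $\sigma:F_\fp\hookrightarrow L_\fP$; it does not force $h'_\sigma=a_\sigma$ for every $\sigma$ individually. Concretely, take $g=2$ with $(k_1,k_2,w)=(4,2,4)$, so the Hodge--Tate weights are $\{0,3\}$ at $\sigma_1$ and $\{1,2\}$ at $\sigma_2$, and $v_q(\alpha)=(w-2)/2=1$ on $\ker N$. The admissibility inequality for $\ker N$ reads $\sum_\sigma h'_\sigma\le 2$, and the choice $(h'_{\sigma_1},h'_{\sigma_2})=(0,2)$ satisfies it. But then $\ker N_{\sigma_2}=\Fil^{2}D_{\sigma_2}$, and since the only $j_0$ with $\Fil^{j_0}D$ free of rank one is $j_0=2$, one gets $\Fil^{j_0}D\cap\ker N\neq 0$. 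Thus for $[F:\BQ]>1$ a purely local slope/Newton--Hodge comparison cannot rule this out; the transversality of $\ker N$ and $\Fil^{j_0}$ is genuinely global input.

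This is exactly why the paper proceeds geometrically. It realizes $\rho_{f_\infty,\fP,\fp}$ inside $H^1_{\mathrm{et}}$ of a semistable Shimura curve $X_\Sigma$, uses Faltings' comparison (Proposition~\ref{prop:semistable}) for semistability, and then computes the filtered $\varphi_q$-isocrystal explicitly (Corollary~\ref{cor:fil-isocry}). The key point is Coleman--Iovita's identification $N=\iota^\tau\circ I^\tau$ (Proposition~\ref{prop:monodromy}) together with the covering/Hodge decomposition $H^1_{\dR,\tau}=H^0(\Omega^1\otimes\mathscr{V})\oplus\iota^\tau\delta^{-1}H^1(\Gamma,V)$ of Proposition~\ref{prop:deShalit} and (\ref{eq:hodge-cover}): since $\ker N=\mathrm{im}(\iota^\tau\circ\delta^{-1})$ is complementary to the Hodge piece, $N$ is injective on $\Fil^{j_0}$ for each $\tau$ separately. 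Your weak-admissibility argument recovers this only when $g=1$.
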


\begin{rem} The conditions in Theorem \ref{thm:car} and Theorem
\ref{thm:mono-mod} are used to ensure that via the Jacquet-Langlands
correspondence $f_\infty$ corresponds to a modular form on the
Shimura curve $M$ associated to a quaternion algebra $B$ that splits
at exactly one real place; in Theorem \ref{thm:mono-mod} the
quaternion algebra $B$ is demanded to be ramified at $\fp$. See
Section \ref{ss:shimura-curves} for the construction of $M$.
\end{rem}

Thus $D_{\st, F_\fp}(\rho_{f_\infty,\fP,\fp})$ is associated with
the Fontaine-Mazur $L$-invariant. We define the {\it Fontaine-Mazur
$L$-invariant} of $f_\infty$, denoted by $\CL_{FM}(f_\infty)$, to be
that of $D_{\st,F_\fp}(\rho_{f_\infty,\fP,\fp})$.

\section{Local systems and the associated filtered
$\varphi_q$-isocrystals} \label{sec:CoIo}

Let $X$ be a $p$-adic formal $\scrO_{F_\fp}$-scheme. Suppose that
$X$ is analytically smooth over $\scrO_{F_\fp}$, i.e. the generic
fiber $X^\an$ of $X$ is smooth.


The filtered convergent $\varphi$-isocrystals attached to local
systems are studied in \cite{Faltings, CI}. 
It is more convenience for us to compute the filtered convergent
$\varphi_q$-isocrystals attached to the local systems that we will
be interested in. From now on, we will ignore ``convergent'' in the
notion.

Filtered $\varphi_q$-isocrystal is a natural analogue of filtered
$\varphi$-isocrystal. To define it one needs the notion of
$F_{\fp}$-enlargement. An {\it $F_{\fp}$-enlargement} of $X$ is a
pair $(T, x_T)$ consisting of a flat formal $\scrO_{F_\fp}$-scheme
$T$ and a morphism of formal $\scrO_{F_\fp}$-scheme $x_T:
T_0\rightarrow X$, where $T_0$ is the reduced closed subscheme of
$T$ defined by the ideal $\pi\scrO_T$.

An {\it isocrystal} $\mathscr{E}$ on $X$ consists of the following
data:

$\bullet$ for every $F_\fp$-enlargement $(T,x_T)$ a coherent
$\scrO_T\otimes_{\scrO_{F_\fp}}F_\fp$-module $\mathscr{E}_T$,

$\bullet$ for every morphism of $F_\fp$-enlargements $g: (T',
x_{T'})\rightarrow (T, x_T)$ an isomorphism of

\hskip 12pt $\scrO_{T'}\otimes_{\scrO_{F_\fp}}F_\fp$-modules
$\theta_g: g^* (\mathscr{E}_T)\rightarrow \mathscr{E}_{T'}$.

\noindent The collection of isomorphisms $\{\theta_g\}$ is required
to satisfy certain cocycle condition. If $T$ is an
$F_\fp$-enlargement of $X$, then $\mathscr{E}_T$ may be interpreted
as a coherent sheaf $E_T^\an$ on the rigid space $T^\an$.

As $X$ is analytically smooth over $\scrO_{F_\fp}$, there is a
natural integrable connection $$ \nabla_X: E_X^\an\rightarrow
E_X^\an\otimes \Omega^1_{X^\an} . $$

Note that an isocrystal on $X$ depends only on $X_0$. Let
$\varphi_q$ denote the absolute $q$-Frobenius of $X_0$. A {\it
$\varphi_q$-isocrystal} on $X$ is an isocrystal $\mathscr{E}$ on $X$
together with an isomorphism of isocrystals $\varphi_q:
\varphi_q^*\mathscr{E}\rightarrow \mathscr{E}$. A {\it filtered
$\varphi_q$-isocrystal} on $X$ is a $\varphi_q$-isocrystal
$\mathscr{E}$ with a descending $\BZ$-filtration on $E_X^\an$.


The following well known result compares the de Rham cohomology of a
filtered $\varphi_q$-isocrystal and the \'etale cohomology of the
$\BQ_p$-local system associated to it.

\begin{prop}\label{prop:semistable} \cite[Theorem 3.2]{Faltings}
Suppose that $X$ is a semistable proper curve over $\scrO_{F_\fp}$.
Let $\mathscr{E}$ be a filtered $\varphi_q$-isocrystal over $X$ and
$\CE$ be a $\BQ_p$-local system over $X_{\overline{F}_\fp}$ that are
attached to each other. Then the Galois representation
$H^i_\mathrm{et}(X_{\overline{F}_\fp}, \CE)$ of $G_{F_\fp}$ is
semistable, and the filtered $(\varphi_q,N)$-module $D_{\st,
F_\fp}(H^i_{\mathrm{et}}(X_{\overline{F}_\fp}, \CE))$ is isomorphic
to $H^i_{\mathrm{dR}}(X^{\mathrm{an}}, \mathscr{E})$.
\end{prop}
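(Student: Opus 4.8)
The plan is to deduce the proposition from Faltings' $p$-adic comparison theorem for proper semistable schemes with nontrivial coefficients, applied to the semistable curve $X/\scrO_{F_\fp}$, after the routine translations between $\varphi$ and $\varphi_q=\varphi^{v_p(q)}$ and between the log-crystalline cohomology of the special fibre $X_0$ and the de Rham cohomology of the rigid generic fibre $X^{\an}$.

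First I would make precise the category of coefficients. Equip $X_0$ with the log structure determined by the semistable model, and let $(\mathscr{E}^{\cris},\nabla,\Fil)$ be the filtered log-$F$-isocrystal underlying the given filtered $\varphi_q$-isocrystal $\mathscr{E}$, the filtration being the one on $E_X^{\an}$. Recall that Faltings' ``associated'' formalism attaches to such data a $\BQ_p$-local system on $X^{\an}_{\overline{F}_\fp}$, via Fontaine's functors and almost-\'etale descent, and that the hypothesis that $\mathscr{E}$ and $\CE$ ``are attached to each other'' means exactly that $\CE$ is this local system. In particular $\CE$ carries a canonical descent datum down to $F_\fp$, which is what endows $H^i_{\mathrm{et}}(X_{\overline{F}_\fp},\CE)$ with its $G_{F_\fp}$-action, and the whole construction is functorial.

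The heart of the matter is then Faltings' comparison isomorphism: a natural $G_{F_\fp}$-equivariant isomorphism
$$H^i_{\mathrm{et}}(X_{\overline{F}_\fp},\CE)\otimes_{\BQ_p}\RB_{\st}\;\xrightarrow{\ \sim\ }\;H^i_{\log\text{-}\cris}(X_0,\mathscr{E}^{\cris})\otimes_{F_{\fp 0}}\RB_{\st}$$
compatible with $\varphi$, with the monodromy operator $N$, and, after $\otimes_{\RB_{\st}}\RB_{\dR}$, with the filtrations. Since $X$ is a curve, all finiteness and spectral-sequence degeneration issues are harmless, and in the inert case of interest one has $F_{\fp 0}=F_\fp$, so no extra base change intervenes. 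Applying $(\,-\otimes_{\BQ_p}\RB_{\st,F_\fp})^{G_{F_\fp}}$ and using $(\RB_{\st,F_\fp})^{G_{F_\fp}}=F_\fp$ together with the fundamental exact sequences, one extracts a $(\varphi_q,N)$-equivariant, filtration-compatible isomorphism
$$D_{\st,F_\fp}\bigl(H^i_{\mathrm{et}}(X_{\overline{F}_\fp},\CE)\bigr)\;\cong\;H^i_{\log\text{-}\cris}(X_0,\mathscr{E}^{\cris})\otimes_{F_{\fp 0}}F_\fp;$$
semistability of the representation drops out because this forces the dimension of $D_{\st,F_\fp}$ to be the full expected one. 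Finally I would identify the right-hand side with $H^i_{\dR}(X^{\an},\mathscr{E})$ via the Hyodo--Kato / Berthelot--Ogus comparison between log-crystalline cohomology of the special fibre and de Rham cohomology of the generic fibre, checking that the Hodge filtration transported from $\RB_{\dR}$ agrees with the one coming from $\Fil$ on $E_X^{\an}$; this pins down the filtered-module structure.

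The main obstacle is the coefficient theory together with the monodromy operator $N$. One needs to know that the local system associated to $\mathscr{E}$ is semistable in families (Faltings' ``associated'' formalism, or its later incarnations through the $p$-adic Simpson correspondence and the work of Tsuji, Andreatta--Iovita, Scholze), and that the operator $N$ on log-crystalline cohomology corresponds on the \'etale side to the monodromy of the semistable degeneration: it is exactly this $N$, produced by the two local trivializations of the log structure at the nodes of $X_0$, that makes $H^i_{\mathrm{et}}(X_{\overline{F}_\fp},\CE)$ semistable but in general non-crystalline, and it must be tracked through every step. As this is ultimately Faltings' theorem once the hypotheses are in place, the real work is organizational: setting up $\mathscr{E}^{\cris}$ and $\Fil$ correctly, confirming that $X$ meets the properness and semistability hypotheses, and verifying compatibility of $\varphi_q$, $N$ and $\Fil$ all the way through.
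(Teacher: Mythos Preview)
The paper does not supply its own proof of this proposition: it is stated as a black-box citation to \cite[Theorem 3.2]{Faltings} and used as input. Your outline is essentially the standard unpacking of that citation --- Faltings' comparison isomorphism for semistable proper schemes with log-$F$-isocrystal coefficients, followed by taking $G_{F_\fp}$-invariants and invoking the Hyodo--Kato/Berthelot--Ogus identification of log-crystalline and de Rham cohomology --- so there is no discrepancy in approach to discuss.

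One small caution: you assert that ``in the inert case of interest one has $F_{\fp 0}=F_\fp$''. The paper's global hypothesis is only that there is a unique prime $\fp$ above $p$, not that $F_\fp/\BQ_p$ is unramified; indeed $F_{\fp 0}$ is introduced precisely as the maximal absolutely unramified subfield of $F_\fp$. This does not affect the argument (the base change $\otimes_{F_{\fp 0}}F_\fp$ is exactly what the functor $D_{\st,F_\fp}$ is designed to absorb), but you should not collapse the two fields in your write-up.
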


Coleman and Iovita \cite{CI} gave a precise description of the
monodromy $N$ on $H^1_{\mathrm{dR}}(X^{\mathrm{an}}, \mathscr{E})$.

Now let $X$ be a connected, smooth and proper curve over $F_\fp$
with a regular semistable model $\CX$ over $\scrO_{F_\fp}$ such that
all irreducible components of its special fiber $\overline{\CX}$ are
smooth. For a subset $U$ of $\overline{\CX}$ let $]U[$ denote the
tube of $U$ in $X^\an$. We associate to $\overline{\CX}$ a graph
$\Gr(\overline{\CX})$. Let $\mathrm{n}:
\overline{\CX}^{\mathrm{n}}\rightarrow \overline{\CX}$ be the
normalization of $\overline{\CX}$. The vertices
$\mathrm{V}(\overline{\CX})$ of $\Gr(\overline{\CX})$ are
irreducible components of $\overline{\CX}$. For every vertex $v$ let
$C_v$ be the irreducible component corresponding to $v$. The edges
$\mathrm{E}(\overline{\CX})$ of $\Gr(\overline{\CX})$ are ordered
pairs $\{x,y\}$ where $x$ and $y$ are two different liftings in
$\overline{\CX}^\mathrm{n}$ of a singular point. Let $\tau$ be the
involution on $\mathrm{E}(\overline{\CX})$ such that
$\tau\{x,y\}=\{y,x\}$. Below, for a module $M$ on which $\tau$ acts,
set $M^\pm=\{m\in M: \tau(m)=\pm m\}$.

Let $\mathscr{E}$ be a filtered $\varphi_q$-isocrystal over $X$. For
any $e=\{x,y\}\in\mathrm{E}(\overline{\CX})$ let $H^i_\dR(]e[,
\mathscr{E})$ denote $H^i_\dR(]\mathrm{n}(x)[,\mathscr{E})$. Then
$\tau$ exchanges $H^i_\dR(]e[,\mathscr{E})$ and
$H^i_\dR(]\bar{e}[,\mathscr{E})$ where $\bar{e}=\{y,x\}$. Note that
$\{C_v\}_{v\in \mathrm{V}(\overline{\CX})}$ is an admissible
covering of $X^\an$. From the Mayer-Vietorus exact sequence with
respect to this admissible covering we obtain the following short
exact sequence $$\label{eq:exact-sq} %
\small \xymatrix{ 0 \ar[r] & (\bigoplus_{e\in
\mathrm{E}(\overline{\CX})}H^0_\dR(]e[,\mathscr{E}))^-/
\bigoplus_{v\in
\mathrm{V}(\overline{\CX})}H^0_\dR(]C_v[,\mathscr{E}) \ar[r]^{\hskip
70pt \iota} &  H^1_\dR(X^\an, \mathscr{E})  \\ \ar[r] &
\ker\Big(\bigoplus_{v\in \mathrm{V}(\overline{\CX})}H^1_\dR(]C_v[,
\mathscr{E}) \rightarrow (\bigoplus_{e\in
\mathrm{E}(\overline{\CX})} H^1_\dR(]e[,\mathscr{E}))^+\Big) \ar[r]
& 0.}$$ 
For any $e\in \mathrm{E}(\overline{\CX}) $ there is a
natural residue map $\Res_e: H^1_\dR(]e[, \mathscr{E})\rightarrow
H^0_\dR(]e[, \mathscr{E})$ \cite[Section 4.1]{CI}. These residue
maps induce a map
$$ \bigoplus_{e\in\mathrm{E}(\overline{\CX})}\Res_e: \hskip 10pt  \Big(\bigoplus_{e\in\mathrm{E}(\overline{\CX})}
H^1_\dR(]e[,\mathscr{E})\Big)^+ \rightarrow
\Big(\bigoplus_{e\in\mathrm{E}(\overline{\CX})}
H^0_\dR(]e[,\mathscr{E})\Big)^-. $$

\begin{prop} \label{prop:monodromy} \cite[Theorem 2.6, Remark 2.7]{CI}
The monodromy operator $N$ on $H^1_\dR(X^\an, \mathscr{E})$
coincides with the composition
$$ \iota \circ (\bigoplus_{e\in \mathrm{E}(\overline{\CX})} \Res_e) \circ \Big(H^1_\dR(X^\an, \mathscr{E})
\rightarrow (\bigoplus_{e\in\mathrm{E}(\overline{\CX})}
H^1_\dR(]e[,\mathscr{E}))^+\Big)
$$ where $H^1_\dR(X^\an, \mathscr{E})\rightarrow \Big(\bigoplus_{e\in\mathrm{E}(\overline{\CX})}
H^1_\dR(]e[,\mathscr{E})\Big)^+$ is the restriction map. \end{prop}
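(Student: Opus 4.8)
The plan is to deduce the formula for $N$ from an explicit computation on the \v{C}ech--de Rham complex of the admissible covering $\{\,]C_v[\,\}_{v\in\mathrm{V}(\overline{\CX})}$, after first re-expressing $N$ through the log structure carried by the semistable model. The starting remark is that $H^1_\dR(X^\an,\mathscr{E})$ carries the operator $N$ only by transport of structure: by Proposition \ref{prop:semistable} it is identified with $D_{\st,F_\fp}(H^1_{\mathrm{et}}(X_{\overline{F}_\fp},\CE))$, on which $N=1\otimes N$ acts through $\RB_{\st,F_\fp}$. By the log-de Rham (Hyodo--Kato) description of this filtered $(\varphi_q,N)$-module over the regular semistable model $\CX$, the operator $N$ is the monodromy operator of the log structure defined by $\overline{\CX}\subset\CX$, which locally at each node of $\overline{\CX}$ is a residue along the singular locus. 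This identification of $N$ with a ``log-monodromy'' is the only non-combinatorial input I would borrow; everything afterwards is bookkeeping on the reduction graph.

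Next I would run the \v{C}ech--de Rham spectral sequence for the covering $\{\,]C_v[\,\}_v$. Its nerve is the graph $\Gr(\overline{\CX})$, so the only nonempty pairwise intersections are the annuli $]e[$, $e\in\mathrm{E}(\overline{\CX})$, and there are no triple intersections; hence the spectral sequence degenerates at $E_2$. In total degree $1$ the two surviving terms are $E_2^{1,0}=\bigl(\bigoplus_e H^0_\dR(]e[,\mathscr{E})\bigr)^-\big/\bigoplus_v H^0_\dR(]C_v[,\mathscr{E})$ and $E_2^{0,1}=\ker\bigl(\bigoplus_v H^1_\dR(]C_v[,\mathscr{E})\to(\bigoplus_e H^1_\dR(]e[,\mathscr{E}))^+\bigr)$, and the resulting two-step filtration of $H^1_\dR(X^\an,\mathscr{E})$ is the one expressed by the short exact sequence displayed before the Proposition, with $\iota$ the edge map $E_2^{1,0}\hookrightarrow H^1_\dR(X^\an,\mathscr{E})$; in particular $\mathrm{im}(\iota)$ is the subspace of classes restricting to zero on every component tube $]C_v[$. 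The $\tau$-(anti)invariance appearing throughout is merely the passage from ordered to unordered pairs of branches at a node, together with the sign in the \v{C}ech differential.

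Then I would match $N$ with the stated composition. Restriction of a class to a tube $]e[$ of a node does not depend on the ordering of the two branches, so the restriction map $r\colon H^1_\dR(X^\an,\mathscr{E})\to\bigl(\bigoplus_e H^1_\dR(]e[,\mathscr{E})\bigr)^+$ lands in the $\tau$-invariants; on the annulus $]e[$ the residue map $\Res_e$ of \cite[Section 4.1]{CI} is precisely the local incarnation of the log-monodromy supported at that node, and since $\Res_{\bar e}=-\Res_e$ (the two branches carry opposite orientations) the total residue $\bigoplus_e\Res_e$ sends $\tau$-invariants to $\tau$-anti-invariants. It then remains to see that the resulting element of $\bigl(\bigoplus_e H^0_\dR(]e[,\mathscr{E})\bigr)^-$, reduced modulo $\bigoplus_v H^0_\dR(]C_v[,\mathscr{E})$ and pushed into $H^1_\dR(X^\an,\mathscr{E})$ by $\iota$, reproduces $N$ of the class one started with; this falls out of tracing the log-monodromy through the identification of the monodromy-weight filtration on $H^1_\dR(X^\an,\mathscr{E})$ with the two-step filtration coming from the covering. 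This is the asserted identity $N=\iota\circ(\bigoplus_e\Res_e)\circ r$. As a sanity check it yields $N^2=0$: every class in $\mathrm{im}(\iota)$ restricts to zero on each $]C_v[$, hence on each $]e[$, so $r\circ\iota=0$.

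The step I expect to be the genuine obstacle is the matching in the third paragraph, i.e.\ showing that the abstractly defined, $p$-adic Hodge-theoretic operator $N$ is literally computed by ``restrict to the annuli, take residues, re-include via $\iota$''. Concretely this demands: (a) identifying the monodromy-weight filtration on $H^1_\dR(X^\an,\mathscr{E})$ with the two-step filtration produced by the covering $\{\,]C_v[\,\}_v$; (b) a local analysis at each node comparing the log-de Rham differential $d\log t$ with the residue map $\Res_e$; and (c) verifying that the family of residues represents the \v{C}ech cocycle to which $\iota$ is applied, with the correct signs. This compatibility is the technical core of Coleman and Iovita's argument, and is where essentially all of the work resides.
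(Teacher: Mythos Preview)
The paper does not prove this proposition: it is stated with an attribution to Coleman and Iovita \cite[Theorem 2.6, Remark 2.7]{CI} and no argument is supplied. There is therefore nothing in the paper to compare your proposal against; you are attempting to sketch the Coleman--Iovita proof itself.

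As such a sketch your outline is reasonable in its architecture. The \v{C}ech--de Rham analysis for the covering $\{\,]C_v[\,\}_v$ does produce the short exact sequence displayed before the proposition (this is exactly the Mayer--Vietoris sequence the paper records), and the identification of the $\tau$-signs with the \v{C}ech differential and the orientation of edges is correct. You are also right that the entire content sits in your step (3): one must show that the abstract $N$ inherited from $\RB_{\st,F_\fp}$ via Proposition \ref{prop:semistable} agrees, through the Hyodo--Kato comparison, with the log-monodromy of the semistable model, and that the latter is computed on each annulus $]e[$ by the residue map $\Res_e$. This is not bookkeeping; it is the main theorem of \cite{CI}, resting on the machinery of \cite{CI99} and on a careful local analysis of the log structure at each node. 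Your proposal candidly flags this as ``the genuine obstacle'' and then defers to Coleman--Iovita for it, so what you have written is an accurate map of where the proof lives rather than a proof. Since the paper itself only cites the result, that is in fact all that is called for here.
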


\section{The universal special formal module}
\label{sec:univ-spec-mod}

\subsection{Special formal modules and Drinfeld's moduli theorem}

Let $B_\fp$ be the quaternion algebra over $F_\fp$ with invariant
$1/2$. So $B_\fp$ is isomorphic to $F_\fp^{(2)}[\itPi]$;
$\itPi^2=\pi$ and $\itPi a=\bar{a} \itPi$ for all $a\in
F^{(2)}_\fp$. Here, $\pi$ is a fixed uniformizer of $F_\fp$,
$F_\fp^{(2)}$ is the unramified extension of $F_\fp$ of degree $2$,
and $a\mapsto \bar{a}$ denotes the nontrivial $F_\fp$-automorphism
of $F^{(2)}_\fp$.

Let $\scrO_{B_\fp}$ be the ring of integers in $B_\fp$. Let $F_{\fp
0}$ be the maximal absolutely unramified subfield of $F_\fp$, $k$
the residue field of $F_\fp$, and $F_{\fp 0}^{(2)}$ the unramified
extension of $F_{\fp 0}$ of degree $2$.

Let $\scrO^\ur$ denote the maximal unramified extension of
$\scrO_{F_\fp}$, $\widehat{\scrO^\ur}$ its $\pi$-adic completion.
Fix an algebraic closure $\bar{k}$ of $k$. We identify
$\widehat{\scrO^\ur}/\pi\widehat{\scrO^\ur}$ with $\bar{k}$. Then
$\witt(\bar{k})\otimes_{\scrO_{F_\fp^0}}\scrO_{F_\fp}\cong
\widehat{\scrO^\ur}$. Let $\widehat{F_\fp^\ur}$ be the fractional
field of $\widehat{\scrO^\ur}$.

We use the notion of special formal $\scrO_{B_\fp}$-module in
\cite{Drin}.

First we fix a special formal $\scrO_{B_\fp}$-module over $\bar{k}$,
$\Phi$, as in \cite[(3.54)]{RZ}. Let $\iota$ denote the natural
embedding of $F_{\fp 0}$ into $\witt(\bar{k})[1/p]$. Then all
embeddings of $F_{\fp 0}$ into $\witt(\bar{k})[1/p]$ are
$\varphi^{j}\circ \iota$ ($0\leq j\leq v_p(q)-1$). We have the
decomposition
$$ \scrO_{B_\fp} \otimes_{\BZ_p} \witt(\bar{k}) =
\prod_{j=0}^{v_p(q)-1} \scrO_{B_\fp} \otimes_{\scrO_{F^0_\fp},
\varphi^{j}\circ \iota} \witt(\bar{k}) .
$$ Let $u\in \scrO_{B_\fp} \otimes_{\BZ_p}
\witt(\bar{k}) $ be the element whose $\varphi^j\circ
\iota$-component with respect to this decomposition is
$$ u_{\varphi^j\circ \iota} =
\left\{\begin{array}{ll} \itPi \otimes 1 & \text{ if } j=0 , \\ 1
\otimes 1 & \text{ if } j=1, \dots, v_p(q)-1.
\end{array}\right.$$ Let $\widetilde{\mathrm{F}}$ be the
$1\otimes\varphi$-semilinear operator on
$\scrO_{B_\fp}\otimes_{\BZ_p}\witt(\bar{k})$ defined by
$$\widetilde{\mathrm{F}}x = (1\otimes \varphi)x \cdot u, \hskip 10pt x \in \scrO_{B_\fp}\otimes_{\BZ_p}\witt(\bar{k}).
$$ Let $\widetilde{\mathrm{V}}$ be the $1\otimes \varphi^{-1}$-semilinear operator
on $\scrO_{B_\fp}\otimes_{\BZ_p}\witt(\bar{k})$ such that
$\widetilde{\mathrm{F}}\widetilde{\mathrm{V}}=p$. Then
$$ (\scrO_{B_\fp}\otimes_{\BZ_p}\witt(\bar{k}), \widetilde{\mathrm{V}},
\widetilde{\mathrm{F}})$$ is a Dieudonne module over
$\witt(\bar{k})$ with an action of $\scrO_{B_\fp}$ by the left
multiplication. Let $\Phi$ be the special formal
$\scrO_{B_\fp}$-module over $\bar{k}$ whose contravariant Dieudonne
crystal is $(\scrO_{B_\fp}\otimes_{\BZ_p}\witt(\bar{k}),
\widetilde{\mathrm{V}}, \widetilde{\mathrm{F}})$. \footnote{The
Dieudonne crystal in \cite[(3.54)]{RZ} is the covariant Dieudonne
crystal of $\Phi$. The duality between our contravariant Dieudonne
crystal and the covariant Dieudonne crystal is induced by the trace
map $$<\cdot, \cdot>:\scrO_{B_\fp}\times \scrO_{B_\fp}\rightarrow
\BZ_p, (x,y)\mapsto \tr_{F_\fp/\BQ_p} \Big(\delta_{F_\fp/\BQ_p}^{-1}
\tr_{B_\fp/F_\fp} (x y^t)\Big),
$$ where $\delta_{F_\fp/\BQ_p}$ is the different of $F_\fp$ over $\BQ_p$, $\tr_{B_\fp/F_\fp}$ is the reduced trace
map, and $y\mapsto y^t$ is the involution of $B_\fp$ such that
$\itPi^t=\itPi$ and $a^t=\bar{a}$ if $a\in F_\fp^{(2)}$. Then we
have $< b \cdot  x  , y>=<x, b^t \cdot y>$ for any $b\in
\scrO_{B_\fp}$. }

Let $\iota_{0}$ and $\iota_1$ be the extensions of $\iota$ to
$F_{\fp 0}^{(2)}$. Then $$ \varphi^{j}\iota_0, \ \varphi^j \iota_1
\hskip 10pt  (0\leq j\leq v_p(q)-1)$$ are all embeddings of $F_{\fp
0}^{(2)}$ into $\witt(\bar{k})[1/p]$. We have
$$ \scrO_{B_\fp} \otimes_{\BZ_p} \witt(\bar{k}) =
\prod_{j=0}^{v_p(q)-1} \scrO_{B_\fp}
\otimes_{\scrO_{F^{(2)}_{\fp0}}, \varphi^j\circ \iota_0}
\witt(\bar{k}) \times \prod_{j=0}^{v_p(q)-1} \scrO_{B_\fp}
\otimes_{\scrO_{F^{(2)}_{\fp0}}, \varphi^j\circ \iota_1}
\witt(\bar{k}),
$$ where $\scrO_{B_\fp}$ is considered as an $\scrO_{F_\fp^{(2)}}$-module by the left multiplication. Let $X$ be the element of $\scrO_{B_\fp} \otimes_{\BZ_p}
\witt(\bar{k})$ whose $\varphi^j\circ \iota_0$-component ($0\leq
j\leq v_p(q) -1$) is $1\otimes 1$ and whose $\varphi^j\circ
\iota_1$-component
 ($0\leq j\leq v_p(q) -1$) is
$\itPi\otimes 1$. Similarly, let $Y$ be the element whose
$\varphi^j\circ \iota_0$-component ($0\leq j\leq v_p(q)-1$) is
$\itPi \otimes 1$ and whose $\varphi^j\circ \iota_1$-component
($0\leq j\leq v_p(q)-1$) is $\pi \otimes 1$. Then $\{X,Y\}$ is a
basis of $\scrO_{B_\fp} \otimes_{\BZ_p} \witt(\bar{k}) $ over
$\scrO_{F_\fp^{(2)}}\otimes _{\BZ_p} \witt(\bar{k})$.

Note that $\GL(2,F_\fp)=(\End^0_{\scrO_{B_\fp}}\Phi)^\times$
\cite[Lemma 3.60]{RZ}. We normalize the isomorphism such that the
action on the $\varphi$-module
$$(\scrO_{B_\fp}\otimes_{\BZ_p}\witt(\bar{k}),
\widetilde{\mathrm{F}})[1/p]=(B_\fp\otimes_{\BQ_p}\witt(\bar{k})[1/p],
\widetilde{\mathrm{F}})$$ is given by $\wvec{a}{b}{c}{d}X= (a\otimes
1) X+ (c\otimes 1) Y$ and $\wvec{a}{b}{c}{d}Y= (b\otimes 1) X+
(d\otimes 1) Y$.

Let $\widetilde{\mathrm{D}}_0$ denote the  $\varphi_q$-module
$$(B_\fp\otimes_{\BQ_p}\widehat{F_\fp^\ur}, \widetilde{\mathrm{F}}^{v_p(q)})$$ coming from the
$\varphi$-module $(\scrO_{B_\fp}\otimes_{\BZ_p}\witt(\bar{k}),
\widetilde{\mathrm{F}})[1/p]$.

We describe Drinfeld's moduli problem. Let $\mathrm{Nilp}$ be the
category of $\widehat{\scrO^{\ur}}$-algebras on which $\pi$ is
nilpotent. For any $A\in \mathrm{Nilp}$, let $\psi$ be the
homomorphism $\bar{k}\rightarrow A/\pi A$; let $\mathrm{SFM}(A)$ be
the set of pairs $(G, \rho)$ where $G$ is a special formal
$\scrO_{B_\fp}$-module over $A$ and $\rho: \Phi_{A/\pi A}=\psi_*
\Phi \rightarrow G$ is a quasi-isogeny of height zero.

We state a part of Drinfeld's theorem \cite{Drin} as follows.

Let $\CH$ be the Drinfeld upper half plane over $F_\fp$, i.e. the
rigid analytic $F_\fp$-variety whose $\BC_p$-points are
$\BC_p-F_\fp$.

\begin{thm} The functor $\mathrm{SFM}$ is represented by a formal
scheme $\widehat{\CH} \hat{\otimes} \widehat{\scrO^{\ur}}$ over
$\widehat{\scrO^{\ur}}$ whose generic fiber is
$\CH_{\widehat{F_\fp^\ur}}=\CH \hat{\otimes} \widehat{F_\fp^{\ur}}$.
\end{thm}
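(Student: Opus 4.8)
I would prove this by following Drinfeld \cite{Drin} (compare the expositions of Boutot--Carayol and of Rapoport--Zink \cite[Chapter 3]{RZ}). The plan is: first convert the deformation problem $\mathrm{SFM}$ into a question of semilinear algebra by crystalline Dieudonn\'e theory --- equivalently Cartier theory, or the theory of displays --- and then recognise the resulting moduli space of lattices as the Deligne--Drinfeld formal model $\widehat{\CH}$ of $\CH$, base changed to $\widehat{\scrO^{\ur}}$.

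First I would linearise. For $A\in\mathrm{Nilp}$ and $(G,\rho)\in\mathrm{SFM}(A)$, evaluate the relative Dieudonn\'e crystal of $G$ on the divided-power thickening $\witt(A)\to A$; this yields a $\witt(A)$-module $M=M(G)$ carrying an $\scrO_{B_\fp}$-action together with commuting operators $\mathrm{F}$, $\mathrm{V}$ satisfying $\mathrm{F}\mathrm{V}=\mathrm{V}\mathrm{F}=p$, and the Lie algebra $\Lie(G)$ is recovered from $(M,\mathrm{F},\mathrm{V})$. The height-zero quasi-isogeny $\rho:\Phi_{A/\pi A}\to G$ identifies $M[1/p]$ with the constant isocrystal attached to $\Phi$, namely a base change to $A$ of $\widetilde{\mathrm{D}}_0=(B_\fp\otimes_{\BQ_p}\widehat{F_\fp^{\ur}},\widetilde{\mathrm{F}}^{v_p(q)})$. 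Hence $\mathrm{SFM}(A)$ is identified with the set of $\witt(A)$-lattices $M$ in this fixed isocrystal that are $\scrO_{B_\fp}$-stable, stable under $\mathrm{F}$ and $\mathrm{V}$, of the position forced by ``height zero'', and \emph{special}, meaning that under the two $\scrO_{F_\fp}$-embeddings of $\scrO_{F_\fp^{(2)}}$ the Lie algebra $\Lie(G)=M/\mathrm{V}M$ splits as a direct sum of two invertible $A$-modules.

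Next I would exploit the $\scrO_{F_\fp^{(2)}}$-action. After the unramified base change to $\widehat{\scrO^{\ur}}$ the idempotents of $\scrO_{F_\fp^{(2)}}\otimes_{\scrO_{F_\fp}}\witt(A)$ are available, so $M=M_0\oplus M_1$ with $\itPi$ interchanging $M_0$ and $M_1$ (recall $\itPi^2=\pi$); thus $M$ is determined by the single lattice $M_0$, on which $\mathrm{F}$, $\mathrm{V}$ and $\itPi$ induce an operator of the type $\widetilde{\mathrm{F}}$ considered above. Unwinding the special condition, Drinfeld's key observation is that this datum is equivalent to that of a chain of $\scrO_{F_\fp}$-lattices --- a vertex in the Bruhat--Tits tree of $\PGL_2(F_\fp)$ --- together with a surjection onto an invertible $A$-module, that is an $A$-point of $\BP^1$ subject to a nondegeneracy condition which in the special fibre forbids reduction to an $F_\fp$-rational point. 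This is precisely the functor of the iterated blow-up of $\BP^1_{\scrO_{F_\fp}}$ at the $F_\fp$-rational points of the special fibres that \emph{defines} $\widehat{\CH}$.

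It then follows formally that $\mathrm{SFM}$ is representable by $\widehat{\CH}\hat{\otimes}\widehat{\scrO^{\ur}}$ --- via the tautological special formal module over the blow-up tower --- and, using Deligne's description of $\widehat{\CH}$ (its special fibre is a tree of projective lines over $\bar{k}$ with dual graph the Bruhat--Tits tree of $\PGL_2(F_\fp)$, and its Raynaud generic fibre is $\BP^{1,\an}\setminus\BP^1(F_\fp)=\CH$), that the generic fibre is $\CH\hat{\otimes}\widehat{F_\fp^{\ur}}=\CH_{\widehat{F_\fp^{\ur}}}$. The hard part is the step of the previous paragraph: matching the linear-algebra moduli of ``special lattices with a line'', over an \emph{arbitrary} $\pi$-nilpotent $\widehat{\scrO^{\ur}}$-algebra, with the explicit iterated blow-up defining $\widehat{\CH}$. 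This forces a careful analysis, via Grothendieck--Messing deformation theory and the theory of displays, of the deformations at every point of the special fibre --- including over non-reduced Artinian thickenings --- in order to watch the blow-up centres appear exactly where the ``special line'' degenerates; this is where the real content of \cite{Drin} lies.
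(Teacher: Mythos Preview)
The paper does not prove this statement at all: it is quoted as Drinfeld's theorem, with a bare citation to \cite{Drin} (see the sentence ``We state a part of Drinfeld's theorem \cite{Drin} as follows'' immediately preceding the theorem). Your proposal is a faithful sketch of the argument in \cite{Drin} (as reworked in \cite{BC} and \cite{RZ}), so it is entirely consistent with --- indeed more informative than --- what the paper offers.
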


Let $\CG$ be the universal special formal $\scrO_{B_\fp}$-module
over $\widehat{\CH} \hat{\otimes} \widehat{\scrO^{\ur}}$. There is
an action of $\GL(2,F_\fp)$ on $\CG$ (see \cite[Chapter II
(9.2)]{BC}): The group $\GL(2,F_\fp)$ acts on the functor
$\mathrm{SFM}$ by $g\cdot (\psi; G, \rho)=(\psi\circ
\mathrm{Frob}^{-n}; G, \rho\circ \psi_*(g^{-1}\circ
\mathrm{Frob}^{n}))$ if $v_\fp(\det g)=n$. Here, $v_\fp$ is the
valuation of $\BC_p$ normalized such that $v_\fp(\pi)=1$.

\subsection{The filtered $\varphi_q$-isocrystal attached to the universal special formal
module} \label{ss:univ-fil-phi-mod}

It is rather difficult to describe $\CG$ precisely. \footnote{See
\cite{Teit89} for some information about $\CG$ and \cite{Xie} for a
higher rank analogue.} However, we can determine the associated
(contravariant) filtered $\varphi_q$-isocrystal $\CM$.

In the following, we write $\scrO_{\CH, \widehat{F_\fp^\ur}}$ for
$\scrO_{\CH \hat{\otimes} \widehat{F_\fp^\ur}}$ and $\Omega_{\CH,
\widehat{F_\fp^\ur}}$ for the differential sheaf $\Omega_{\CH
\hat{\otimes} \widehat{F_\fp^{\ur}}}$.

As is observed in \cite{Faltings} and \cite{RZ}, except for the
filtration, the $\varphi_q$-isocrystal $\CM$ is constant. So it is
naturally isomorphic to the $\varphi_q$-isocrystal
$$\widetilde{\mathrm{D}}_0\otimes_{\widehat{F_\fp^\ur}}
\scrO_{\CH, \widehat{F_\fp^\ur}}$$ with the $q$-Frobenius being
$\mathrm{F}^{v_p(q)}\otimes \varphi_{q,\CH_{\widehat{F_\fp^\ur}}}$
and the connection being
$$1\otimes \mathrm{d}:
\widetilde{\mathrm{D}}_0\otimes_{\widehat{F_\fp^{\ur}}}\scrO_{\CH,
\widehat{F_\fp^\ur}}\rightarrow
\widetilde{\mathrm{D}}_0\otimes_{\widehat{F_\fp^{\ur}}}\Omega_{\CH,
\widehat{F_\fp^\ur}}.$$

Next we determine the filtration on $\widetilde{\mathrm{D}}_0
\otimes_{\widehat{F_\fp^\ur}} \scrO_{\CH, \widehat{ F_\fp^\ur }}$.
For any $F_\fp$-algebras $K$ and $ L$, $L\otimes_{\BQ_p}K$ is
isomorphic to $L\otimes_{F_\fp}K \oplus
(L\otimes_{\BQ_p}K)_{\mathrm{non}}$, where
$(L\otimes_{\BQ_p}K)_{\mathrm{non}}$ is the kernel of the
homomorphism $L\otimes_{\BQ_p}K\rightarrow L\otimes_{F_\fp}K$,
$\ell\otimes a\mapsto \ell\otimes a$. If $L$ is a field extension of
$F_\fp$ containing all embeddings of $F_\fp$, then
$L\otimes_{\BQ_p}K=\bigoplus_{\tau: F_\fp\hookrightarrow
L}L\otimes_{\tau, F_\fp} K$ and $(L\otimes_{\BQ_p}K)_{\mathrm{non}}$
corresponds to the non-canonical embeddings. We apply this to
$L=F_\fp$ and $K=\widehat{F^\ur_\fp}$; consider
$\widetilde{\mathrm{D}}_0=B_\fp\otimes_{\BQ_p}\widehat{F^\ur_\fp}$
as an $F_\fp\otimes_{\BQ_p}\widehat{F^\ur_\fp}$-module. Then
$\widetilde{\mathrm{D}}_0$ splits into two parts: one is the
canonical part which corresponds to the natural embedding
$\mathrm{id}: F_\fp\hookrightarrow F_\fp$, and the other is the
non-canonical part. Correspondingly, $ \widetilde{\mathrm{D}}_0
\otimes_{\widehat{F_\fp^\ur}} \scrO_{\CH, \widehat{ F_\fp^\ur }} $
splits into two parts, the canonical part
$B_\fp\otimes_{F_\fp}\scrO_{\CH, \widehat{ F_\fp^\ur }}$ and the
non-canonical part. Since $F_\fp$ acts on the Lie algebra of any
special formal $\scrO_{B_\fp}$-module through the natural embedding,
the filtration on the non-canonical part is trivial.

The filtration on the canonical part is closely related to the
period morphism \cite{Faltings, RZ}. Let us recall the definition of
the period morphism \cite[Section 2.2]{Xie}. We will use the
notations in \cite{Xie}.

Let $M(\Phi)$ be the Cartier module of $\Phi$, a
$\mathbb{Z}/2\mathbb{Z}$-graded module. The
$\mathbb{Z}/2\mathbb{Z}$-grading depends on a choice of
$F_\fp$-embedding of $F_\fp^{(2)}$ into $\widehat{F_\fp^{\ur}}$. We
choose the one, $\tilde{\iota}_0$, that restricts to $\iota_0$, and
denote the other $F_\fp$-embedding by $\tilde{\iota}_{1}$. We fix a
graded $\mathrm{V}$-basis $\{g^0, g^1\}$ of $M(\Phi)$ such that
$\mathrm{V}g^0=\itPi g^0$ and $\mathrm{V} g^1=\itPi g^1$.   Then
$\{g^0, g^1, \mathrm{V} g^0, \mathrm{V} g^1 \}$ is a basis of
$M(\Phi)[1/p]$ over $\widehat{F^\ur_\fp}$; $F_\fp^{(2)}\subset
B_\fp$ acts on $\widehat{F^\ur_\fp} g^0 \oplus \widehat{F^\ur_\fp}
\mathrm{V} g^1$ by $\tilde{\iota}_{0}$, and acts on
$\widehat{F^\ur_\fp} \mathrm{V} g^0 \oplus \widehat{F^\ur_\fp} g^1$
by $\tilde{\iota}_{1}$. See \cite{Drin} for the definition of
Cartier module and the meaning of graded $\mathrm{V}$-basis. See
\cite[(3.55)]{RZ} for the relation between $M(\Phi)$ and the
covariant Dieudonne module attached to $\Phi$. In loc. cit. Cartier
module is called $\tau$-$W_F(L)$-crystal.

Let $R$ be any $\pi$-adically complete
$\widehat{F^\ur_\fp}$-algebra. Drinfeld constructed for each $(\psi;
G, \rho)\in \mathrm{SFM}(R)$ a quadruple $(\eta, T, u, \rho)$. Let
$M=M(G)$ be the Cartier module of $G$, $N(M)$ the auxiliary module
that is the quotient of $M\oplus M$ by the submodule generated by
elements of the form $(\mathrm{V}x, -\itPi x)$ and $\beta_M$ the
quotient map $M\oplus M\rightarrow N(M)$. For $(x_0, x_1)\in M\oplus
M$, we write $((x_0, x_1))$ for $\beta_M(x_0,x_1)$. Then we have a
map $\varphi_M: N(M)\rightarrow N(M)$. See \cite[Definition 4]{Xie}
for its definition.  Put
$$\eta_M:=N(M)^{\varphi_M}, \hskip 10pt T_M:=M/\mathrm{V}M;$$ both $\eta_M$ and $T_M$ are $\mathbb{Z}/2\mathbb{Z}$-graded. Let
$u_M:\eta_M\rightarrow T_M$ be the $\scrO_{F_\fp}[\itPi]$-linear map
of degree $0$
 that is the composition of the inclusion
$\eta_M \hookrightarrow N(M)$ and the map $$ N(M)\rightarrow
M/\mathrm{V}M, \hskip 15pt ((x_0, x_1))\mapsto x_0 \ \mathrm{mod} \
\mathrm{V}M.
$$ Then $\eta_{M(\Phi)}$ is a free $\scrO_{F_\fp}$-module of rank
$4$ with a basis $$\{((g^0, 0)), \ ((g^1,0)) ,((\mathrm{V}g^0, 0)),
((\mathrm{V}g^1, 0))\},$$ where $((g^0, 0))$, $((\mathrm{V}g^1,0))$
are in degree $0$, and $((g^1, 0))$, $((\mathrm{V}g^0,0))$ are in
degree $1$. The quasi-isogeny $\rho: \psi_*\Phi\rightarrow G_{R/\pi
R}$ induces an isomorphism
$$ \rho: \eta^0_{M(\Phi)} \otimes_{\scrO_{F_\fp}}F_\fp \xrightarrow{\sim}
\eta^0_{M(G)} \otimes_{\scrO_{F_\fp}}F_\fp  .$$ Then the period of
$(G, \rho)$ is defined by
\begin{equation}\label{eq:period}
z(G,\rho) = \frac{u'_M\circ \rho((\mathrm{V}g^1,0))}{u'_M\circ
\rho((g^0,0))},
\end{equation}
where $u'_M$ is the the map $\eta^0_{M(G)}\otimes_{\scrO_{F_\fp}}
F_\fp\rightarrow T_M^0\otimes_R R[1/p]$ induced by $u_M$.

Note that considered as a $\varphi_q$-module, $M(\Phi)[1/p]$ is the
dual of $B_\fp\otimes_{F_{\fp}} \widehat{F^\ur_\fp}$, the canonical
part of $\widetilde{\mathrm{D}}_0$. Let $\{v_0, v_1, v_2, v_3\}$ be
the basis of $B_\fp\otimes_{F_{\fp}} \widehat{F^\ur_\fp}$ over
$\widehat{F^\ur_\fp}$ dual to $\{\pi g^1, g^0, \mathrm{V} g^0 ,
\mathrm{V} g^1   \}$. Then
\begin{eqnarray*} \Fil^0
B_\fp\otimes_{F_\fp}\scrO_{\CH, \widehat{ F_\fp^\ur }} &=&
B_\fp\otimes_{F_\fp}\scrO_{\CH, \widehat{ F_\fp^\ur }}
 \\
\Fil^1 B_\fp\otimes_{F_\fp} \scrO_{\CH, \widehat{ F_\fp^\ur }} &=&
\text{the } \scrO_{\CH, \widehat{ F_\fp^\ur }}\text{-submodule
generated by }
\\ && \hskip 20pt \widehat{F^\ur_\fp}\cdot(v_1 +  zv_3) \oplus
\widehat{F^\ur_\fp}\cdot(z v_0 +  v_2)
\\
\Fil^2 B_\fp\otimes_{F_\fp} \scrO_{\CH, \widehat{ F_\fp^\ur }} &=&
0.
\end{eqnarray*}
Here $z$ is the canonical coordinate on $\CH_{\widehat{F_\fp^\ur}}$.

We decompose $B_\fp\otimes_{F_\fp}\widehat{F^\ur_\fp}$ into two
direct summands:
$$ B_\fp\otimes_{F_\fp}\widehat{F^\ur_\fp} = B_\fp\otimes_{F_\fp^{(2)},
\ \tilde{\iota}_{0}}\widehat{F^\ur_\fp} \oplus
B_\fp\otimes_{F_\fp^{(2)}, \ \tilde{\iota}_{1}} \widehat{F^\ur_\fp},
$$ where $B_\fp$ is considered as an $F_\fp^{(2)}$-module by left multiplication.
Let $e_0$ and $e_1$ denote the projection to the first summand and
that to the second, respectively.  We may choose $g^0, g^1$ such
that $v_0=e_0 X , \ v_1= e_1 Y, \ v_2= e_0 Y , \ v_3= e_1 X$. Thus
\begin{eqnarray*} \Fil^0
B_\fp\otimes_{F_\fp}\scrO_{\CH, \widehat{ F_\fp^\ur }} &=&
B_\fp\otimes_{F_\fp}\scrO_{\CH, \widehat{ F_\fp^\ur }}  \\
\Fil^1 B_\fp\otimes_{F_\fp} \scrO_{\CH, \widehat{ F_\fp^\ur }} &=&
\text{the } F_\fp^{(2)}\otimes_{F_\fp}\scrO_{\CH, \widehat{
F_\fp^\ur }}\text{-submodule generated by } zX + Y
\\
\Fil^2 B_\fp\otimes_{F_\fp} \scrO_{\CH, \widehat{ F_\fp^\ur }} &=&
0.
\end{eqnarray*}

Finally we note that the induced action of $\mathrm{GL}(2, F_\fp)$
on $\CH$ is given by $\wvec{a}{b}{c}{d} z = \frac{az+b}{cz+d}$.

\section{Shimura curves} \label{sec:sh-curves}

Fix a real place $\tau_1$ of $F$.  Let $B$ be a quaternion algebra
over $F$ that splits at $\tau_1$ and is ramified at other real
places $\{\tau_2,\cdots, \tau_g\}$ and  $\fp$.

\subsection{Shimura curves $M$, $M'$ and $M''$}
\label{ss:shimura-curves}

We will use three Shimura curves studied by Carayol \cite{Car} and
recall their constructions below (see also \cite{Saito}).

Let $G$ be the reductive algebraic group over $\BQ$ such that
$G(R)=(B\otimes R)^\times$ for any $\BQ$-algebra $R$. Let $Z$ be the
center of $G$; it is isomorphic to $T=\Res_{F/\BQ}\BG_m$.  Let
$\nu:G\rightarrow T$ be the morphism obtained from the reduced norm
$\mathrm{Nrd}_{B/F}$ of $B$. The kernel of $\nu$ is $G^{\der}$, the
derived group of $G$, and thus we have a short exact sequence of
algebraic groups
$$\xymatrix{ 1 \ar[r] & G^\der \ar[r] & G \ar[r]^{\nu} & T\ar[r] & 1.  }$$

Let $X$ be the $G(\BR)$-conjugacy class of the homomorphism
\begin{eqnarray*} h: \hskip 50pt \BC^\times & \rightarrow & G(\BR)
=\GL_2(\BR)\times \BH^\times  \times
\cdots \times \BH^\times \\
z= x+\sqrt{-1}y & \mapsto & \hskip 45pt
\left(\wvec{x}{y}{-y}{x}^{-1}, \  1, \ \ \cdots, \ \ \ 1\right),
\end{eqnarray*} where $\BH$ is the Hamilton quaternion algebra. The conjugacy class $X$ is naturally identified with
the union of upper and lower half planes. Let
$M=M(G,X)=(M_U(G,X))_U$ be the canonical model of the Shimura
variety attached to the Shimura pair $(G,X)$; the canonical model is
defined over $F$, the reflex field of $(G,X)$. There is a natural
right action of $G(\BA_f)$ on $M(G,X)$. Here and in what follows, by
abuse of terminology we call a projective system of varieties simply
a variety.

Take an imaginary quadratic field $E_0=\BQ(\sqrt{-a})$ ($a$ a
square-free positive integer) such that $p$ splits in $E_0$. Put
$E=FE_0$ and $D=B\otimes_FE=B\otimes_{\BQ}E_0$. We fix a square root
$\rho$ of $-a$ in $\BC$. Then the prolonging of $\tau_i$ to $E$ by
$x+ y \sqrt{-a} \mapsto \tau_i(x) + \tau_i(y) \rho$ (resp. $x+ y
\sqrt{-a} \mapsto \tau_i(x) - \tau_i(y) \rho$) is denoted by
$\tau_i$ (resp. $\bar{\tau}_i$).

Let $T_E$ be the torus $\Res_{E/\BQ}\BG_m$, $T_E^1$ the subtorus of
$T_E$ such that $T_E^1(\BQ)=\{z\in E: z\bar{z}=1\}$. We consider the
amalgamate product $G''=G\times_Z T_E$, and the morphism
$G''=G\times_Z T_E\xrightarrow{\nu''}T''=T\times T_E^1$ defined by
$(g,z)\mapsto (\nu(g)z\bar{z},z/\bar{z})$. Consider the subtorus
$T'=\BG_m\times T_E^1$ of $T''$, and let $G'$ be the inverse image
of $T'$ by the map $\nu''$. The restriction of $\nu''$ to $G'$ is
denoted by $\nu'$. Both the derived group of $G'$ and that of $G''$
are identified with $G^\der$, and we have two short exact sequences
of algebraic groups
$$ \xymatrix{ 1 \ar[r] & G^\der \ar[r] & G' \ar[r]^{\nu'} & T'\ar[r] & 1}
$$ and $$\xymatrix{
1 \ar[r] & G^\der \ar[r] & G'' \ar[r]^{\nu''} & T''\ar[r] & 1. }$$

The complex embeddings $\tau_1,\cdots, \tau_g$ of $E$ identify
$G''(\BR)$ with $\GL_2(\BR)\cdot \BC^\times \times \BH^\times \cdot
\BC^\times \times \cdots \times \BH^\times \cdot \BC^\times$. We
consider the $G'(\BR)$-conjugacy class $X'$ (resp.
$G''(\BR)$-conjugacy class $X''$) of the homomorphism
\begin{eqnarray*} h': \hskip 40pt \BC^\times & \rightarrow & G'(\BR)
\subset \\
&& G''(\BR)=\GL_2(\BR)\cdot \BC^\times \times \BH^\times \cdot
\BC^\times \times
\cdots \times \BH^\times \cdot \BC^\times \\
z= x+\sqrt{-1}y & \mapsto & \hskip 50pt
\left(\wvec{x}{y}{-y}{x}^{-1}\otimes 1, \ \ 1\otimes z^{-1}, \ \
\cdots, \ \ 1\otimes z^{-1}\right).
\end{eqnarray*}
Let $M'=M(G',X')$ and $M''=M(G'',X'')$ be the canonical models of
the Shimura varieties defined over their reflex field $E$.  There
are natural right actions of $G'(\BA_f)$ and $G''(\BA_f)$ on $M'$
and $M''$, respectively.

Put $T_{E_0}=\Res^{E_0}_{\BQ}\BG_m$. Using the complex embeddings
$\tau_1,\cdots, \tau_g$ of $E$, we identify $T_E(\BR)$ with
$\BC^\times\times \cdots \times \BC^\times$; similarly via the
embedding $x+y\sqrt{-a}\rightarrow x+y\rho$ we identify
$T_{E_0}(\BR)$ with $\BC^\times$. Consider the homomorphisms
\begin{eqnarray*} h_E &:& \BC^\times \rightarrow T_E(\BR)=\BC^\times
\times \cdots \times \BC^\times,  \ \ z\mapsto (z^{-1},
1,\cdots, 1), \\
h_{E_0} &:& \BC^\times \rightarrow T_{E_0}(\BR)=\BC^\times, \ \
\hskip 55pt z\mapsto z^{-1}.\end{eqnarray*} Let $N_E=M(T_E, h_E)$
and $N_{E_0}=M(T_{E_0}, h_{E_0})$ be the canonical models attached
to the pairs $(T_E,h_E)$ and $(T_{E_0}, h_{E_0})$ respectively. Then
$N_E$ is defined over $E$, and $N_{E_0}$ is defined over $E_0$.

Consider the homomorphism $\alpha: G\times T_E\rightarrow G''$ of
algebraic groups inducing
$$B^\times \times E^\times \rightarrow G''(\BQ)\subset (B\otimes_\BQ E)^\times, \ \ (b, e)\mapsto b
\otimes \mathrm{N}_{E/E_0}(e)e^{-1}$$ on $\BQ$-valued points, and
the homomorphism $\beta:G\times T_E\rightarrow T_{E_0}$ inducing
$$\mathrm{N}_{E/E_0}\circ \pr_2: B^\times \times E^\times\rightarrow
E_0^\times$$ on $\BQ$-valued points. Here, $\mathrm{N}_{E/E_0}$
denotes the norm map $E^\times\rightarrow E_0^\times$. Since
$h'=\alpha\circ (h\times h_E)$ and $h_{E_0}=\mathrm{N}_{E/E_0}\circ
h_E$, $\alpha$ and $\beta$ induce morphisms of Shimura varieties
$M\times N\rightarrow M''$ and $M\times N_E\rightarrow N_{E_0}$
again denoted by $\alpha$ and $\beta$ respectively. We have the
following diagram
$$\xymatrix{ M & M\times N_E \ar[d]^{\beta} \ar[l]_{\pr_1}\ar[r]^{\alpha} & M'' & M'\ar[l] \\
& N_{E_0} . &&  }$$

\subsection{Connected components of $M$, $M\times N_E$, $M'$ and $M''$}

We write $\widetilde{G}$ for $G\times T_E$ and write $\widetilde{M}$
for $M\times N_E$.  For $\natural= \widetilde{} , \emptyset, \ ',
''$, as $B$ is ramified at $\fp$,  there exists a unique maximal
compact open subgroup $U^\natural_{p,0}$ of $G^\natural(\BQ_p)$.
Then $U'_{p,0}=U''_{p,0}\cap G'(\BQ_p)$ and
$U''_{p,0}=\alpha(\widetilde{U}_{p,0})$.

If $U^\natural$ is a subgroup of $G^\natural(\BA_f)$ of the form
$U^\natural_{p,0}U^{\natural,p}$ where $U^{\natural,p}$ is a compact
open subgroup of $G^\natural(\BA_f^p)$, we will write
$M^\natural_{0,U^{\natural,p}}$ for $M^\natural_{U^\natural}$. Let
$M^\natural_0$ denote the projective system $(M^\natural_{0,
U^{\natural,p}})_{U^{\natural,p}}$; this projective system has a
natural right action of $G^\natural(\BA_f^p)$.

\begin{lem} \label{lem:geom-conn}
\begin{enumerate}
\item \label{it:geom-conn} For any sufficiently small
$U^{\natural,p}$, each geometrically connected component of
$M^\natural_{0, U^{\natural,p}}$ is defined over a field that is
unramified at all places above $p$.
\item \label{it:iso-on-geom-conn} Let
$\widetilde{U}^{p}$ be a sufficiently small compact open subgroup of
$\widetilde{G}(\BA_f^p)$. Then the morphism
$$ \widetilde{M}_{0, \widetilde{U}^{p}} \rightarrow M''_{0, \alpha(\widetilde{U}^{p})}
$$
induced by $\alpha$ is an isomorphism onto its image when restricted
to every geometrically connected component.
\end{enumerate}
\end{lem}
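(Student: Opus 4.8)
The plan is to reduce both parts to statements about the connected components of the relevant Shimura varieties over $\C$ together with a descent/reciprocity analysis. For part (\ref{it:geom-conn}), I would first recall the standard description: the set of geometrically connected components of $M^\natural_{0,U^{\natural,p}}$ is, by Deligne's theory of canonical models, a torsor under $\pi_0$ of a torus quotient — concretely, for $G$ it is controlled by $T(\BQ)\backslash T(\BA_f)/\nu(U^\natural)\cdot(\text{connected component at }\infty)$, i.e. by a narrow-ray-class-type group attached to $F$ (and to $E$ for $M'$, $M''$, $\widetilde M$). The Galois action on $\pi_0$ is given by the reciprocity map of the reflex field composed with the norm/transfer into this torus quotient. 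Since we have removed the level at $p$ (working with $U^\natural_{p,0}$, the unique maximal compact at $\fp$, which in particular contains the full $\scrO^\times$ at $p$ since $B$ is ramified there), the conductor of the resulting abelian extension cutting out a connected component is prime to $p$. Hence each geometrically connected component is defined over a finite abelian extension of the reflex field unramified at all places above $p$; shrinking $U^{\natural,p}$ only makes the component smaller and the field larger but keeps it unramified at $p$, because the $p$-part of the level is fixed. I would spell this out uniformly in $\natural$ using the exact sequences $1\to G^{\der,\natural}\to G^\natural\xrightarrow{\nu^\natural} T^\natural\to 1$ from Section \ref{sec:sh-curves}, noting $G^{\der}$ is the same in all four cases and is simply connected, so $\pi_0$ is computed entirely on the torus side.

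For part (\ref{it:iso-on-geom-conn}), the map $\widetilde M_{0,\widetilde U^p}\to M''_{0,\alpha(\widetilde U^p)}$ is induced by $\alpha:\widetilde G=G\times T_E\to G''=G\times_Z T_E$, which is visibly surjective with kernel the antidiagonally embedded copy $Z\hookrightarrow G\times T_E$ (since $G''$ is the amalgam over $Z$). On a fixed geometrically connected component — after base change to $\C$ and passing to the Shimura datum level — both sides become double-coset spaces of the form $G^{\der}(\BQ)\backslash\big(X^+\times G^{\der}(\BA_f)/(\text{level})\big)$, and the map $\alpha$ restricted to a component is the identity on the $G^{\der}$-part because $\alpha$ is an isomorphism on derived groups. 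Thus on each geometrically connected component $\alpha$ induces an isomorphism onto its image: the only thing $\alpha$ collapses is the $T_E$-direction modulo $Z$, which acts purely on the set of connected components, not within a single component. Concretely I would argue: fix a component $C\subset \widetilde M_{0,\widetilde U^p}$, lift to the tower, identify $C(\C)$ with $\Gamma\backslash X^+$ for $\Gamma$ an arithmetic subgroup of $G^{\der}$, do the same for its image $C'\subset M''_{0,\alpha(\widetilde U^p)}$, and observe the two arithmetic groups coincide (for $\widetilde U^p$ small enough that the relevant stabilizers/torsion obstructions vanish — this is exactly where "sufficiently small'' is used), so the induced holomorphic map $C\to C'$ is an isomorphism of the associated varieties; then descend this from $\C$ using that both varieties and the map are defined over a common number field.

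The main obstacle I expect is the bookkeeping in part (\ref{it:iso-on-geom-conn}) to make precise what "restricted to every geometrically connected component'' means at the level of the projective systems (recall the paper's convention that a variety means a projective system), and to verify that "sufficiently small $\widetilde U^p$'' really does kill all the finite-group obstructions — i.e. that for small enough level the stabilizer of a point in $X^+$ inside the relevant arithmetic group is trivial, so that $\Gamma\backslash X^+$ is genuinely the quotient and the comparison of arithmetic subgroups on the two sides is an honest equality rather than an equality up to finite index. The connected-components computation in part (\ref{it:geom-conn}) is more routine once the reciprocity law for canonical models is invoked; the only subtlety there is to track the reflex fields ($F$ for $M$, $E$ for $M'$, $M''$, $\widetilde M$, $E_0$ for $N_{E_0}$) and confirm $p$ is unramified in each — which holds since $p$ is inertia in $F$ and splits in $E_0$, hence is unramified in $E=FE_0$, and the extra abelian extension introduced by the components has conductor prime to $p$.
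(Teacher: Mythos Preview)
Your proposal is correct and follows essentially the same approach as the paper: for part~(\ref{it:geom-conn}) the paper simply invokes Deligne's explicit description of the Galois action on $\pi_0$ \cite[Theorem 2.6.3]{Del}, which is exactly the reciprocity argument you outline, and for part~(\ref{it:iso-on-geom-conn}) the paper observes that $\alpha$ is an isomorphism on derived groups and cites \cite[Remark 2.1.16]{Del} or \cite[Proposition II.2.7]{Mil}, which package the $\Gamma\backslash X^+$ comparison you spell out. Your write-up is more detailed than the paper's (which is essentially two sentences of citations), but the underlying mechanism is identical.
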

\begin{proof}
When $U^{\natural,p}$ is sufficiently small, $M^\natural_{0,
U^{\natural,p}}$ is smooth.  Let $\pi_0(M^\natural_{0,
U^{\natural,p}})$ denote the group of geometrically irreducible
components of $ M^\natural_{0, U^{\natural,p}}$ over
$\overline{\BQ}$ (which must be connected since $M^\natural_{0,
U^{\natural,p}}$ is smooth). Then $\Gal(\overline{\BQ}/E)$ acts on
$\pi_0(M^\natural_{0, U^{\natural,p}})$. This action is explicitly
described by Deligne \cite[Theorem 2.6.3]{Del}, from which we deduce
(\ref{it:geom-conn}).

As $\alpha$ induces an isomorphism from the derived group of
$\widetilde{G}$ to that of $G''$, by \cite[Remark 2.1.16]{Del} or
\cite[Proposition II.2.7]{Mil} we obtain
(\ref{it:iso-on-geom-conn}).
\end{proof}

\subsection{Modular interpolation of $M'$}

Let $\ell\mapsto \bar{\ell}$ be the involution on
$D=B\otimes_{\BQ}E_0$ that is the product of the canonical
involution on $B$ and the complex conjugate on $E_0$. Choose an
invertible symmetric element $\delta\in D$ ($\delta=\bar{\delta}$).
Then we have another involution $\ell\mapsto
\ell^*:=\delta^{-1}\bar{\ell}\delta$ on $D$.

Let $V$ denote $D$ considered as a left $D$-module. Let $\psi$ be
the non-degenerate alternating form on $V$ defined by $ \psi(x,y)
=\Tr_{E/\BQ} (\sqrt{-a} \ \Trd_{D/E}(x \delta y^*)) $, where
$\Tr_{E/\BQ}$ is the trace map and $\Trd_{D/E}$ is the reduced trace
map. For $\ell\in D$ put
$$t(\ell)=\tr(\ell; V_\BC/\Fil^0 V_\BC)$$ where $\Fil^\bullet$ is
the Hodge structure defined by $h'$. We have
$$ t(\ell)=(\tau_1+\bar{\tau}_1 +2\tau_2+\cdots+2\tau_g)(\tr_{D/E}(\ell)) $$
for $\ell\in D$.  The subfield of $\BC$ generated by $t(\ell)$,
$\ell\in D$, is exactly $E$.

Choose an order $\scrO_D$ of $D$, $T$ the corresponding lattice in
$V$. With a suitable choice of $\delta$, we may assume that
$\scrO_D$ is stable by the involution $\ell\mapsto \ell^*$ and that
$\psi$ takes integer values on $T$. Put
$\hat{\scrO}_D:=\scrO_D\otimes \hat{\BZ}$ and $\hat{T}:=T\otimes
\hat{\BZ}$.

In Section \ref{sec:p-unif} when we consider the $p$-adic
uniformization of the Shimura curves, we need to make the following
assumption.

\begin{ass} \label{ass} We assume that  $\delta$ is chosen such that $\hat{T}$ is
stable by $U'_{p,0}$.
\end{ass}


If $U'$ is a sufficiently small compact open subgroup of $G'(\BA_f)$
keeping $\hat{T}$, then $M'_{U'}$ represents the following functor
$\CM_{U'}$ \cite[Section 5]{Saito}:

For any $E$-algebra $R$, $\CM_{U'}(R)$ is the set of isomorphism
classes of quadruples $(A, \iota, \theta, \kappa)$ where

$\bullet$ $A$ is an isomorphism class of abelian schemes over $R$
with an endomorphism $\iota:\scrO_D\rightarrow \End(A)$ such that
$\tr(\iota(\ell), \Lie A)=t(\ell)$ for all $\ell\in \scrO_D$.

$\bullet$ $\theta$ is a polarization $A\rightarrow \check{A}$ whose
associated Rosati involution sends $\iota(\ell)$ to $\iota(\ell^*)$.

$\bullet$ $\kappa$ is a $U'$-orbit of $\scrO_D\otimes
\hat{\BZ}$-linear isomorphisms $\hat{T}(A):=\prod\limits_\ell
T_\ell(A) \rightarrow \hat{T}$ such that there exists a
$\hat{\BZ}$-linear isomorphism $\kappa': \hat{T}(1)\rightarrow
\hat{\BZ}$ making the diagram
$$ \xymatrix{ \hat{T}(A) \times \hat{T}(A) \ar[r]^{(1,\theta_*)} \ar[d]^{\kappa\times \kappa } &
\hat{T}(A)\times \hat{T}(\check{A}) \ar[r] & \hat{\BZ}(1)\ar[d]^{\kappa'} \\
\hat{T}\times \hat{T} \ar[rr]^{\psi\otimes\: \hat{\BZ}} && \hat{\BZ}
 } $$
commutative.

Let $\CA_{U'}$ be the universal $\scrO_D$-abelian scheme over
$M'_{U'}$.

\section{$p$-adic Uniformizations of Shimura curves}
\label{sec:p-unif}

\subsection{Preliminaries}

We provide two simple facts, which will be useful later.

(i) Let $X$ be a scheme with a discrete action of a group $C$ on the
right hand side, and let $Z$ be a group that contains $C$ as a
normal subgroup of finite index. Fix a set of representatives
$\{g_i\}_{i\in C \backslash Z}$ of $C\backslash Z$ in $Z$. We define
a scheme $X*_CZ$ with a right action of $Z$ below. As a scheme
$X*_CZ$ is $\bigsqcup_{C\backslash Z} X^{(g_i)}$, where $X^{(g_i)}$
is a copy of $X$. For any $g\in Z$ and $x^{(g_i)}\in X^{(g_i)}$, if
$g_ig=h g_k$ with $h\in C$, then $x^{(g_i)}\cdot g = (x\cdot
h)^{(g_k)}$. It is easy to show that up to isomorphism $X*_CZ$ and
the right action of $Z$ are independent of the choice of $\{ g_i
\}_{i\in C\backslash Z}$.

(ii) Let $X_1$ and $X_2$ be two schemes whose connected components
are all geometrically connected. Suppose that each of $X_1$ and
$X_2$ has an action of an abelian group $Z$; $Z$ acts freely on the
set of components of $X_1$ (resp. $X_2$).  Let $C$ be a closed
subgroup of $Z$. Then the $Z$-actions on $X_1$ and $X_2$ induce
$Z/C$-actions on $X_1/C$ and $X_2/C$.

\begin{lem} \label{lem:prelim}
If there exists a $Z/C$-equivariant isomorphism $\gamma: X_1/C
\rightarrow X_2/C$, then there exists a $Z$-equivariant isomorphism
$\tilde{\gamma}: X_1\rightarrow X_2$ such that the following diagram
\[ \xymatrix{ X_1 \ar[r]^{\tilde{\gamma}} \ar[d]^{\pi_1} & X_2 \ar[d]^{\pi_2} \\  X_1/C \ar[r] ^{\gamma} & X_2/C } \]
is commutative, where $\pi_1$ and $\pi_2$ are the natural
projections.
\end{lem}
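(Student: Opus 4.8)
The plan is to lift the isomorphism $\gamma$ component by component, using the fact that $C$ acts freely on the components of $X_1$ and $X_2$. First I would fix a set of representatives $\{c_j\}_{j \in C}$ (or rather, think of $C$ as a group acting and choose orbit representatives) for the $C$-action on the connected components of $X_1$. Since $C$ acts freely on $\pi_0(X_1)$, each $C$-orbit of components of $X_1$ is a disjoint union $\bigsqcup_{c \in C} W^{(c)}$ of copies of a single component $W$, and the quotient by $C$ identifies this orbit with a single component $\bar{W} = W/C \cdot W$ of $X_1/C$ — more precisely, the projection $\pi_1$ restricts to an isomorphism $W \xrightarrow{\sim} \pi_1(W)$ onto a component of $X_1/C$ (because $C$ acts freely on components, the stabilizer of $W$ in $C$ is trivial, so no identifications happen within $W$). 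The same holds for $X_2$.

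The construction of $\tilde\gamma$ then goes as follows. Pick a component $W$ of $X_1$; let $\bar W = \pi_1(W)$ be the corresponding component of $X_1/C$. Then $\gamma(\bar W)$ is a component of $X_2/C$, and by the freeness of the $C$-action on $\pi_0(X_2)$ there is a unique component $W'$ of $X_2$ with $\pi_2|_{W'}: W' \xrightarrow{\sim} \gamma(\bar W)$; here I must choose $W'$, which amounts to choosing one lift among the $C$-translates — this choice is the one place where I genuinely make a selection. Having made it, I define $\tilde\gamma|_W := (\pi_2|_{W'})^{-1} \circ \gamma \circ (\pi_1|_W)$, an isomorphism $W \xrightarrow{\sim} W'$ lifting $\gamma$ on this component. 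Now extend to the whole $C$-orbit of $W$ by $Z$-equivariance — wait, by $C$-equivariance: for $c \in C$ set $\tilde\gamma|_{W^{(c)}} := c \circ \tilde\gamma|_W \circ c^{-1}$ (transporting via the $C$-actions on source and target). Since $C$ is abelian (indeed $Z$ is abelian, by hypothesis (ii)) this is well-defined and independent of how we write elements of the orbit. Doing this for each $C$-orbit of components of $X_1$ — using that $\gamma$, being an isomorphism, permutes the components of the quotients and hence matches up $C$-orbits upstairs — assembles a $C$-equivariant isomorphism $\tilde\gamma: X_1 \to X_2$ with $\pi_2 \circ \tilde\gamma = \gamma \circ \pi_1$.

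It remains to check that $\tilde\gamma$ is not merely $C$-equivariant but $Z$-equivariant. This is where the hypotheses that $Z$ is abelian and that $\gamma$ is $Z/C$-equivariant are used together. Given $z \in Z$, both $\tilde\gamma \circ z$ and $z \circ \tilde\gamma$ are isomorphisms $X_1 \to X_2$ lifting $\gamma \circ (z \bmod C) = (z \bmod C) \circ \gamma$ (using $Z/C$-equivariance of $\gamma$ and commutativity of $\pi_i$ with the actions). So $(z\circ\tilde\gamma)^{-1}\circ(\tilde\gamma\circ z)$ is an automorphism of $X_1$ lying over the identity of $X_1/C$, i.e. it is given by the action of some element of $C$ on each component; I need this element to be trivial. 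On the distinguished component $W$ and its image, one computes directly from the definition $\tilde\gamma|_W = (\pi_2|_{W'})^{-1}\circ\gamma\circ(\pi_1|_W)$ that the two maps agree (both are determined by $\gamma$ and the canonical section-inverses $\pi_i^{-1}$, and the $C$-ambiguity in choosing $W'$ for the translate $W^{(z\bmod C)}$ is exactly absorbed because $\tilde\gamma$ was built $C$-equivariantly from $\tilde\gamma|_W$); by $C$-equivariance of $\tilde\gamma$ and the abelian-ness of $Z$ the agreement then propagates to every component. The main obstacle is precisely this last bookkeeping step — ensuring that the arbitrary choices of lifts $W'$ made orbit-by-orbit do not obstruct full $Z$-equivariance — and the key leverage is that $Z$ is abelian, so that the $C$-action used to spread $\tilde\gamma$ over an orbit commutes with the $Z$-action being tested.
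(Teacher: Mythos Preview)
There is a genuine gap in your final step. You construct $\tilde\gamma$ by making an \emph{independent} choice of lift $W'$ for each $C$-orbit of components of $X_1$, and then you attempt to verify $Z$-equivariance after the fact. But this fails: if $z\in Z\setminus C$ and $W$ is a chosen representative, then $z\cdot W$ lies in a different $C$-orbit with its own independently chosen representative $W_z$ and lift $W_z'$. Writing $z\cdot W=c\cdot W_z$ for some $c\in C$, your $C$-equivariant extension forces $\tilde\gamma(z\cdot W)=c\cdot W_z'$, whereas $Z$-equivariance would require $\tilde\gamma(z\cdot W)=z\cdot W'$. These two components of $X_2$ sit over the same component of $X_2/C$ but differ by an element of $C$ that has no reason to be trivial; your assertion that ``the $C$-ambiguity \ldots\ is exactly absorbed'' does not hold, because the ambiguity in choosing $W_z'$ was resolved without reference to the earlier choice of $W'$. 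Commutativity of $Z$ does not rescue this.

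The paper's proof avoids the problem by making the choices at the level of $Z$-orbits rather than $C$-orbits. Concretely: identify $X_1/C\cong X_2/C$ via $\gamma$, call it $Y$; pick representatives $\{Y_i\}$ for the $Z/C$-orbits of components of $Y$; for each $i$ choose a single lift $\tilde Y_i^{(1)}$ in $X_1$ and $\tilde Y_i^{(2)}$ in $X_2$; then define $\tilde\gamma$ on $\tilde Y_i^{(1)}$ via $\pi_1,\pi_2$ and extend to all of $X_1$ by the rule $\tilde\gamma|_{g\cdot\tilde Y_i^{(1)}}=g\circ\tilde\gamma_i\circ g^{-1}$ for $g\in Z$. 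Freeness of the $Z$-action on components guarantees this is well-defined, and $Z$-equivariance is then automatic rather than something to be checked. Your approach can be repaired along exactly these lines: choose one lift per $Z$-orbit, not one per $C$-orbit.
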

\begin{proof}
We identify $X_1/C$ with $X_2/C$ by $\gamma$, and write $Y$ for it.
The condition on $Z$-actions implies that the action of $Z/C$ on the
set of connected components of $Y$ is free and that the morphism
$\pi_1$ (resp. $\pi_2$) maps each connected component of $X_1$
(resp. $X_2$) isomorphically to its image.

We choose a set of representatives $\{Y_i\}_{i\in I}$ of the
$Z/C$-orbits of components of $Y$. Then $\{\bar{g}Y_i:  \bar{g} \in
 Z/C, i\in I\} $ are all different connected components of $Y$. For
each $i\in I$ we choose a connected component $\tilde{Y}_i^{(1)}$
(resp. $\tilde{Y}_i^{(2)}$) of $X_1$ (resp. $X_2$) that is a lifting
of $Y_i$. Then $\{g \tilde{Y}_i^{(1)}: g \in Z, i\in I \}$ (resp. $
\{ g \tilde{Y}_i^{(2)}: g \in Z, i\in I \} $) are all different
connected components of $X_1$ (resp. $X_2$).

As $ \pi_1|_{\tilde{Y_i}^{(1)}}: \tilde{Y_i}^{(1)}\rightarrow Y_i $
and $ \pi_2|_{\tilde{Y_i}^{(2)}}: \tilde{Y_i}^{(2)}\rightarrow Y_i $
are isomorphisms, there exists an isomorphism $ \tilde{\gamma}_i:
\tilde{Y_i}^{(1)}\rightarrow \tilde{Y_i}^{(2)} $ such that $
\pi_1|_{\tilde{Y_i}^{(1)}} = \pi_2|_{\tilde{Y_i}^{(2)}} \circ
\tilde{\gamma}_i $. We define the morphism $\tilde{\gamma}:
X^{(1)}\rightarrow X^{(2)}$ as follows: $ \tilde{\gamma} $ maps $ g
\tilde{Y}_i^{(1)} $ to $ g \tilde{Y}_i^{(2)} $, and $
\tilde{\gamma}|_{g \tilde{Y}_i^{(1)} }= g\circ \tilde{\gamma}_i
\circ g^{-1} $. Then $\tilde{\gamma}$ is a $Z$-equivariant
isomorphism and $ \pi_1 = \pi_2\circ\tilde{\gamma}$.
\end{proof}

\subsection{Some Notations} \label{ss:notations}
Fix an isomorphism $\BC\cong \BC_p$. 
Combining the isomorphism $\BC\cong \BC_p$ and
the inclusion $E_0\hookrightarrow \BC$, $x+y\sqrt{-a}\mapsto
x+y\rho$, we obtain an inclusion $E_0\hookrightarrow \BQ_p$ and
$E\hookrightarrow F_\fp$. Thus $D\otimes \BQ_p$ is isomorphic to
$B_\fp\oplus B_\fp$.

Note that $G(\BQ_p)$ is isomorphic to $B_\fp^\times$, $G'(\BQ_p)$ is
isomorphic to the subgroup $$\{ (a,b): a,b \in B_\fp^\times,
\bar{a}b\in \BQ_p^\times \}$$ of $B_\fp^\times\times B_\fp^\times$,
and $G''(\BQ_p)$ is isomorphic to $$\{ (a,b): a,b \in B_\fp^\times,
\bar{a}b\in F_\fp^\times \},$$ where $a\mapsto \bar{a}$ is the
canonical involution on $B$. Note that $T_{E}(\BQ_p)$ is isomorphic
to $F_\fp^\times\times F_\fp^\times$, and $T_{E_0}(\BQ_p)$ is
isomorphic to $\BQ_p^\times \times \BQ_p^\times$. We normalize these
isomorphisms such that $G'(\BQ_p)\hookrightarrow G''(\BQ_p)$ becomes
the natural inclusion
$$ \{ (a,b): a,b \in B_\fp^\times,
\bar{a}b\in \BQ_p^\times \} \hookrightarrow \{ (a,b): a,b \in
B_\fp^\times, \bar{a}b\in F_\fp^\times \} ,$$ $\alpha:
G(\BQ_p)\times T_E(\BQ_p)\rightarrow G''(\BQ_p)$ becomes
\begin{eqnarray*} B_\fp^\times \times (F_\fp^\times \times
F_\fp^\times) &\rightarrow & \{ (a,b): a,b \in B_\fp^\times,
\bar{a}b\in F_\fp^\times \} \\ (a, (t_1, t_2)) &\mapsto&
(a\frac{\mathrm{N}_{F_\fp/\BQ_p}(t_1)}{t_1}, a
\frac{\mathrm{N}_{F_\fp/\BQ_p}(t_2)}{t_2}) ,
\end{eqnarray*} and $\beta: G(\BQ_p)\times T_E(\BQ_p)\rightarrow
T_{E_0}(\BQ_p)$ becomes \begin{eqnarray*} B_\fp^\times \times
(F_\fp^\times \times F_\fp^\times) & \rightarrow & \BQ_p^\times
\times \BQ_p^\times \\ (a, (t_1, t_2)) &\mapsto &
(\mathrm{N}_{F_\fp/\BQ_p}(t_1), \mathrm{N}_{F_\fp/\BQ_p}(t_2)) .
\end{eqnarray*}

Let $\bar{B}$ be the quaternion algebra over $F$ such that
$$ \mathrm{inv}_v(\bar{B}) = \left\{ \begin{array}{ll} \mathrm{inv}_v(B)
& \text{ if }v \neq \tau_1, \fp , \\ \frac{1}{2} & \text{ if } v=\tau_1, \\
0 & \text{ if }v=\fp . \end{array}\right. $$ With $\bar{B}$ instead
of $B$ we can define analogues of $G$, $G'$ and $G''$, denoted by
$\bar{G}$, $\bar{G}'$ and $\bar{G}''$ respectively. For $\natural=
\emptyset, ',''$ we have $ \bar{G}^\natural (\BA_f^p) = G^\natural
(\BA_f^p) $; $\bar{G}(\BQ_p)$ is isomorphic to $\GL(2,F_\fp)$;
$\bar{G}'(\BQ_p)$ is isomorphic to the subgroup
$$\{ (\wvec{a_1}{b_1}{c_1}{d_1},\wvec{a_2}{b_2}{c_2}{d_2}):  a_i,b_i, c_i, d_i \in  F_\fp ,
\ \wvec{d_1}{-b_1}{-c_1}{a_1} \wvec{a_2}{b_2}{c_2}{d_2}\in
\BQ_p^\times \}$$ of $\GL(2,F_\fp)\times \GL(2,F_\fp)$, and
$\bar{G}''(\BQ_p)$ is isomorphic to  $$\{
\wvec{a_1}{b_1}{c_1}{d_1},\wvec{a_2}{b_2}{c_2}{d_2}):  a_i,b_i, c_i,
d_i \in  F_\fp , \ \wvec{d_1}{-b_1}{-c_1}{a_1}
\wvec{a_2}{b_2}{c_2}{d_2} \in F_\fp^\times \}.$$

If $\natural=\emptyset$, let $\bar{G}(\BQ_p)$ act on
$\CH_{\widehat{F^\ur_\fp}}$ as in Section \ref{sec:univ-spec-mod}.
If $\natural=\widetilde{}$, let $\widetilde{\bar{G}}=\bar{G}\times
T_E$ act on $\CH_{\widehat{F^\ur_\fp}}$ by the projection to the
first factor. If $\natural='$ or $''$, let $\bar{G}^\natural(\BQ_p)$
act on $\CH_{\widehat{F^\ur_\fp}}$ by the first factor. Let
$\bar{G}^\natural(\BQ)$ act on $\CH_{\widehat{F^\ur_\fp}}$ via its
embedding into $\bar{G}^\natural(\BQ_p)$.

The center of $\bar{G}^\natural$, $Z(\bar{G}^\natural)$, is
naturally isomorphic to the center of $G^\natural$, $Z(G^\natural)$;
we denote both of them by $Z^\natural$.

\subsection{The $p$-adic uniformizations} \label{ss:p-adic-unif}

Let $\natural$ be either $\widetilde{}, \ '$ or $''$. For any
compact open subgroup $U^{\natural, p}$ of $G^\natural(\BA_f^p)$,
let $X^\natural_{U^{\natural,p}}$ denote $M^\natural_{0,
U^{\natural,p}}\times_{\Spec(F_\fp)} \Spec(\widehat{F_\fp^\ur})$.

\begin{prop}\label{prop:p-adic-unif} Suppose that Assumption
\ref{ass} holds.
\begin{enumerate}
\item\label{it:p-adic-unif-a} Assume that $\natural=\widetilde{} \ , '$ or $''$.
For any sufficiently small compact open subgroup $U^{\natural,p}$ of
$G^\natural(\BA_f^p)$, writing $U^\natural=U^\natural_{p,0}
U^{\natural p}$, we have a $Z^\natural(\BQ)\backslash
Z^\natural(\BA_f)/ (Z^\natural(\BA_f) \cap U^\natural) $-equivariant
isomorphism \begin{equation} \label{eq:linshi}
X^\natural_{U^{\natural, p}} \cong \bar{G}^\natural(\BQ) \backslash
(\CH_{\widehat{F^\ur_\fp}}\times G^\natural(\BA_f)/U^\natural).
\end{equation}
Here, $\bar{G}^\natural(\BQ)$ acts on $\CH_{\widehat{F^\ur_\fp}}$ as
mentioned above and acts on $G^\natural(\BA_f^p)/U^{\natural, p}$ by
the embedding $\bar{G}^\natural(\BQ)\hookrightarrow
\bar{G}^\natural(\BA_f^p)\xrightarrow{\simeq} G^\natural(\BA_f^p)$;
in the case of $\natural='$ or $''$, if $g\in \bar{G}^\natural(\BQ)$
satisfies $g_p=(a, b)$ with $a, b\in \GL(2,F_\fp)$, then $g$ acts on
$G^\natural(\BQ_p)/U^\natural_{p,0}$ via the left multiplication by
$(\itPi^{v_{\fp}(\det a)}, \itPi^{v_\fp(\det b)})$; while, in the
case of $\natural=\widetilde{}$, $\widetilde{g} = (g , t) \in
\widetilde{\bar{G}}(\BQ)$ $(g\in G(\BQ), t\in T_E(\BQ))$ acts on
$\widetilde{G}(\BQ_p)/\widetilde{U}_{p,0}$ via the left
multiplication by $(\itPi^{v_\fp(\det g_p)}, t_p)$; the group
$Z^\natural(\BQ)\backslash Z^\natural(\BA_f)/ (Z^\natural(\BA_f)
\cap U^\natural)$  acts on the right hand side of
$($\ref{eq:linshi}$)$ by right multiplications on
$\bar{G}^\natural(\BA_f)$.
\item \label{it:p-adic-unif-b} The isomorphisms in
$($\ref{it:p-adic-unif-a}$)$ can be chosen such that, for either
$\sharp= \widetilde{}$ and $\natural=''$, or $\sharp='$ and
$\natural=''$, we have a commutative diagram
\[\xymatrix{ X^\sharp_{U^{\sharp, p}} \ar[r]\ar[d] &  \bar{G}^\sharp(\BQ) \backslash
(\CH_{\widehat{F^\ur_\fp}}\times G^\sharp(\BA_f)/U^\sharp ) \ar[d] \\
X^\natural_{U^{\natural, p}} \ar[r]  &  \bar{G}^\natural(\BQ)
\backslash (\CH_{\widehat{F^\ur_\fp}}\times
G^\natural(\BA_f)/U^\natural)  }\] compatible with the {
$Z^\sharp(\BQ)\backslash Z^\sharp(\BA_f)/ (Z^\sharp(\BA_f) \cap
U^\sharp)$}-actions on the upper and the \\ {
$Z^\natural(\BQ)\backslash Z^\natural(\BA_f)/(Z^\natural(\BA_f) \cap
U^\natural)$}-actions on the lower, where the left vertical arrow is
induced from the morphism $M^\sharp\rightarrow M^\natural$, and the
right vertical arrow is induced by the identity morphism
$\CH_{\widehat{F_\fp^\ur}} \rightarrow \CH_{\widehat {F_\fp^\ur } }$
and the homomorphism $\alpha: \widetilde{G}=G \times T_E\rightarrow
G''$ or the inclusion $G'\hookrightarrow G''$. Here, in the case of
$\sharp = \widetilde{}$ and $\natural=''$,
$U^\natural=\alpha(U^\sharp)$; in the case of $\sharp='$ and
$\natural=''$, $U^\sharp= U^\natural \cap G'(\BA_f)$.
\end{enumerate}
\end{prop}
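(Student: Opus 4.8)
The plan is to deduce Proposition \ref{prop:p-adic-unif} from the Rapoport--Zink $p$-adic uniformization theorem applied to the Shimura curve $M''$ (via the modular description of $M'$ recalled in Section \ref{sec:sh-curves}), together with the two formal lemmas of Section \ref{sec:p-unif}. The strategy is: first establish the uniformization in the single case $\natural=''$, where Assumption \ref{ass} is exactly what allows us to invoke the Drinfeld/Rapoport--Zink theorem; then transfer it to $\natural='$ and $\natural=\widetilde{}$ by descent along the morphisms $M'\hookrightarrow M''$ and $\alpha:\widetilde{M}\to M''$, using that $\alpha$ is an isomorphism on geometrically connected components (Lemma \ref{lem:geom-conn}(\ref{it:iso-on-geom-conn})).

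For part (\ref{it:p-adic-unif-a}) with $\natural=''$: the moduli problem $\CM_{U'}$ describes $M'_{U'}$ as parametrizing $\scrO_D$-abelian schemes with the polarization, level structure, and the trace condition $t(\ell)$. Base-changing to $\widehat{F_\fp^\ur}$ and using $D\otimes\BQ_p\cong B_\fp\oplus B_\fp$, the $p$-divisible group of such an abelian scheme decomposes; the trace condition $t(\ell)=(\tau_1+\bar\tau_1+2\tau_2+\cdots+2\tau_g)(\tr_{D/E}(\ell))$ forces one factor to be a special formal $\scrO_{B_\fp}$-module (the $\tau_1$-part) while the other factors are ``too ramified'' to deform, so that the deformation functor at a mod-$p$ point is governed by $\mathrm{SFM}$. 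This is precisely the content of \cite[Chapter 6]{RZ} / Drinfeld's theorem, and yields the uniformization of $M'_{U'}\times\Spec\widehat{F_\fp^\ur}$ by $\widehat{\CH}\hat\otimes\widehat{\scrO^\ur}$, i.e. an isomorphism $X'_{U'^p}\cong \bar G'(\BQ)\backslash(\CH_{\widehat{F_\fp^\ur}}\times G'(\BA_f)/U')$; passing from $G'$ to $G''$ (which has the same derived group) and invoking \cite[Remark 2.1.16]{Del} or \cite[Proposition II.2.7]{Mil} on connected components gives the statement for $\natural=''$, with the $Z''$-equivariance coming from the action on level structures at $p$ and the twist by $(\itPi^{v_\fp(\det a)},\itPi^{v_\fp(\det b)})$ recording the height of the quasi-isogeny (the $\GL(2,F_\fp)$-action on $\CG$ recalled after Drinfeld's theorem). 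The cases $\natural='$ and $\natural=\widetilde{}$ then follow: for $\natural='$ one intersects with $G'(\BA_f)$ and uses that $U'_{p,0}=U''_{p,0}\cap G'(\BQ_p)$; for $\natural=\widetilde{}$ one applies Lemma \ref{lem:geom-conn}(\ref{it:iso-on-geom-conn}) so that $\widetilde M_{0,\widetilde U^p}\to M''_{0,\alpha(\widetilde U^p)}$ is an isomorphism onto its image on each component, then transports the uniformization along it, keeping track of the twist $(\itPi^{v_\fp(\det g_p)},t_p)$ dictated by the explicit formula for $\alpha$ on $\BQ_p$-points given in Section \ref{ss:notations}.

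For part (\ref{it:p-adic-unif-b}): the point is that the isomorphisms in (\ref{it:p-adic-unif-a}) for $\sharp$ and $\natural$ are \emph{a priori} only defined up to the ambiguity encoded in the formal Lemma \ref{lem:prelim}; the claim is that we can choose them compatibly so that the square with the map $M^\sharp\to M^\natural$ on the left and the map induced by $\alpha$ (resp. $G'\hookrightarrow G''$) on the right commutes. Here I would apply Lemma \ref{lem:prelim} with $X_1$ the Shimura-curve side, $X_2$ the uniformization side, $Z=Z^\natural(\BQ)\backslash Z^\natural(\BA_f)$, $C$ the relevant subgroup, and the given $Z/C$-equivariant isomorphism coming from (\ref{it:p-adic-unif-a}) at a deeper level: lifting the isomorphism through the central quotient produces a genuinely $Z^\natural$-equivariant isomorphism fitting into a commutative square with $\pi_1,\pi_2$, and naturality of the Rapoport--Zink construction in $M'\hookrightarrow M''$ and in $\alpha$ (both are maps of moduli problems / Shimura data inducing the identity on derived groups, hence the identity on $\CH$) gives the compatibility. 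The bookkeeping of which group acts by which twist at $p$ — matching $\alpha(\widetilde U_{p,0})=U''_{p,0}$ and $U'_{p,0}=U''_{p,0}\cap G'(\BQ_p)$ — is what makes the diagram commute.

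The main obstacle I expect is not any single deep theorem but the careful normalization of all the $p$-adic identifications so that every twist is exactly right: the isomorphisms $G^\natural(\BQ_p)\cong$ (subgroup of $B_\fp^\times\times B_\fp^\times$), the action of $\GL(2,F_\fp)=(\End^0_{\scrO_{B_\fp}}\Phi)^\times$ on $\CG$ via $g\cdot(\psi;G,\rho)=(\psi\circ\Frob^{-n};G,\rho\circ\psi_*(g^{-1}\circ\Frob^n))$ for $v_\fp(\det g)=n$, and the resulting left multiplication by $\itPi^{v_\fp(\det\cdot)}$ on $G^\natural(\BQ_p)/U^\natural_{p,0}$. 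Getting the height-zero normalization of the quasi-isogeny in $\mathrm{SFM}$ to interact correctly with the $Z^\natural$-action, and checking that the explicit $\alpha,\beta$ on $\BQ_p$-points in Section \ref{ss:notations} produce precisely the stated twists $(\itPi^{v_\fp(\det g_p)},t_p)$, is the delicate part; everything else is either a direct citation of \cite{RZ}, \cite{Drin}, \cite{Del}, \cite{Mil} or an application of the two elementary lemmas of Section \ref{sec:p-unif}.
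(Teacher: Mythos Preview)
Your approach is essentially the same as the paper's, with one organizational difference worth noting. You start from the Rapoport--Zink uniformization of $M'$ (\cite[Theorem 6.50]{RZ}) and then propagate to $\natural=''$ and $\natural=\widetilde{}$; so does the paper. However, for the passage $'\to''$ you invoke \cite[Remark 2.1.16]{Del} / \cite[Proposition II.2.7]{Mil} on connected components, whereas the paper uses the explicit induction construction $X*_CZ$ (preliminary fact (i) in Section \ref{sec:p-unif}): with $C=Z'(\BQ)\backslash Z'(\BA_f)/(Z'(\BA_f)\cap U')$ and $Z=Z''(\BQ)\backslash Z''(\BA_f)/(Z''(\BA_f)\cap U'')$, both $X''_{U''^p}$ and the uniformization side are identified with the corresponding $'$-objects induced up via $*_CZ$, which gives (\ref{it:p-adic-unif-a}) for $\natural=''$ and (\ref{it:p-adic-unif-b}) for $\sharp=',\natural=''$ simultaneously. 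For $\natural=\widetilde{}$ the paper applies Lemma \ref{lem:prelim} exactly as you propose, with $X_1=\widetilde{X}_{\widetilde{U}^p}$, $X_2$ the uniformization space, $C$ coming from $H=\ker\alpha$ and $Z$ from $\widetilde{Z}$, so that $X_i/C$ is the already-uniformized $''$-level; this again yields (\ref{it:p-adic-unif-a}) and (\ref{it:p-adic-unif-b}) at once. Your remark that ``for $\natural='$ one intersects with $G'(\BA_f)$'' is redundant since $\natural='$ is your starting point, but otherwise the plan is sound and matches the paper.
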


The conclusions of Proposition \ref{prop:p-adic-unif} especially
(\ref{it:p-adic-unif-a}) are well known \cite{RZ,V}. However, the
author has no reference for (\ref{it:p-adic-unif-b}), so we provide
some detail of the proof.
\begin{proof}
Assertion (\ref{it:p-adic-unif-a}) in the case of  $\natural='$
comes from \cite[Theorem 6.50]{RZ}.

For the case of $\sharp='$ and $\natural=''$ we put
$$C=Z'(\BQ)\backslash Z'(\BA_f)/ (Z'(\BA_f) \cap U')$$ and
$$Z=Z''(\BQ)\backslash Z''(\BA_f)/ (Z''(\BA_f) \cap U'').$$ Then
$X''_{U''^{p}}$ is $Z$-equivariantly isomorphic to $X'_{U'^p}*_CZ$,
and $\bar{G}''(\BQ) \backslash (\CH_{\widehat{F^\ur_\fp}}\times
G''(\BA_f)/U'' )$ is $Z$-equivariantly isomorphic to
$\Big(\bar{G}'(\BQ) \backslash (\CH_{\widehat{F^\ur_\fp}}\times
G'(\BA_f)/U' )\Big)*_CZ$. So (\ref{it:p-adic-unif-a}) in the case of
$\natural=''$ and (\ref{it:p-adic-unif-b}) in the case of
$\sharp='$, $\natural=''$ follow.

Now we consider the rest cases. Let $H$ be the kernel of the
homomorphism $\alpha: \widetilde{G}=G\times T_E\rightarrow G''$. Put
$$ C = H(\BQ) \backslash H(\BA_f)/(H(\BA_f)\cap \widetilde{U}), \ \
 Z=\widetilde{Z}(\BQ) \backslash
\widetilde{Z}(\BA_f)/(\widetilde{Z}(\BA_f)\cap \widetilde{U}). $$
Put $X_1=\widetilde{X}_{\widetilde{U}^{p}}$ and
$X_2=\widetilde{\bar{G}}(\BQ) \backslash
(\CH_{\widehat{F^\ur_\fp}}\times
\widetilde{G}(\BA_f)/\widetilde{U}_{p,0}\widetilde{U}^p )$. By Lemma
\ref{lem:geom-conn} (\ref{it:geom-conn}), all connected components
of $X_1$ are geometrically connected; it is obvious that all
connected components of $X_2$ are geometrically connected. Thus $Z$
acts freely on the set of components of $X_1$ (resp. $X_2$).
Furthermore $X_1/C$ is isomorphic to
$X''_{\alpha(\widetilde{U}^p)}$, and $X_2/C$ is isomorphic to
$\bar{G}''(\BQ) \backslash (\CH_{\widehat{F^\ur_\fp}}\times
G''(\BA_f)/U''_{p,0}\alpha(\widetilde{U}^p))$.
We have already proved that $X_1/C$ is $Z/C$-equivariantly
isomorphic to $X_2/C$. Applying Lemma \ref{lem:prelim} we obtain
(\ref{it:p-adic-unif-a}) in the case of $\natural=\widetilde{}\ $
and (\ref{it:p-adic-unif-b}) in the case of $\sharp=\widetilde{}\ ,
\natural=''$.
\end{proof}

\begin{rem} By \cite{V} the similar conclusion of Proposition \ref{prop:p-adic-unif} (\ref{it:p-adic-unif-a}) holds for the case of
$\natural=\emptyset$. We use $X_{U^p}$ to denote
$\bar{G}(\BQ)\backslash (\CH_{\widehat{F_\fp^\ur}} \times
G(\BA_f)/U_{p,0}U^p)$, where the action of $\bar{G}(\BQ)$ on
$\CH_{\widehat{F_\fp^\ur}} \times G(\BA_f)/U_{p,0}U^p$ is defined
similarly.
\end{rem}

\section{Local systems and the associated filtered
$\varphi_q$-isocrystals on Shimura Curves} \label{sec:comp-iso}

\subsection{Local systems on Shimura curves} \label{ss:local-sys}

We choose a number field $L$ splitting $F$ and $B$. We identify
$\{\tau_i: F\rightarrow L\}$ with $I=\{\tau_i: F\rightarrow \BC\}$
by the inclusion $L\rightarrow \BC$. Fix an isomorphism
$L\otimes_{\BQ}B= M(2,L)^I$. Then we have a natural inclusion
$G(\BQ)\hookrightarrow \GL(2,L)^I$. Let $\fP$ be a place of $L$
above $\fp$.

For a multiweight $\mathrm{k}=(k_1,\cdots, k_g,w)$ with $k_1\equiv
\cdots k_g\equiv w \mod 2$ and $k_1\geq 2, \cdots, k_g\geq 2$, we
define the morphism $\rho^{(\mathrm{k})}:G\rightarrow \GL(n,L)$
$(n=\prod_{i=1}^g(k_i-1))$ to be the product $\otimes_{i\in
I}[(\Sym^{k_i-2}\otimes \det^{(w-k_i)/2})\circ \check{pr}_i]$. Here
$\check{pr}_i$ denotes the contragradient representation of the
$i$th projection $pr_i: \GL(2,L)^I\rightarrow \GL(2,L)$. The
algebraic group denoted by $G^c$ in \cite[Chapter III]{Mil}  is the
quotient of $G$ by $\ker(\mathrm{N}_{F/\BQ}:F^\times\rightarrow
\BQ^\times)$. As the restriction of $\rho^{(\mathrm{k})}$ to the
center $F^\times$ is the scalar multiplication by
$\mathrm{N}_{F/\BQ}^{-(w-2)}(\cdot)$, $\rho^{(\mathrm{k})}$ factors
through $G^c$, so we can attach to the representation
$\rho^{(\mathrm{k})}$ a $G(\BA_f)$-equivariant smooth $L_\fP$-sheaf
$\CF{(\mathrm{k})}$ on $M$.

Let $p_2: G''_{E_0}\rightarrow G_{E_0}$ be the map induced by the
second projection on $(D\otimes_\BQ E_0)^\times =D^\times \times
D^\times$ corresponding to the conjugate $E_0\rightarrow E_0$. As
the algebraic representation
$\rho''^{(\mathrm{k})}=\rho^{(\mathrm{k})}\circ p_2$ factors through
$G''^c$, we can attach to it a $G''(\BA_f)$-equivariant smooth
$L_\fP$-sheaf $\CF'' (\mathrm{k})$ on $M''$. Let $\CF'
 (\mathrm{k}) $ be the restriction of $\CF'' {(\mathrm{k})}$ to $M'$.

We define a charcater $\bar{\chi}: T_0\rightarrow \BG_m$ such that
on $\BC$-valued point $\bar{\chi}$ is the inverse of the second
projection $T_{0 \BC}=\BC^\times\times \BC^\times\rightarrow
\BC^\times$. Let $\CF(\bar{\chi})$ be the $L_\fP$-sheaf attached to
the representation $\bar{\chi}$. By \cite{Saito} one has the
following $G(\BA_f)\times T(\BA_f)$-equivariant isomorphism of
$L_\fP$-sheaves
\begin{equation} \label{eq:iso-local-sys} \pr_1^* \CF {(\mathrm{k})}
\simeq \alpha^* \CF'' {(\mathrm{k})}\otimes \beta^*
\CF(\bar{\chi}^{-1})^{\otimes (g-1)(w-2)}  \end{equation} on
$M\times N$, where $\pr_1$ is the projection $M\times N\rightarrow
M$.

Note that $L\otimes_{\BQ}D\simeq (\mathrm{M}_2(L)\times
\mathrm{M}_2(L))^I$. For each $i\in I$, the first component
$\mathrm{M}_2(L)$ corresponds to the embedding $E_0\subset L\subset
\BC$ and the second $\mathrm{M}_2(L)$ to its conjugate.  Let $\CF'$
be the local system $R^1g_*L_\fP$ where $g:\CA\rightarrow M'$ is the
universal $\scrO_D$-abelian scheme; it is a sheaf of
$L\otimes_{\BQ}D$-modules. For each $i\in I$, let $e_i\in
L\otimes_{\BQ}D$ be the idempotent whose $(2,i)$-th component is a
rank one idempotent e.g. $\wvec{1}{0}{0}{0}$ and the other
components are zero. Let $\CF'_i$ denote the $e_i$-part $e_i\cdot
R^1g_* L_\fP$. Note that $\CF'_i$ does not depend on the choice of
the rank one idempotent.  By \cite{Saito} we have an isomorphism of
local systems
\begin{equation} \label{eq:isocrystal-general-case}
\CF'{(\mathrm{k})}=\bigotimes_{i\in
I}\Big(\Sym^{k_i-2}\CF'_i\bigotimes (\det
\CF'_i)^{(w-k_i)/2}\Big).\end{equation}

We can define more local systems on $M'$. For
$(\mathrm{k},\mathrm{v})=(k_1,\cdots, k_g;v_1,\cdots, v_g)$, let
$\CF' {(\mathrm{k},\mathrm{v})}$ be the local system
$\bigotimes_{i\in I}\Big(\Sym^{k_i-2}\CF'_i\bigotimes (\det
\CF'_i)^{v_i}\Big)$.


\subsection{Filtered $\varphi_q$-isocrystals associated to
the local systems} \label{ss:compute}

We use $\tilde{\mathrm{k}}$ uniformly to denote
$(\mathrm{k},\mathrm{v})=(k_1,\cdots, k_g;v_1,\cdots, v_g)$ (resp.
$\mathrm{k}=(k_1,\cdots, k_g,w)$) in the case of $\natural='$ (resp.
$\natural=\emptyset,',''$).

We shall need the filtered $\varphi_q$-isocrystal attached to $\CF
{(\tilde{\mathrm{k}})}$. However we do not know how to compute it.
Instead, we compute that attached to $\pr_1^*\CF
{(\tilde{\mathrm{k}})}$. As a middle step we determine the filtered
$\varphi_q$-isocrystals associated to  $\CF' {(\tilde{\mathrm{k}})}$
and  $\CF'' {(\tilde{\mathrm{k}})}$.

For any integers $k$ and $v$ with $k\geq 2$, and any inclusion
$\sigma: F_\fp\rightarrow L_\fP$, let $V_{\sigma}(k,v)$ be the space
of homogeneous polynomials in two variables $X_\sigma$ and
$Y_\sigma$ of degree $k-2$ with coefficients in $L_\fP$; let
$\GL(2,F_\fp)$ act on $V_{\sigma}(k,v)$ by
$$ \wvec{a}{b}{c}{d}P(X_\sigma, Y_\sigma)  = \sigma(ad-bc)^v
P(\sigma(a)X_\sigma+\sigma(c)Y_\sigma,
\sigma(b)X_\sigma+\sigma(d)Y_\sigma).$$ For
$(\mathrm{k},\mathrm{v})=(k_1,\cdots, k_g; v_1,\cdots, v_g)$ we put
$$V(\mathrm{k}, \mathrm{v})=\bigotimes_{\sigma\in
I}V_{\sigma}(k_\sigma, v_\sigma)$$ where the tensor product is taken
over $L_\fP$.

Let $\bar{G}^\natural$ ($\natural= \emptyset, ', '', \widetilde{}\
$) be the groups defined in Section \ref{ss:notations}. For
$\natural= \widetilde{}$, via the projection
$\bar{G}^{\natural}(\BQ_p)\rightarrow \GL(2,F_\fp)$,
$V(\tilde{\mathrm{k}})$ becomes a $\bar{G}^\natural(\BQ_p)$-module.
For $\natural=',''$, via the projection of
$\bar{G}^\natural(\BQ_p)\subset \mathrm{GL}(2, F_\fp) \times
\mathrm{GL}(2, F_\fp)$ to the second factor, $V(\tilde{\mathrm{k}})$
becomes a $\bar{G}^\natural(\BQ_p)$-module. In each case via the
inclusion $\bar{G}^\natural(\BQ)\hookrightarrow
\bar{G}^\natural(\BQ_p)$, $V(\tilde{\mathrm{k}})$ becomes a
$\bar{G}^\natural(\BQ)$-module. Using the $p$-adic uniformization of
$X^\natural=X^\natural_{U^{\natural,p}}$ we attach to this
$\bar{G}^\natural(\BQ)$-module a local system
$\CV^\natural(\tilde{\mathrm{k}})$ on $X^\natural$.

Let $\varphi_{q,\mathrm{k},\mathrm{v}}$ be the operator on $
V(\mathrm{k}, \mathrm{v}) $  $$\bigotimes_\sigma P_\sigma(X_\sigma,
Y_\sigma) \mapsto \prod_\sigma \sigma(-\pi)^{v_\sigma} \cdot
\bigotimes_\sigma P_\sigma(Y_\sigma, \sigma(\pi)X_\sigma ).$$  For
$\mathrm{k}=(k_1,\cdots, k_g, w)$ we put
$$ V(\mathrm{k}) = V(k_1,\cdots, k_g; (w-k_1)/2,\cdots,
(w-k_g)/2)  $$ and $$ \varphi_{q, \mathrm{k}}=
\varphi_{q,(k_1,\cdots, k_g; (w-k_1)/2,\cdots, (w-k_g)/2)}.$$

Let $\mathscr{F}^\natural(\tilde{\mathrm{k}})$ be the filtered
$\varphi_q$-isocrystal $
\CV^\natural(\tilde{\mathrm{k}})\otimes_{\BQ_p} \scrO_{X^\natural}$
on $X^\natural$ with the $q$-Frobenius $\varphi_{q,
\tilde{\mathrm{k}} }\otimes \varphi_{q,\scrO_{X^\natural}}$ and the
connection $1\otimes \mathrm{d}:
\CV^\natural(\tilde{\mathrm{k}})\otimes_{\BQ_p}
\scrO_{X^\natural}\rightarrow
\CV^\natural(\tilde{\mathrm{k}})\otimes_{\BQ_p}
\Omega^1_{X^\natural}$; the filtration on
\begin{eqnarray}\label{eq:decom-isocrystal}   \CV^\natural(\tilde{\mathrm{k}})\otimes_{\BQ_p}
\scrO_{X^\natural} = \bigoplus_{\tau:F_\fp\hookrightarrow L_\fP}
\CV^\natural(\tilde{\mathrm{k}})\otimes_{\tau, F_\fp}
\scrO_{X^\natural} \end{eqnarray}
is given by
\begin{eqnarray*} && \Fil^{j+v_\tau}
(\CV^\natural(\tilde{\mathrm{k}})\otimes_{\tau, F_\fp} \scrO_{X^\natural})  \\
&=& \left\{\begin{array}{ll}
\CV^\natural(\tilde{\mathrm{k}})\otimes_{\tau, F_\fp}
\scrO_{X^\natural} & \text{ if } j\leq 0 ,
\\ \text{the } \scrO_{X^\natural}\text{-submodule locally generated by polynomials}  & \\
\hskip 110pt \text{in } V(\tilde{\mathrm{k}}) \text{ divided by } (z
X_\tau +  Y_\tau)^{j}   & \text{ if } 1\leq j\leq k_\tau-2
\\ 0 & \text{ if } j \geq k_\tau - 1 \end{array}\right.
\end{eqnarray*}
with the convention that $v_\tau=\frac{w-k_\tau}{2}$ in the case of
$\tilde{\mathrm{k}}=(k_1,\cdots, k_g,w)$, where $z$ is the canonical
coordinate on $\CH_{\widehat{F_\fp^\ur}}$.

\begin{lem}\label{lem:isocrystal-special}
When $k_1=\cdots=k_{i-1}=k_{i+1}=\cdots=k_g=2$, $k_i=3$, and
$v_1=\cdots=v_g=0$, the filtered $\varphi_q$-isocrystal attached to
$\CF' {(\mathrm{k},\mathrm{v})}$ is isomorphic to
$\mathscr{F}'(\mathrm{k},\mathrm{v})$.
\end{lem}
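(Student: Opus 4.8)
The plan is to reduce the statement to the explicit description of the filtered $\varphi_q$-isocrystal $\CM$ of the universal special formal $\scrO_{B_\fp}$-module obtained in Section \ref{ss:univ-fil-phi-mod}, transported through the $p$-adic uniformization. First, under the stated hypotheses every factor $\Sym^{k_j-2}\CF'_j\otimes(\det\CF'_j)^{v_j}$ with $j\neq i$ is a trivial rank one local system, and $\Sym^{k_i-2}\CF'_i\otimes(\det\CF'_i)^{v_i}=\Sym^1\CF'_i=\CF'_i$; hence $\CF'(\mathrm{k},\mathrm{v})=\CF'_i=e_i\cdot R^1g_*L_\fP$, and it suffices to determine the filtered $\varphi_q$-isocrystal attached to this one local system. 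By the theory of filtered $\varphi_q$-isocrystals attached to $p$-adic local systems (\cite{Faltings, CI}, recalled in Section \ref{sec:CoIo}), applied to $R^1g_*L_\fP$ together with Dieudonn\'e theory for the universal $\scrO_D$-abelian scheme $g:\CA\to M'$, this isocrystal is the $e_i$-summand of the relative de Rham cohomology $H^1_\dR(\CA/M')$ --- equivalently of the contravariant Dieudonn\'e crystal of $\CA[p^\infty]$ --- with its Gauss--Manin connection, Hodge filtration and crystalline $q$-Frobenius.

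Second, I would transport this through the $p$-adic uniformization $\CH_{\widehat{F^\ur_\fp}}\times G'(\BA_f)/U'\to X'_{U'^p}$ of Proposition \ref{prop:p-adic-unif}(\ref{it:p-adic-unif-a}) in the case $\natural='$ (that is, \cite[Theorem 6.50]{RZ}). That theorem identifies, already on the formal completion along the special fibre, the $p$-divisible group $\CA[p^\infty]$ with its $\scrO_D\otimes\BZ_p$-action with the universal special formal $\scrO_{B_\fp}$-module $\CG$ of Section \ref{sec:univ-spec-mod}, up to a constant \'etale factor coming from the prime-to-$p$ level; here one uses $D\otimes\BQ_p\simeq B_\fp\oplus B_\fp$, and the idempotent $e_i$ singles out the $B_\fp$-factor carrying $\CG$ (matching the second projection $\bar{G}'(\BQ_p)\subset\GL(2,F_\fp)\times\GL(2,F_\fp)\to\GL(2,F_\fp)$ used to make $V(\mathrm{k},\mathrm{v})$ a $\bar{G}'$-module) together with an embedding $\tau_i:F_\fp\hookrightarrow L_\fP$. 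Consequently the pullback of the $e_i$-summand of $H^1_\dR(\CA/M')$ is the corresponding piece of the filtered $\varphi_q$-isocrystal $\CM$.

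Third, it remains to read off the answer from Section \ref{ss:univ-fil-phi-mod}. Modulo its trivially filtered non-canonical part, $\CM$ is $B_\fp\otimes_{F_\fp}\scrO_{\CH,\widehat{F^\ur_\fp}}$, with $\{X,Y\}$ a basis over $F^{(2)}_\fp\otimes_{F_\fp}\scrO_{\CH,\widehat{F^\ur_\fp}}$ on which $\wvec{a}{b}{c}{d}$ acts by $X\mapsto(a\otimes1)X+(c\otimes1)Y$ and $Y\mapsto(b\otimes1)X+(d\otimes1)Y$, while acting on $\CH$ by $z\mapsto\frac{az+b}{cz+d}$, with $q$-Frobenius $\widetilde{\mathrm{F}}^{v_p(q)}\otimes\varphi_{q,\CH}$, connection $1\otimes\mathrm{d}$, and $\Fil^1$ generated by $zX+Y$, $\Fil^2=0$. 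Extracting the $e_i$-piece yields a rank two $L_\fP$-object on which, after the uniformization, $\bar{G}'(\BQ)$ acts through $\bar{G}'(\BQ)\hookrightarrow\bar{G}'(\BQ_p)$, the second projection to $\GL(2,F_\fp)$, and $\tau_i$: this is exactly the $\bar{G}'(\BQ)$-module $V_{\tau_i}(3,0)=L_\fP X_{\tau_i}\oplus L_\fP Y_{\tau_i}$ underlying $\CV'(\mathrm{k},\mathrm{v})$. An elementary computation of $\widetilde{\mathrm{F}}^{v_p(q)}$ on $X$ and $Y$ shows that the crystalline Frobenius acts by $\wvec{0}{\tau_i(\pi)}{1}{0}$, agreeing with $\varphi_{q,\mathrm{k},\mathrm{v}}$ (which sends $X_{\tau_i}\mapsto Y_{\tau_i}$ and $Y_{\tau_i}\mapsto\tau_i(\pi)X_{\tau_i}$); the connection is $1\otimes\mathrm{d}$; and $\Fil^1$ becomes $\scrO_{X'}\cdot(zX_{\tau_i}+Y_{\tau_i})$, $\Fil^2=0$, which is precisely the Hodge filtration defining $\mathscr{F}'(\mathrm{k},\mathrm{v})$ in this weight. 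Since Drinfeld's theorem and the uniformization are $\bar{G}'(\BQ)$-equivariant (compatibly with the actions on $\CH$, on $X$, $Y$ and on $z$) and $G'(\BA_f)$-equivariant, this identification descends to $X'$; being insensitive to the unramified base change $F_\fp\to\widehat{F^\ur_\fp}$, it yields the asserted isomorphism.

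The hard part is the second step: making the Rapoport--Zink uniformization bookkeeping completely precise, so that the contravariant Dieudonn\'e crystal of $\CA[p^\infty]$ becomes $\CM$ with all normalizations consistent --- which copy of $B_\fp$ in $D\otimes\BQ_p$ (hence which of the two $p$-divisible groups matched by the polarization) the idempotent $e_i$ selects, the ensuing identification of dualities, coefficients and Tate twists, and the normalization of $\GL(2,F_\fp)=(\End^0_{\scrO_{B_\fp}}\Phi)^\times$ from Section \ref{sec:univ-spec-mod} so that the resulting $\GL(2,F_\fp)$-action on the submodule spanned by $X$ and $Y$ matches $\rho^{(\mathrm{k})}$ composed with the relevant projection. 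The remaining linear-algebra check that $\widetilde{\mathrm{F}}^{v_p(q)}$ acts by $\wvec{0}{\tau_i(\pi)}{1}{0}$, and the reading-off of the Hodge filtration and connection from Section \ref{ss:univ-fil-phi-mod}, are then routine.
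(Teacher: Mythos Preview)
Your approach is essentially the one in the paper: reduce to $\CF'_i$, pass to the formal completion of the universal abelian scheme, invoke the Rapoport--Zink uniformization to identify the relevant $p$-divisible group with (a piece of) the universal special formal $\scrO_{B_\fp}$-module $\CG$, and read off the answer from Section~\ref{ss:univ-fil-phi-mod}.

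The one point where the paper is sharper than your sketch---and which resolves exactly what you flagged as the ``hard part''---is the bookkeeping of idempotents. Your description ``$e_i$ singles out the $B_\fp$-factor carrying $\CG$'' is not quite right: $e_i$ is a \emph{rank-one} idempotent in the $(2,i)$-th copy of $M_2(L)$, so it cuts further than a whole $B_\fp$-factor. The paper handles this in two steps. First it uses the full rank-two idempotent $\tilde{e}_i=\wvec{1}{0}{0}{1}$ in that copy, and cites \cite[6.43]{RZ} to identify $\tilde{e}_i(\fo_{L_\fP}\otimes_{\BZ_p}\widehat{\CA})$ with the pullback of $\fo_{L_\fP}\otimes_{\tau_i,\fo_{F_\fp}}\CG$. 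Then it observes that the further cut from $\tilde{e}_i$ down to the rank-one $e_i$ corresponds, on the $\CG$-side, to choosing one of the two extensions $\tau_{i,0},\tau_{i,1}$ of $\tau_i$ to $F_\fp^{(2)}$; one may normalize $e_i$ so that it is the projection onto the $\tau_{i,0}$-eigenspace. With this interpretation in hand, the comparison with the explicit basis $\{X,Y\}$ and filtration of Section~\ref{ss:univ-fil-phi-mod} is immediate, and your remaining checks (Frobenius $\wvec{0}{\tau_i(\pi)}{1}{0}$, connection $1\otimes\rd$, $\Fil^1$ generated by $zX_{\tau_i}+Y_{\tau_i}$) go through as you say.
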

\begin{proof}
Let $\tilde{e}_i\in L\otimes_{\BQ}D$ be the idempotent whose
$(2,i)$-th component is $\wvec{1}{0}{0}{1}$ and the other component
are zero. Let $\CA$ be the universal $\scrO_D$-abelian scheme over
$M'$, $\widehat{\CA}$ the formal module on $X'$ attached to $\CA$.
Note that $\tilde{e}_i
(\mathfrak{o}_{L_\fP}\otimes_{\BZ_p}\widehat{\CA})$ is just the
pullback of $\mathfrak{o}_{L_\fP}\otimes_{\tau_i,
\mathfrak{o}_{F_\fp} }\CG$ by the projection $X'_{U'^p}\rightarrow
(\bar{G}'(\BQ)\cap U'^pU'_{p,0})\backslash
\CH_{\widehat{F^\mathrm{ur}_\fp}}$ \cite[6.43]{RZ}, where $\CG$ is
the universal special formal $\scrO_{B_\fp}$-module (forgetting the
information of $\rho$ in Drinfeld's moduli problem).

As $L_\fP$ splits $B_\fp$, $L_\fP$ contains all embeddings of
$F^{(2)}_\fp$. The embedding $\tau_i: F_\fp\hookrightarrow L_\fP$
extends in two ways to $F^{(2)}_\fp$ denoted respectively by
$\tau_{i,0}$ and $\tau_{i,1}$. Then
$$ \fo_{ L_\fP } \otimes_{ \tau_i, \fo_{F_\fp} } \fo_{B_\fp}= \fo_{L_\fP} \otimes_{ \tau_{i,0},
\fo_{F_\fp^{(2)}} } \fo_{B_{\fp}} \bigoplus \fo_{L_\fP} \otimes_{
\tau_{i,1}, \fo_{ F_\fp^{(2)} } } \fo_{B_\fp}. $$ We decompose
$\mathfrak{o}_{L_\fP}\otimes_{\tau_i, \mathfrak{o}_{F_\fp}}\CG$ into
the sum of two direct summands according to the action of
$\fo_{F_\fp^{(2)}}\subset \fo_{B_\fp}$: $ \fo_{F_\fp^{(2)}}$ acts by
 $\tau_{i,0}$ on the first direct summand, and acts by
 $\tau_{i,1}$ on the second. Without loss of generality we
may assume that $e_i$ in the definition of $\CF'_i$ (see Section
\ref{ss:local-sys}) is chosen such that $e_i$ is the projection onto
the first direct summand. So $e_i
(\mathfrak{o}_{L_\fP}\otimes_{\BZ_p}\widehat{\CA})$ is just the
pullback of $\mathfrak{o}_{L_\fP}\otimes_{\tau_{i,0},
\mathfrak{o}_{F_\fp^{(2)}} }\CG$ by the projection
$X'_{U'^p}\rightarrow (\bar{G}'(\BQ)\cap U'^pU'_{p,0})\backslash
\CH_{\widehat{F^\mathrm{ur}_\fp}}$. Now the statement of our lemma
follows from the discussion in Section \ref{ss:univ-fil-phi-mod}.
\end{proof}

\begin{prop}\label{prop:fil-F-iso}
The filtered $\varphi_q$-isocrystal attached to $\CF'
{(\mathrm{k},\mathrm{v})}$ is isomorphic to
$\mathscr{F}'(\mathrm{k},\mathrm{v})$.
\end{prop}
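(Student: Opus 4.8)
The plan is to reduce the general multiweight case to the elementary case established in Lemma \ref{lem:isocrystal-special} by exploiting the tensor-product decomposition \eqref{eq:isocrystal-general-case} together with the compatibility of the functor ``filtered $\varphi_q$-isocrystal attached to a local system'' with tensor products, symmetric powers, and determinants. First I would recall that the local systems $\CF'_i = e_i \cdot R^1 g_* L_\fP$ each carry, by the functoriality of Faltings' construction (Proposition \ref{prop:semistable} and the discussion in \cite{Faltings, CI}), an associated filtered $\varphi_q$-isocrystal, and that this association is compatible with all the operations appearing in \eqref{eq:isocrystal-general-case}. So it suffices to (i) identify the filtered $\varphi_q$-isocrystal $\mathscr{E}_i$ attached to each $\CF'_i$ explicitly, and then (ii) plug this identification into \eqref{eq:isocrystal-general-case} and check that the resulting tensor construction matches $\mathscr{F}'(\mathrm{k},\mathrm{v})$ term by term.

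For step (i): the key point is that $\CF'_i$ is, up to the action of the rank-one idempotent $e_i$, the pullback of the first-degree de Rham/étale realization of the universal special formal $\scrO_{B_\fp}$-module $\CG$ along the projection $X'_{U'^p}\to (\bar{G}'(\BQ)\cap U'^pU'_{p,0})\backslash \CH_{\widehat{F^\ur_\fp}}$, exactly as spelled out in the proof of Lemma \ref{lem:isocrystal-special} (using \cite[6.43]{RZ}). Hence the filtered $\varphi_q$-isocrystal $\mathscr{E}_i$ attached to $\CF'_i$ is, after the $e_i$-cut, the restriction to the $\tau_i$-component of the canonical part of the isocrystal $\CM$ computed in Section \ref{ss:univ-fil-phi-mod}: namely the rank-two $\scrO_{X'}$-module with basis $\{X_{\tau_i}, Y_{\tau_i}\}$, Frobenius sending $X_{\tau_i}\mapsto Y_{\tau_i}$, $Y_{\tau_i}\mapsto \sigma(\pi)X_{\tau_i}$ (up to the normalization recorded before Lemma \ref{lem:isocrystal-special}), and one-step filtration $\Fil^1$ generated over $\scrO_{X'}$ by $z X_{\tau_i} + Y_{\tau_i}$. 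Concretely this is precisely the weight-$(k,v)=(2,0)$ instance of $\mathscr{F}'$ for the $\tau_i$-slot; Lemma \ref{lem:isocrystal-special} pins down the first nontrivial interesting case $k_i = 3$ and serves as the sanity check that the symmetric-power recipe for $\Fil$, $\varphi_q$ is the correct one.

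For step (ii): having $\mathscr{E}_i$, I would observe that $\Sym^{k_i-2}\mathscr{E}_i$ has as $\scrO_{X'}$-basis the degree-$(k_i-2)$ monomials in $X_{\tau_i}, Y_{\tau_i}$, that $\varphi_q$ on this symmetric power is exactly $\varphi_{q,k_i,v_i}$ restricted to the $\tau_i$-slot (it sends $\bigotimes P \mapsto \prod \sigma(-\pi)^{v_\sigma}\bigotimes P_\sigma(Y_\sigma,\sigma(\pi)X_\sigma)$, matching the displayed formula for $\varphi_{q,\mathrm{k},\mathrm{v}}$ once one accounts for the determinant twist $(\det\CF'_i)^{v_i}$, whose isocrystal is the rank-one object with Frobenius scalar $\sigma(-\pi)^{v_i}$), and that the filtration on $\Sym^{k_i-2}\mathscr{E}_i$ induced from the one-step filtration on $\mathscr{E}_i$ is exactly ``the $\scrO_{X'}$-submodule generated by polynomials divided by $(z X_{\tau_i}+Y_{\tau_i})^j$'' in degrees $1\le j\le k_i-2$ and zero beyond — this is the elementary computation that the $j$-th filtration step of a symmetric power of a rank-two filtered module with $\Fil^1$ a line $\ell$ is spanned by $\ell^j\cdot(\text{everything})$. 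Taking the tensor product over $i\in I$ and tracking the determinant twists by $(\det\CF'_i)^{v_i}$ then reproduces $V(\mathrm{k},\mathrm{v})\otimes\scrO_{X'}$ with precisely the Frobenius $\varphi_{q,\mathrm{k},\mathrm{v}}$ and the filtration written in \eqref{eq:decom-isocrystal}, so the two filtered $\varphi_q$-isocrystals are isomorphic.

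\textbf{Main obstacle.} The delicate point is not any single calculation but making the functoriality of Faltings' correspondence airtight for the operations in \eqref{eq:isocrystal-general-case}: one must know that passing from a $\BQ_p$-local system on the Shimura curve to its filtered $\varphi_q$-isocrystal commutes with $\Sym^{k_i-2}$, with $\det$, with tensor product over the various $i\in I$, and with the base change $\BQ_p\rightsquigarrow L_\fP$ and the idempotent decomposition $e_i$. For $R^1 g_*$ of an abelian scheme this compatibility is built into the comparison theorems (the isocrystal of $\Sym^m H^1$ is $\Sym^m$ of the isocrystal of $H^1$, etc.), but one should state it cleanly — most economically by invoking that all these objects live in, and the comparison is a tensor functor between, the Tannakian categories of local systems and of filtered $\varphi_q$-isocrystals. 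A secondary bookkeeping nuisance is keeping the normalizations of $\varphi_q$ (the $\sigma(-\pi)^{v_\sigma}$ factors and the choice forced before Lemma \ref{lem:isocrystal-special} that $v_0 = e_0 X$, etc.) consistent between the universal-module computation of Section \ref{ss:univ-fil-phi-mod} and the definition of $\mathscr{F}'(\mathrm{k},\mathrm{v})$; Lemma \ref{lem:isocrystal-special} already fixes these in the base case, so the induction on weights only has to propagate them through $\Sym$ and $\otimes$, which is routine once the Tannakian statement is in hand.
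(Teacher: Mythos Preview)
Your proposal is correct and follows essentially the same route as the paper: invoke the tensor decomposition \eqref{eq:isocrystal-general-case}, use Lemma \ref{lem:isocrystal-special} to identify the filtered $\varphi_q$-isocrystal attached to each $\CF'_i$, and then compute $\Sym^{k_i-2}$ and the determinant twists. The paper compresses all of this into two sentences (``By \eqref{eq:isocrystal-general-case} \dots\ By Lemma \ref{lem:isocrystal-special} a simple computation implies our conclusion''), so your write-up is simply a more explicit unpacking of what the paper leaves implicit, including the Tannakian compatibility that the paper takes for granted.
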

\begin{proof}
Let $\mathscr{F}'_i$ denote the filtered $\varphi_q$-isocrystal
attached to $\mathcal{F}'_i$. By (\ref{eq:isocrystal-general-case})
the filtered $\varphi_q$-isocrystal attached to
$\CF'(\mathrm{k},\mathrm{v})$ is isomorphic to
\begin{equation} \label{eq:isocrystal-general-case}
\bigotimes_{i\in I}\Big(\Sym^{k_i-2}\mathscr{F}'_i\bigotimes (\det
\mathscr{F}'_i)^{(w-k_i)/2}\Big).\end{equation} By Lemma
\ref{lem:isocrystal-special} a simple computation implies our
conclusion.
\end{proof}

\begin{cor} \label{cor:fil-mod-1}
The filtered $\varphi_q$-isocrystal attached to
$\CF''{(\mathrm{k})}$ is isomorphic to $\mathscr{F}''(\mathrm{k})$.
\end{cor}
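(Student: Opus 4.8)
The plan is to deduce this from Proposition~\ref{prop:fil-F-iso} by means of the morphism $M'\to M''$ attached to the inclusion $G'\hookrightarrow G''$. By construction (Section~\ref{ss:local-sys}) the local system $\CF'(\mathrm{k})$ is the restriction of $\CF''(\mathrm{k})$ along this morphism, and I claim the same holds for the associated isocrystals, i.e.\ $\mathscr{F}'(\mathrm{k})$ is the restriction of $\mathscr{F}''(\mathrm{k})$. Indeed, both are built from the space $V(\mathrm{k})$ with the action of $\GL(2,F_\fp)$ through the second factor, so that the $\bar G'(\BQ)$-module underlying $\mathscr{F}'(\mathrm{k})$ is the restriction along $\bar G'(\BQ)\hookrightarrow\bar G''(\BQ)$ of the one underlying $\mathscr{F}''(\mathrm{k})$; moreover the $p$-adic uniformizations of $X'$ and $X''$ are compatible with $G'\hookrightarrow G''$ by Proposition~\ref{prop:p-adic-unif}(\ref{it:p-adic-unif-b}) in the case $\sharp='$, $\natural=''$, and under this identification the Frobenii $\varphi_{q,\mathrm{k}}$ and the filtrations coincide on the nose. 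Since the functor of Faltings and Coleman--Iovita \cite{Faltings, CI} carrying an $L_\fP$-local system to its associated filtered $\varphi_q$-isocrystal commutes with pullback, Proposition~\ref{prop:fil-F-iso} yields an isomorphism between the filtered $\varphi_q$-isocrystal attached to $\CF''(\mathrm{k})$ and $\mathscr{F}''(\mathrm{k})$ \emph{after restriction to $M'$}.

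It remains to promote this to an isomorphism over $M''$ itself. Here I would invoke the description obtained in the proof of Proposition~\ref{prop:p-adic-unif}: one has a $Z$-equivariant identification $X''_{U''^p}\cong X'_{U'^p}*_CZ$, where $C=Z'(\BQ)\backslash Z'(\BA_f)/(Z'(\BA_f)\cap U')$ and $Z=Z''(\BQ)\backslash Z''(\BA_f)/(Z''(\BA_f)\cap U'')$, so that up to the $*_CZ$-construction and the $G''(\BA_f)$-action, $M''$ is a disjoint union of copies of $M'$. Both $\CF''(\mathrm{k})$ and $\mathscr{F}''(\mathrm{k})$ are $G''(\BA_f)$-equivariant by construction, and the isomorphism obtained over $M'$ is $G'(\BA_f)$-equivariant by functoriality; transporting it by the $G''(\BA_f)$-action and gluing along the $*_CZ$-decomposition --- precisely the kind of descent packaged in Lemma~\ref{lem:prelim} and used in the proof of Proposition~\ref{prop:p-adic-unif}(\ref{it:p-adic-unif-b}) --- produces the desired $G''(\BA_f)$-equivariant isomorphism over $M''$. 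The final matching of explicit filtrations is the same simple computation already invoked for Proposition~\ref{prop:fil-F-iso}.

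I expect the spreading-out step to be the only genuine obstacle: one must check that the various $G''(\BA_f)$-translates of the isomorphism constructed on a single copy of $M'$ agree on overlaps, equivalently that the two natural trivializations of each isocrystal over a given connected component differ exactly by the prescribed equivariant structure. This is bookkeeping of the same flavour as the arguments in Section~\ref{sec:p-unif}; once it is settled, the corollary follows formally from Proposition~\ref{prop:fil-F-iso}.
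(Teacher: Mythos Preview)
Your strategy is sound and would work, but the paper takes a shorter route: its entire proof is the sentence ``This follows from Proposition~\ref{prop:fil-F-iso} and \cite[Lemma~6.1]{Saito}.'' In other words, rather than carrying out the spreading-out from $X'$ to $X''$ by hand via the $*_CZ$-description and Lemma~\ref{lem:prelim}, the paper simply invokes an external lemma of Saito that packages the passage from the $'$-picture to the $''$-picture.

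Your approach is more self-contained: you reconstruct what Saito's lemma presumably supplies, namely that because $X''_{U''^p}\cong X'_{U'^p}*_CZ$ and both the local system $\CF''(\mathrm{k})$ and the target isocrystal $\mathscr{F}''(\mathrm{k})$ are built $Z$-equivariantly from their $'$-counterparts, the isomorphism of Proposition~\ref{prop:fil-F-iso} on $X'$ propagates to $X''$. The bookkeeping you flag---checking that the translates of the isomorphism agree across the copies---is genuine but routine once one observes that the isomorphism of Proposition~\ref{prop:fil-F-iso} is already $\bar G'(\BQ)$- and $Z'(\BA_f)$-equivariant by functoriality of the Faltings/Coleman--Iovita construction, and that the $Z$-action permutes the components freely. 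So your argument is correct; it simply unpacks what the paper delegates to \cite[Lemma~6.1]{Saito}.
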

\begin{proof}
This follows from Proposition \ref{prop:fil-F-iso} and \cite[Lemma
6.1]{Saito}.
\end{proof}

\begin{lem}\label{lem:fil-mod-1} The filtered $\varphi_q$-isocrystal associated to the
local system $\CF(\bar{\chi})$ over
$(N_{E_0,0})_{\widehat{F_\fp^\ur}}$ is $\CF(\bar{\chi})\otimes
\scrO_{(N_{E_0,0})_{\widehat{F_\fp^\ur}}}$ with the $q$-Frobenius
being $1\otimes \varphi_{q, (N_{E_0,0})_{\widehat{F_\fp^\ur}}}$ and
the filtration being trivial.
\end{lem}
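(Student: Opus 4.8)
The plan is to exploit that $N_{E_0}$ is a zero-dimensional Shimura variety, so that the connection and the filtration are essentially forced, and then to isolate the Frobenius, where the hypothesis that $p$ splits in $E_0$ does the work. Since $\dim N_{E_0}=0$ we have $\Omega^1_{(N_{E_0,0})_{\widehat{F_\fp^\ur}}}=0$; hence on any isocrystal over $(N_{E_0,0})_{\widehat{F_\fp^\ur}}$ the integrable connection is forced to be $1\otimes\mathrm{d}$, and the underlying $\scrO$-module of the $\varphi_q$-isocrystal attached to $\CF(\bar\chi)$ is exactly $\CF(\bar\chi)\otimes\scrO_{(N_{E_0,0})_{\widehat{F_\fp^\ur}}}$. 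For the filtration, on a Shimura variety attached to a torus datum the Hodge filtration of the sheaf attached to a character $\bar\chi$ of $T_{E_0}$ is the weight filtration of $\bar\chi\circ\mu_{h_{E_0}}$ on the one-dimensional fibre; a routine computation with the cocharacter $\mu_{h_{E_0}}$ coming from $h_{E_0}\colon z\mapsto z^{-1}$ (and the fact that $\bar\chi$ is the inverse of the second projection of $T_{E_0,\BC}=\BC^\times\times\BC^\times$) shows that $\bar\chi$ has weight $0$, so this filtration has its single jump in degree $0$, i.e. is trivial.

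It remains to identify the Frobenius, and this is the only genuine step. Since $p$ splits in $E_0$ and the level of $N_{E_0,0}$ is maximal at $p$, the canonical integral model of $N_{E_0,0}$ is finite \'etale over $\BZ_p$ (good reduction); in particular $(N_{E_0,0})_{\widehat{\scrO^\ur}}$ is a finite disjoint union of copies of $\Spec\widehat{\scrO^\ur}$. The $\fP$-adic local system $\CF(\bar\chi)$ is the $\fP$-adic realisation of the algebraic Hecke character of $E_0$ determined by $\bar\chi$ and $\mu_{h_{E_0}}$, which is unramified at the place of $E_0$ above $p$ cut out by our fixed embedding precisely because $p$ splits; equivalently, it is crystalline at $\fp$ with Hodge--Tate weight $0$ by the computation above, hence unramified, weak admissibility forcing its $\varphi_q$-eigenvalue to be a $p$-adic unit. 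Therefore the restriction of $\CF(\bar\chi)$ to $(N_{E_0,0})_{\widehat{F_\fp^\ur}}$ has trivial inertia action, and since $\widehat{F_\fp^\ur}$ has algebraically closed residue field this restriction is the constant local system $\CF(\bar\chi)$. Consequently its associated $\varphi_q$-isocrystal is the standard one, $\CF(\bar\chi)\otimes\scrO_{(N_{E_0,0})_{\widehat{F_\fp^\ur}}}$ with $q$-Frobenius $1\otimes\varphi_{q,(N_{E_0,0})_{\widehat{F_\fp^\ur}}}$ and trivial filtration, which is the assertion.

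The one ingredient that is not purely formal is the crystallinity/weight input for $\CF(\bar\chi)$ at $\fp$ — that the sheaves on $N_{E_0}$ attached to algebraic characters of $T_{E_0}$ are governed by algebraic Hecke characters which, because $p$ splits in $E_0$, are unramified above $p$. I would quote this from the description of these sheaves used by Carayol and Saito (it is, at bottom, class field theory for the torus $T_{E_0}$), or, to stay self-contained, argue purely at the level of filtered $\varphi_q$-modules: the $\varphi_q$-isocrystal in question is unit-root of rank one (its single Hodge number is $0$, matching the Newton number by weak admissibility), and every rank-one unit-root $\varphi_q$-isocrystal becomes trivial after base change to $\widehat{F_\fp^\ur}$, by surjectivity of $c\mapsto\varphi_q(c)/c$ on $(\widehat{F_\fp^\ur})^\times$ (a form of Lang's theorem, valid because the residue field is algebraically closed). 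Everything else is bookkeeping.
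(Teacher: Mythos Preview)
Your argument is correct and lands on the same key geometric fact as the paper --- that $(N_{E_0,0})_{\widehat{F_\fp^\ur}}$ is a finite disjoint union of copies of $\Spec\widehat{F_\fp^\ur}$ --- but you reach the conclusion by a different path. The paper makes the single reduction ``it suffices that every geometric point of $(N_{E_0,0})_{\widehat{F_\fp^\ur}}$ is defined over $\widehat{F_\fp^\ur}$'' and then verifies this directly from Deligne's explicit description of the Galois action on the points $T_{E_0}(\BQ)\backslash T_{E_0}(\BA_f)/U$ via the reciprocity map $r_f$ built from $\mu_{h_{E_0}}$: when the level at $p$ is maximal, the field of definition of each point is unramified at $p$, hence contained in $\widehat{F_\fp^\ur}$. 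Once the base is a union of spectra of $\widehat{F_\fp^\ur}$, the local system is constant on each component and the assertion about the isocrystal is immediate, without needing to analyse the filtration or the Frobenius separately.

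By contrast you decompose the problem into connection, filtration, and Frobenius, handling the filtration via the Hodge weight computation $\bar\chi\circ\mu_{h_{E_0}}=1$ and the Frobenius via unramifiedness of the associated Hecke character (or, alternatively, the Lang-type triviality of rank-one unit-root isocrystals over $\widehat{F_\fp^\ur}$). Your route is more explicitly Hodge-theoretic and has the virtue of isolating exactly which feature of $\bar\chi$ is responsible for each part of the statement; the paper's route is shorter and invokes only the class-field-theoretic description of $\pi_0$ of a torus Shimura variety, from which everything else follows at once. Both arguments ultimately rest on the same input (class field theory for $T_{E_0}$ and the splitting of $p$ in $E_0$), so the difference is one of packaging rather than substance.
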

\begin{proof}
We only need to show that any geometric point of $(N_{E_0,
0})_{\widehat{F_\fp^\ur}}$ is defined over $\widehat{F_\fp^\ur}$.

Let $h_{E_0}$ be as in Section \ref{ss:shimura-curves}, $\mu$ the
cocharacter of $T_0$ defined over $E_0$ attached to $h_{E_0}$. Let
$r$ be the composition $$\BA_{E_0}^\times\xrightarrow{\mu}
T_0(\BA_{E_0}) \xrightarrow{\mathrm{N}^{E_0}_{\BQ}} T_0(\BA).$$ Let
$$ \mathrm{art}_{E_0}: \BA^\times_{E_0}\twoheadrightarrow
\Gal(E_0^\mathrm{ab}/E_0) $$ be the reciprocal of the reciprocity
map from class field theory. For any compact open subgroup $U$ of
$T_0(\BA_{f})$, $\Gal(\overline{\BQ}/E_0)$ acts on
$(N_{E_0})_U(\overline{\BQ})= T_0(\BQ)\backslash T_0(\BA_{f})/U$ by
$\sigma (T_0(\BQ) a U) = T_0(\BQ) r_f(s_\sigma) a U$, where
$s_\sigma$ is any id\`ele such that
$\mathrm{art}_{E_0}(s_\sigma)=\sigma| E^\mathrm{ab}$, and $r_{f}$ is
the composition
$$\BA^\times_{E_0}\rightarrow T_0(\BA) \rightarrow T_0(\BA_{f})$$ of
$r$ and the projection map $T_0(\BA)\rightarrow T_0(\BA_{f})$. Let
$\mathcal{I}$ be the subgroup of $\Gal(\overline{\BQ}/E_0)$
consisting of $\sigma$ such that $s_\sigma\in r_f^{-1}(U)$. Put
$\mathcal{K}=\overline{\BQ}^{\mathcal{I}}$. Then any geometric point
of $(N_{E_0})_U$ is defined over $\mathcal{K}$. Observe that, when
$U$ is of the form $U_{p,0}U^p$ with $U^p$ a compact open subgroup
of $T_0(\BA_f^p)$ and $U_{p,0}$ the maximal compact open subgroup of
$T_0(\BQ_p)$, $\mathcal{K}$ is unramified over $p$. Therefore, any
geometric point of $(N_{E_0, 0})_{\widehat{F_\fp^\ur}}$ is already
defined over $\widehat{F_\fp^\ur}$.
\end{proof}

\begin{cor} \label{cor:fil-isocry}
The filtered $\varphi_q$-isocrystal attached to
$\pr_1^*\CF(\mathrm{k})$ is $\pr_1^*\mathscr{F}(\mathrm{k})$.
\end{cor}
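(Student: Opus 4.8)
The plan is to transport the isomorphism (\ref{eq:iso-local-sys}) of $\BQ_p$-local systems through the functor $\mathrm{isoc}(-)$ sending a local system to its associated filtered $\varphi_q$-isocrystal, and then to feed in Corollary \ref{cor:fil-mod-1} and Lemma \ref{lem:fil-mod-1}. First I would record that $\mathrm{isoc}(-)$ is a tensor functor and is compatible with pullback along morphisms of analytically smooth $p$-adic formal $\scrO_{F_\fp}$-schemes. Applying it to (\ref{eq:iso-local-sys}) on $M\times N_E$ and passing to the $p$-adic uniformizations base-changed to $\widehat{F_\fp^\ur}$, I obtain on $\widetilde X_{\widetilde U^p}$ (the base change to $\widehat{F_\fp^\ur}$ of $\widetilde M_{0,\widetilde U^p}$, that is, $X^\natural_{U^{\natural,p}}$ with $\natural=\widetilde{\ }$) an isomorphism of filtered $\varphi_q$-isocrystals $\mathrm{isoc}(\pr_1^*\CF(\mathrm{k}))\simeq\alpha^*\mathrm{isoc}(\CF''(\mathrm{k}))\otimes\beta^*\mathrm{isoc}(\CF(\bar\chi^{-1}))^{\otimes(g-1)(w-2)}$. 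This reduces the problem to computing the two factors on the right and recognizing the tensor product.

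By Corollary \ref{cor:fil-mod-1} the first factor is $\mathscr{F}''(\mathrm{k})$. By Lemma \ref{lem:fil-mod-1} the isocrystal $\mathrm{isoc}(\CF(\bar\chi))$ over $(N_{E_0,0})_{\widehat{F_\fp^\ur}}$ has rank one, connection $1\otimes\mathrm{d}$, trivial filtration, and $q$-Frobenius acting as the identity on the local-system factor; the same holds for $\CF(\bar\chi^{-1})^{\otimes(g-1)(w-2)}$, so $\beta^*$ of it is a rank one filtered $\varphi_q$-isocrystal on $\widetilde X_{\widetilde U^p}$ with trivial filtration and $q$-Frobenius $1\otimes\varphi_{q,\scrO}$. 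Hence $\mathrm{isoc}(\pr_1^*\CF(\mathrm{k}))$ is $\alpha^*\mathscr{F}''(\mathrm{k})$ tensored with a rank one factor that changes neither the filtration nor the operator $\varphi_{q,\mathrm{k}}\otimes\varphi_{q,\scrO}$, and only twists the underlying local system by $\beta^*\bar\chi^{-(g-1)(w-2)}$.

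It then remains to identify $\alpha^*\mathscr{F}''(\mathrm{k})\otimes\beta^*\CF(\bar\chi^{-1})^{\otimes(g-1)(w-2)}$ with $\pr_1^*\mathscr{F}(\mathrm{k})$, which by construction is the filtered $\varphi_q$-isocrystal on $\widetilde X_{\widetilde U^p}$ attached to $V(\mathrm{k})$ regarded as a $\widetilde{\bar G}(\BQ)$-module via $\widetilde{\bar G}\to\bar G$. For this I would invoke Proposition \ref{prop:p-adic-unif}(\ref{it:p-adic-unif-b}) in the case $\sharp=\widetilde{\ }$, $\natural=''$: on $p$-adic uniformizations the morphism induced by $\alpha$ is induced by the identity on $\CH_{\widehat{F_\fp^\ur}}$ together with the group homomorphism $\alpha\colon\widetilde G\to G''$. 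Since $\alpha$ acts as the identity on $\CH_{\widehat{F_\fp^\ur}}$ and the scalars it introduces preserve the lines generated by $zX_\tau+Y_\tau$, the filtration on $\alpha^*\mathscr{F}''(\mathrm{k})$ is exactly the one generated by ``polynomials divided by $(zX_\tau+Y_\tau)^j$'', which coincides with that of $\pr_1^*\mathscr{F}(\mathrm{k})$; the connection is $1\otimes\mathrm{d}$ and the $q$-Frobenius restricts to $\varphi_{q,\mathrm{k}}$ on the polynomial part on both sides; and the residual $\bar\chi$-twist of the underlying local system is precisely the one recorded by (\ref{eq:iso-local-sys}), which turns $\alpha^*V(\mathrm{k})$ into $\pr_1^*V(\mathrm{k})$ (a direct computation with the norm characters reproducing the exponent $(g-1)(w-2)$). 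I expect the main obstacle to lie in this last reconciliation rather than in any isolated computation: matching the two $p$-adic uniformizations through Proposition \ref{prop:p-adic-unif}(\ref{it:p-adic-unif-b}), verifying that the $z$-filtration is insensitive to pre-composing with the scalar part of $\alpha$, and keeping the character twist and the exponent $(g-1)(w-2)$ aligned with (\ref{eq:iso-local-sys}). One should also take care to establish at the outset that $\mathrm{isoc}(-)$ really is a tensor functor compatible with these pullbacks of local systems on Shimura curves, since that is what licenses the first step.
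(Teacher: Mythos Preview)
Your proposal is correct and follows essentially the same route as the paper: apply the isocrystal functor to the tensor decomposition (\ref{eq:iso-local-sys}), feed in Corollary~\ref{cor:fil-mod-1} and Lemma~\ref{lem:fil-mod-1} for the two factors, and use Proposition~\ref{prop:p-adic-unif}(\ref{it:p-adic-unif-b}) with $\sharp=\widetilde{\ }$, $\natural=''$ to match $\alpha^*\mathscr{F}''(\mathrm{k})$ with $\pr_1^*\mathscr{F}(\mathrm{k})$. The paper's proof is terser but invokes exactly these three ingredients; your additional remarks about the tensor-functoriality of $\mathrm{isoc}(-)$ and the invariance of the $z$-filtration under the scalar part of $\alpha$ simply make explicit what the paper takes for granted.
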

\begin{proof} By
(\ref{eq:iso-local-sys}) the filtered $\varphi_q$-isocrystal
attached to $\pr_1^*\CF {(\mathrm{k})}$ is the tensor product of the
filtered $\varphi_q$-isocrystal attached to $\alpha^* \CF''
{(\mathrm{k})}$ and that attached to $\beta^*
\CF(\bar{\chi}^{-1})^{(g-1)(w-2)}$. Our conclusion follows from
Proposition \ref{prop:p-adic-unif} (\ref{it:p-adic-unif-b}) (in the
case of $\sharp=\widetilde{}$ and $\natural= ''$), Corollary
\ref{cor:fil-mod-1} and Lemma \ref{lem:fil-mod-1}.
\end{proof}

It is rather possible that the filtered $\varphi_q$-isocrystal
attached to $\CF(\mathrm{k})$ is $\mathscr{F}(\mathrm{k})$. But the
author does not know how to descent the conclusion of Corollary
\ref{cor:fil-isocry} to $X_{U^p}$.

\section{The de Rham cohomology} \label{sec:cover-hodge}

\subsection{Covering filtration and Hodge filtration for de Rham
cohomology} \label{ss:cover-hodge}

We fix an arithmetic Schottky group $\Gamma$ that is cocompact in
$\PGL(2, F_\fp)$. Then $\Gamma$ acts freely on $\CH$, and the
quotient $X_\Gamma=\Gamma\backslash \CH$ is the rigid analytic space
associated with a proper smooth curve over $F_\fp$. Here we write
$\CH$ for $\CH_{\widehat{F_\fp^\ur}}$.

We recall the theory of de Rham cohomology of local systems over
$X_\Gamma$ \cite{SchLoc, SchSt, deSh-1, deSh-2}.

We denote by $\widehat{\CH}$ the canonical formal model of $\CH$.
The curve $X_\Gamma$ has a canonical semistable module
$\CX_\Gamma=\Gamma\backslash \widehat{\CH}$; the special fiber
$\CX_{\Gamma,s}$ of $\CX_\Gamma$ is isomorphic to $\Gamma\backslash
\widehat{\CH}_s$.

The graph $\mathrm{Gr}(\CX_{\Gamma,s})$ is closely related to the
Bruhat-Tits tree $\CT$ for $\PGL(2,F_\fp)$. The group $\Gamma$ acts
freely on the tree $\CT$. Let $\CT_\Gamma$ denote the quotient tree.
The set of connected components of the special fiber
$\CX_{\Gamma,s}$ is in one-to-one correspondence to the set
$\mathrm{V}(\CT_\Gamma)$ of vertices of $\CT_\Gamma$, each component
being isomorphic to the projective line over $k(=$the resider field
of $F_\fp$). Write $\{\RP^1_v\}_{v\in \mathrm{V}(\CT_\Gamma)}$ for
the set of components of $\CX_{\Gamma,s}$. The singular points of
$\CX_{\Gamma,s}$ are ordinary $k$-rational double singular points;
they correspond to (unoriented) edges of $\CT_\Gamma$. Two
components $\RP^1_{u}$ and $\RP^1_v$ intersect if and only if $u$
and $v$ are adjacent; in this case, they intersect at a singular
point. For simplicity we will use the edge $e$ joining $u$ and $v$
to denote this singular point. There is a reduction map from
$X_\Gamma^\an$ to $\CX_{\Gamma,s}$. For a closed subset $U$ of
$\CX_{\Gamma,s}$ let $]U[$ denote the tube of $U$ in $X_\Gamma^\an$.
Then $\{]\RP^1_v[\}_{v\in \mathrm{V}(\CT_\Gamma)}$ is an admissible
covering of $X_\Gamma^\an$. Clearly $]\RP^1_{o(e)}[\cap
]\RP^1_{t(e)}[=]e[$.

Let $L$ be a field that splits $F_\fp$. Fix an embedding $\tau:
F_\fp\hookrightarrow L$. Let $V$ be an $L[\Gamma]$-module that comes
from an algebraic representation of $\PGL(2,F_\fp)$ of the form
$V(\mathrm{k})$ with $\mathrm{k}=(k_1,\cdots, k_g;2)$. We will
regard $V$ as an $F_\fp$-vector space by $\tau$. Let
$\mathscr{V}=\mathscr{V}(\mathrm{k})$ be the local system on
$X_\Gamma$ associated with $V$. Let
$H^*_{\dR,\tau}(X_\Gamma,\mathscr{V})$ be the hypercohomology of the
complex $\mathscr{V}\otimes_{\tau, F_\fp}
\Omega_{X_\Gamma}^\bullet$.

We consider the Mayer-Vietorus exact sequence attached to
$H^*_{\dR,\tau}(X_\Gamma, \mathscr{V})$ with respect to the
admissible covering $\{]\RP^1_v[\}_{v\in \mathrm{V}(\CT_\Gamma)}$.
As a result we obtain an injective map
$$ \iota: (\bigoplus_{e\in \mathrm{E}(\CT_\Gamma)}H^0_{\dR,\tau}(]e[,\mathscr{V}))^-/
\bigoplus_{v\in
\mathrm{V}(\CT_\Gamma)}H^0_{\dR,\tau}(]\RP^1_v[,\mathscr{V})
\hookrightarrow  H^1_{\dR,\tau}(X_\Gamma^\an, \mathscr{V}) .$$

As $]\RP^1_v[$ and $]e[$ are quasi-Stein, a simple computation shows
that $H^0_{\dR,\tau}(]\RP^1_v[,\mathscr{V})$ and
$H^0_{\dR,\tau}(]e[,\mathscr{V})$ are isomorphic to $V$. Let
$C^0(V)$ be the space of $V$-valued functions on $\mathrm{V}(\CT)$,
$C^1(V)$ the space of $V$-valued functions on $\mathrm{E}(\CT)$ such
that $f(e)=-f(\bar{e})$. Let $\Gamma$ act on $C^i(V)$ as $f\mapsto
\gamma \circ f\circ \gamma^{-1}$. Then we have a
$\Gamma$-equivariant short exact sequence
\begin{equation} \label{eq:cover-sq}
\xymatrix{ 0\ar[r] & V \ar[r] & C^0(V) \ar[r]^{\partial} & C^1(V)
\ar[r] & 0 }\end{equation} where $\partial(f)(e)=f(o(e))-f(t(e))$.
Observe that \begin{eqnarray*} \bigoplus_{v\in
\mathrm{V}(\CT_\Gamma)}H^0_{\dR,\tau}(]\RP^1_v[,\mathscr{V}) & \cong
& C^0(V)^\Gamma , \\ (\bigoplus_{e\in
\mathrm{E}(\CT_\Gamma)}H^0_{\dR,\tau}(]e[,\mathscr{V}))^- & \cong &
C^1(V)^\Gamma  \end{eqnarray*} and the map $$\bigoplus_{v\in
\mathrm{V}(\CT_\Gamma)}H^0_{\dR,\tau}(]\RP^1_v[,\mathscr{V})\rightarrow
(\bigoplus_{e\in
\mathrm{E}(\CT_\Gamma)}H^0_{\dR,\tau}(]e[,\mathscr{V}))^-$$
coincides with $\partial$. Thus $$(\bigoplus_{e\in
\mathrm{E}(\CT_\Gamma)}H^0_{\dR,\tau}(]e[,\mathscr{V}))^-/
\bigoplus_{v\in
\mathrm{V}(\CT_\Gamma)}H^0_{\dR,\tau}(]\RP^1_v[,\mathscr{V}) $$ is
isomorphic to $C^1(V)^\Gamma/\partial C^0(V)^\Gamma$. From
(\ref{eq:cover-sq}) we get the injective map $$\delta:
C^1(V)^\Gamma/\partial C^0(V)^\Gamma \hookrightarrow
H^1(\Gamma,V).$$

Let $C^1_\har(V)$ be the space of harmonic forms $$
C^1_\har(V):=\{f: \Edg(\CT)\rightarrow V | f(e)=-f(\bar{e}), \
\forall\ v, \sum_{t(e)=v} f(e)=0  \}.$$ Fixing some $v\in
\mathrm{V}(\CT)$, let $\epsilon$ be the map
$C^1_\har(V)^\Gamma\rightarrow H^1(\Gamma, V)$ \cite[(2.26)]{CMP}
defined by \begin{equation} \label{eq:sch}
c\mapsto (\gamma \mapsto
\sum_{e:v\rightarrow \gamma v}c(e)), \end{equation} where the sum
runs over the edges joining $v$ and $\gamma v$; $\epsilon$ does not
depend on the choice of $v$. By \cite[Appendix A]{CMP} $\epsilon$ is
minus the composition
$$C^1_\har(V)^\Gamma\rightarrow C^1(V)^\Gamma/\partial
C^0(V)^\Gamma\xrightarrow{\delta} H^1(\Gamma,V) ,$$ and is an
isomorphism. Combining this with the injectivity of $\delta$ we
obtain that both the natural map $C^1_\har(V)^\Gamma\rightarrow
C^1(V)^\Gamma/\partial C^0(V)^\Gamma$ and $\delta$ are isomorphisms.
Below, we will identify $C^1_\har(V)^\Gamma$ with
$C^1(V)^\Gamma/\partial C^0(V)^\Gamma$.

By \cite{deSh-2} we have
\begin{equation}\label{eq:second-diff-form}\begin{aligned}
H^1_{\dR,\tau}(X_\Gamma, \mathscr{V}) \cong \{ V &\text{-valued
differentials of second kind on }X_\Gamma\} / \\ & \{d  f | f \text{
a } V\text{-valued meromorphic function on }X_\Gamma\}
.\end{aligned}\end{equation} In loc. cit, de Shalit only considered
a special case, but his argument is valued for our general case. If
$\omega$ is a $\Gamma$-invariant $V$-valued differential of the
second kind on $\CH$, let $F_\omega$ be a primitive of it
\cite{deSh-1}, which is defined by Coleman's integral \cite{Col}.
\footnote{Precisely we choose a branch of Coleman's integral.}  Let
$P$ be the map
$$P: H^1_{\dR,\tau}(X_\Gamma, \mathscr{V})\rightarrow H^1(\Gamma, V) ,
\hskip 10pt \omega \mapsto (\gamma \mapsto
\gamma(F_\omega)-F_\omega).$$ Note that $P\circ \iota$ coincides
with $\delta$. Thus $P$ splits the inclusion $\iota\circ
\delta^{-1}:H^1(\Gamma,V)\rightarrow
H^1_{\dR,\tau}(X_\Gamma,\mathscr{V})$.

Let $I$ be the map $$ I: H^1_{\dR,\tau}(X_\Gamma,
\mathscr{V})\rightarrow C^1_\har(V)^\Gamma, \omega\mapsto (e\mapsto
\Res_e(\omega)).$$

Now, we suppose that $\Gamma$ is of the form in \cite[Appendix
A]{CMP}. We do not describe it precisely, but only point out that
$\Gamma_{i, 0}$ in Section \ref{ss:apply} is of this form.

\begin{prop}\label{prop:deShalit} We have an exact sequence called
the covering filtration exact sequence
$$ \xymatrix{ 0 \ar[r] & H^1(\Gamma, V)\ar[r]^{\iota\circ \delta^{-1}} &
H^1_{\dR,\tau}(X_\Gamma, \mathscr{V}) \ar[r]^{I} &
C^1_\har(V)^\Gamma\ar[r] & 0 . }
$$
\end{prop}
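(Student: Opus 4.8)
The plan is to assemble the covering filtration sequence from the two exact sequences already in hand, namely the Mayer--Vietoris sequence for the admissible covering $\{]\RP^1_v[\}_{v\in \mathrm{V}(\CT_\Gamma)}$ and the combinatorial sequence \eqref{eq:cover-sq}, together with the identification $H^0_{\dR,\tau}(]\RP^1_v[,\mathscr{V})\cong V\cong H^0_{\dR,\tau}(]e[,\mathscr{V})$ (which holds because the tubes $]\RP^1_v[$ and $]e[$ are quasi-Stein and the connection on $\mathscr{V}$ over them has constant horizontal sections). First I would recall from the discussion preceding the statement that the Mayer--Vietoris sequence produces the injection $\iota$ whose source is $\bigl(\bigoplus_e H^0_{\dR,\tau}(]e[,\mathscr{V})\bigr)^-/\bigoplus_v H^0_{\dR,\tau}(]\RP^1_v[,\mathscr{V})$, that this source is identified with $C^1(V)^\Gamma/\partial C^0(V)^\Gamma$ via the isomorphisms displayed above, and that the latter is in turn identified with $C^1_\har(V)^\Gamma$ by the result from \cite[Appendix A]{CMP} (the map $\epsilon$ being an isomorphism forces both $C^1_\har(V)^\Gamma\to C^1(V)^\Gamma/\partial C^0(V)^\Gamma$ and $\delta$ to be isomorphisms). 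So $\iota\circ\delta^{-1}: H^1(\Gamma,V)\hookrightarrow H^1_{\dR,\tau}(X_\Gamma,\mathscr{V})$ is injective, giving exactness on the left.

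Next I would identify the cokernel. The Mayer--Vietoris sequence continues
$$0\to \mathrm{coker}(\iota)\to \ker\Bigl(\bigoplus_v H^1_{\dR,\tau}(]\RP^1_v[,\mathscr{V})\to \bigl(\bigoplus_e H^1_{\dR,\tau}(]e[,\mathscr{V})\bigr)^+\Bigr)\to 0,$$
so it suffices to compute these $H^1$ terms. Here I would use that each $]\RP^1_v[$ is (an annulus-type space whose) de Rham cohomology of $\mathscr{V}$ is governed by the residues along the boundary annuli: more precisely, $H^1_{\dR,\tau}(]\RP^1_v[,\mathscr{V})\cong \bigl(\bigoplus_{t(e)=v}H^0_{\dR,\tau}(]e[,\mathscr{V})\bigr)_0$, the subspace of tuples summing to zero, with the map to $H^1_{\dR,\tau}(]e[,\mathscr{V})$ (which vanishes, since $]e[$ is an annulus and $H^1$ of a constant connection on an annulus over a quasi-Stein base is zero in the relevant weight range — one checks $H^1_{\dR}$ of $\mathscr V$ on $]e[$ vanishes because the polynomial coefficients have no obstruction to integration) being... — the upshot is that the kernel in question is exactly $\{f: \Edg(\CT_\Gamma)\to V\mid f(e)=-f(\bar e),\ \sum_{t(e)=v}f(e)=0\ \forall v\}=C^1_\har(V)^\Gamma$, and that the induced quotient map $H^1_{\dR,\tau}(X_\Gamma,\mathscr{V})\to C^1_\har(V)^\Gamma$ is precisely $\omega\mapsto(e\mapsto\Res_e\omega)$, i.e.\ the map $I$. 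This is where I would invoke Proposition \ref{prop:monodromy} / \cite[Section 4.1]{CI} for the description of the residue maps $\Res_e$ and the fact that they identify the second graded piece of Mayer--Vietoris with harmonic cochains.

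The main obstacle is the second step: verifying that the edge-wise $H^1$ terms vanish and that $H^1_{\dR,\tau}(]\RP^1_v[,\mathscr{V})$ really is the ``sum-zero'' part of the boundary residues, so that the kernel computes to $C^1_\har(V)^\Gamma$ and the quotient map is literally $I$. This requires a local computation of de Rham cohomology of the constant isocrystal $\mathscr{V}$ twisted by the filtration on each basic tube, of the kind carried out in \cite{CI} and \cite{deSh-2}; the weight hypothesis $\mathrm{k}=(k_1,\dots,k_g;2)$ and the quasi-Stein property are what make it go through. Once that identification of $\mathrm{coker}(\iota)$ with $(C^1_\har(V)^\Gamma, I)$ is in place, splicing it with the left-exactness already established yields the asserted four-term exact sequence, and compatibility of the middle map with $\iota\circ\delta^{-1}$ is automatic from the Mayer--Vietoris construction (one checks $P\circ\iota=\delta$ as noted, so $I\circ(\iota\circ\delta^{-1})=0$, confirming exactness in the middle). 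I would close by remarking that, granting the cited local results of Coleman--Iovita and de Shalit, the proof is essentially a diagram chase combining \eqref{eq:exact-sq}-type Mayer--Vietoris with \eqref{eq:cover-sq}.
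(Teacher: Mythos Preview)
Your approach diverges from the paper's and contains a concrete error. You claim that $H^1_{\dR,\tau}(]e[,\mathscr{V})$ vanishes ``because the polynomial coefficients have no obstruction to integration''. This is false: $]e[$ is an open annulus, and for the trivial connection on $V\otimes\mathcal{O}$ over an annulus one has $H^1_{\dR}\cong V$, generated by the classes $v\cdot\dfrac{dz}{z}$. Indeed, the very existence of the residue map $\Res_e:H^1_{\dR}(]e[,\mathscr{V})\to H^0_{\dR}(]e[,\mathscr{V})$ used throughout the paper witnesses that this group is nonzero. Consequently the Mayer--Vietoris cokernel $\ker\bigl(\bigoplus_v H^1_{\dR}(]\RP^1_v[,\mathscr{V})\to(\bigoplus_e H^1_{\dR}(]e[,\mathscr{V}))^+\bigr)$ is \emph{not} simply $\bigoplus_v H^1_{\dR}(]\RP^1_v[,\mathscr{V})$, and your identification of it with $C^1_\har(V)^\Gamma$ collapses. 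A direct Mayer--Vietoris computation along your lines is possible in principle, but it requires correctly identifying all the edge and vertex $H^1$'s and the restriction maps between them, which is considerably more delicate than you indicate.

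The paper avoids this entirely. It does not compute the Mayer--Vietoris cokernel; instead it proves that the map $\omega\mapsto(P(\omega),I(\omega))$ from $H^1_{\dR,\tau}(X_\Gamma,\mathscr{V})$ to $H^1(\Gamma,V)\oplus C^1_\har(V)^\Gamma$ is an isomorphism. Injectivity is shown directly via Coleman integration: if $I(\omega)=0$ then all residues vanish, so a Coleman primitive $F_\omega$ is meromorphic; if moreover $P(\omega)=0$ then $F_\omega$ can be taken $\Gamma$-invariant, whence $[\omega]=0$ by the description \eqref{eq:second-diff-form}. Surjectivity is obtained by a dimension count, combining $\dim H^1(\Gamma,V)=\dim C^1_\har(V)^\Gamma=\dim H^1(\Gamma,V^*)$ from \cite[Appendix~A]{CMP} with Schneider's formula $\dim H^1_{\dR,\tau}(X_\Gamma,\mathscr{V})=\dim H^1(\Gamma,V)+\dim H^1(\Gamma,V^*)$ from \cite[Theorem~1]{SchLoc}. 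The exact sequence then follows because $P$ splits $\iota\circ\delta^{-1}$.
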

\begin{proof} What we need to prove is that the map $$H^1_{\dR,\tau}(X_\Gamma,
\mathscr{V})\rightarrow H^1(\Gamma, V)\oplus
C^1_\har(V)^\Gamma\hskip 10pt \omega\mapsto (P(\omega), I(\omega))$$
is an isomorphism. When $V$ is the trivial module, this is already
proved in \cite{deSh-2}. So we assume that $V$ is not the trivial
module. First we prove the injectivity of the above map. For this we
only need to repeat the argument in \cite[Theorem 1.6]{deSh-2}. Let
$\omega$ be a $\Gamma$-invariant differential form of second kind on
$\CH$ such that $P([\omega])=I([\omega])=0$, where $[\omega]$
denotes the class of $\omega$ in $H^1_{\dR,\tau}(X_\Gamma,
\mathscr{V})$. Let $F_\omega$ be a primitive of $\omega$. As
$I(\omega)=0$, the residues of $\omega$ vanish, and thus $F_\omega$
is meromorphic. As $P(\omega)=0$, we may adjust $F_\omega$ by a
constant so that it is $\Gamma$-invariant. By
(\ref{eq:second-diff-form}) we have $[\omega]=0$. To show the
surjectivity we only need to compare the dimensions. By
\cite[Appendix A]{CMP} we have
$$\dim_{F_\fp}C^1_\har(V)^\Gamma=\dim_{F_\fp} H^1(\Gamma, V)$$ and
$$\dim_{F_\fp}H^1(\Gamma,V)=\dim_{F_\fp}H^1(\Gamma, V^*)$$ where
$V^*=\Hom_{F_\fp}(V, F_\fp)$ is the dual $F_\fp[\Gamma]$-module. By
\cite[Theorem 1]{SchLoc}  we have
$$\dim_{F_\fp}H^1_{\dR,\tau}(X_\Gamma,
\mathscr{V})=\dim_{F_\fp}H^1(\Gamma, V)+\dim_{F_\fp}H^1(\Gamma,
V^*).$$ Hence $$ \dim_{F_\fp}H^1_{\dR,\tau}(X_\Gamma,
\mathscr{V})=\dim_{F_\fp}C^1_\har(V)^\Gamma+\dim_{F_\fp} H^1(\Gamma,
V),
$$ as desired.
\end{proof}

We have also a Hodge filtration exact sequence
$$ \xymatrix{ 0 \ar[r] & H^0(X_\Gamma,  \mathscr{V}\otimes_{F_\fp,\tau}\Omega^1_{X_\Gamma}) \ar[r] & H^1_{\dR,\tau}(X_\Gamma, \mathscr{V})
\ar[r] & H^1(X_\Gamma,
\mathscr{V}\otimes_{F_\fp,\tau}\CO_{X_\Gamma})\ar[r] & 0. }
$$ This exact sequence and the covering filtration exact sequence
fit into the commutative diagram
\[
\xymatrix{ && 0\ar[d] &&
\\ && H^1(\Gamma, V)\ar[d]^{\iota\circ \delta^{-1}}\ar[rd]^{\simeq} && \\ 0 \ar[r] &
H^0(X_\Gamma,
\mathscr{V}\otimes_{F_\fp,\tau}\Omega^1_{X_\Gamma})\ar[rd]^{\simeq}
\ar[r] &
 H^1_{\dR,\tau}(X_\Gamma, \mathscr{V}) \ar[r]\ar[d]^{I} &
 H^1(X_\Gamma, \mathscr{V}\otimes_{F_\fp,\tau}\CO_{X_\Gamma}) \ar[r] & 0 .\\
 &&  C^1_\har(V)^\Gamma \ar[d] && \\ && 0 &&  } \]
The diagonal arrows are isomorphisms. Indeed, by \cite[Section
3]{deSh-1} the south-west arrow
is an isomorphism; 
one easily deduces from this that the north-east arrow is also an
isomorphism. In particular, we have a Hodge-like decomposition
\begin{equation} \label{eq:hodge-cover}
H^1_{\dR,\tau}(X_\Gamma, \mathscr{V}) = H^0(X_\Gamma,
\mathscr{V}\otimes_{F_\fp,\tau}\Omega^1_{X_\Gamma}) \bigoplus
\iota\circ \delta^{-1} ( H^1(\Gamma, V) ). \end{equation}

\subsection{De Rham cohomology of $\mathscr{F}(\mathrm{k})$}
\label{ss:apply}

Let $\bar{B}$ be as in Section \ref{ss:notations}. Write
$\widehat{\bar{B}}^\times :=(\bar{B}\bigotimes \BA_{ f})^\times$ and
$\widehat{\bar{B}}^{p,\times} :=(\bar{B}\bigotimes \BA_{
f}^p)^\times$. Let $U$ be a compact open subgroup of $G(\BA_f)$ that
is of the form $U_{p,0}U^p$. We identify $U^p=\prod\limits_{\fl\neq
\fp}U_\fl$ with a subgroup of $\widehat{\bar{B}}^{p,\times}$.

Write $\widehat{\bar{B}}^{p,\times} = \sqcup_{i=1}^{h}
\bar{B}^\times x_i U^p$. For $i=1,\cdots, h$, put
$$ \widetilde{\Gamma}_i = \{ \gamma\in \bar{B}^\times:  \gamma_\mathfrak{l}\in (x_i)_\mathfrak{l} U_\mathfrak{l} (x_i)_\mathfrak{l}^{-1}
\text{ for }\mathfrak{l}\neq \mathfrak{p} \}  .$$  Then $X_{U^p}$ is
isomorphic to
$$ \bar{B}^\times \backslash (\CH \times G(\BQ_p)/U_{p,0} \times \widehat{\bar{B}}^{p,\times}/U^p)
\cong \sqcup_{i=1}^h \widetilde{\Gamma}_i \backslash (\CH \times
\BZ) .
$$ Here we identify $\BZ$ with $G(\BQ_p)/U_{p,0}$. Note that
$\widetilde{\Gamma}_i$ acts transitively on $G(\BQ_p)/U_{p,0}$.

Note that, for every point in $\BZ=G(\BQ_p)/U_{p,0}$ it is fixed by
$\gamma\in \bar{B}^\times$ if and only if $\gamma_\fp$ is in
$\mathrm{GL}(2, \scrO_{F_\fp})$. Put \begin{eqnarray*}
\widetilde{\Gamma}_{i,0} &=& \{ \gamma\in \widetilde{\Gamma}_i:
|\det(\gamma_\mathfrak{p})|_\fp=1  \}
\\ &=& \{ \gamma\in \bar{B}^\times:  \gamma_\mathfrak{l}\in
(x_i)_\mathfrak{l} U_\mathfrak{l} (x_i)_\mathfrak{l}^{-1} \text{ for
}\mathfrak{l}\neq \mathfrak{p} \text{ and }
|\det(\gamma_\mathfrak{p})|_\fp=1  \} .
\end{eqnarray*} Let $\Gamma_{i,0}$
be the image of $\widetilde{\Gamma}_{i,0}$ in $\mathrm{PGL}(2,
F_\fp)$. Then we have an isomorphism \begin{equation}
\label{eq:decomp} X_{U^p} \cong \sqcup_{i=1}^h
 \Gamma_{i,0}\backslash \CH .\end{equation}

Applying the constructions in Section \ref{ss:cover-hodge} to each
part $\Gamma_{i,0}\backslash \CH$ of $X_{U^p}$, we obtain operators
$\iota$, $P$ and $I$.

\section{Automorphic Forms on totally definite quaternion algebras and Teitelbaum type
$L$-invariants} \label{sec:Teit-L-inv}

In this section we recall Chida, Mok and Park's definition of
Teitelbaum type $L$-invariant \cite{CMP}.

\subsection{Automorphic Forms on totally definite quaternion algebras}

We recall the theory of automorphic forms on totally definite
quaternion algebras.

Let $\bar{B}$ be as in Section \ref{sec:p-unif}, which is a totally
definite quaternion algebra over $F$. Let
$\Sigma=\prod_{\mathfrak{l}}\Sigma_{\mathfrak{l}}$ be a compact open
subgroup of $\widehat{\bar{B}}^\times$.

Let $\chi_{F,\cyc}:\BA_F^\times/F^\times\rightarrow \BZ_p^\times$ be
the Hecke character obtained by composing the cyclotomic character
$\chi_{\BQ,\cyc}: \BA_\BQ^\times/\BQ^\times\rightarrow \BZ_p^\times$
and the norm map from $\BA_F^\times$ to $\BA_\BQ^\times$.

\begin{defn} An {\it automorphic form} on $\bar{B}^\times$, of {\it
weight} $\mathrm{k}=(k_1,\cdots, k_g, w)$ and {\it level} $\Sigma$,
is a function $\mathbf{f}: \widehat{\bar{B}}^\times\rightarrow
V(\mathrm{k})$ that satisfies
$$ \mathbf{f}( z \gamma b u ) = \chi_{F,\cyc}^{2-w}(z) (u_p^{-1}\cdot \mathbf{f}(b)) $$
for all $\gamma\in \bar{B}^\times$, $u\in \Sigma$, $b\in
\widehat{\bar{B}}^\times$  and $z\in \widehat{F}^\times$. Denote by
$S^{\bar{B}}_{\mathrm{k}}(\Sigma)$ the space of such forms. Remark
that our $S^{\bar{B}}_{\mathrm{k}}(\Sigma)$ coincides with
$S^{\bar{B}}_{\mathrm{k}',\mathrm{v}}(\Sigma)$ for
$\mathrm{k}'=(k_1-2,\cdots, k_g-2)$ and
$\mathrm{v}=(\frac{w-k_1}{2}, \frac{w-k_2}{2}, \cdots,
\frac{w-k_g}{2})$ in \cite{CMP}.
\end{defn}

Observe that a form $\mathbf{f}$ of level $\Sigma$ is determined by
its values on the finite set $\bar{B}^\times\backslash
\widehat{\bar{B}}^\times/ \Sigma$. As in Section \ref{ss:apply}
write $\widehat{\bar{B}}^{\times} = \sqcup_{i=1}^{h} \bar{B}^\times
x_i \GL(2, F_\fp) \Sigma$; for $i=1,\cdots, h$, put
$$ \widetilde{\Gamma}_i = \{ \gamma\in \bar{B}^\times:  \gamma_\mathfrak{l}\in (x_i)_\mathfrak{l} \Sigma_\mathfrak{l} (x_i)_\mathfrak{l}^{-1}
\text{ for }\mathfrak{l}\neq \mathfrak{p} \}  .$$ Then we have a
bijection
$$ \sqcup_{i=1}^h \widetilde{\Gamma}_i\backslash \GL(2,F_\fp)/ \Sigma_\fp \xrightarrow{\sim} \bar{B}^\times \backslash \widehat{\bar{B}}^\times/\Sigma .
$$ The class of $g$ in $\widetilde{\Gamma}_i\backslash \GL(2,F_\fp)/ \Sigma_\fp$ corresponds to the class of $x_{i,\fp} g_\fp $ in
$\bar{B}^\times \backslash \widehat{\bar{B}}^\times/\Sigma$, where
 $g_\fp$ is the element of
$\widehat{\bar{B}}^\times$ that is equal to $g$ at the place $\fp$,
and equal to the identity at each other place. Using this we can
attach to an automorphic form $\mathbf{f}$ of weight $\mathrm{k}$
and level $\Sigma$ an $h$-tuple of functions $(f_1, \cdots, f_h)$ on
$\GL(2, F_\fp)$ with values in $V(\mathrm{k})$ defined by
$f_i(g)=\mathbf{f}(x_{i,\fp} g_\fp)$. The function $f_i$ satisfies
$$ f_i(\gamma_\fp g uz ) = \chi_{F,\cyc}^{2-w}(z) u^{-1}\cdot f_i(g)
$$ for all $\gamma_\fp \in \widetilde{\Gamma}_i$, $g\in
\GL(2,F_\fp)$,  $u\in \Sigma_\fp$ and $z\in F_\fp^\times$.

For each prime $\fl$ of $F$ such that $\bar{B}$ splits at $\fl$,
$\fl\neq \fp$, and $\Sigma_\fl$ is maximal, one define a Hecke
operator $T_\fl$ on $S^{\bar{B}}_{\mathrm{k}}(\Sigma)$ as follows.
Fix an isomorphism $\iota_\fl: B_\fl\rightarrow M_2(F_\fl)$ such
that $\Sigma_\fl$ becomes identified with
$\mathrm{GL}_2(\mathfrak{o}_{F_\fl})$. Let $\pi_\fl$ be a
uniformizer of $\mathfrak{o}_{F_\fl}$. Given a double coset
decomposition
$$ \mathrm{GL}_2(\mathfrak{o}_{F_\fl}) \wvec{1}{0}{0}{\pi_\fl} \mathrm{GL}_2(\mathfrak{o}_{F_\fl})
=\coprod b_i \mathrm{GL}_2(\mathfrak{o}_{F_\fl}) $$ we define the
Hecke operator $T_\fl$ on $S^{\bar{B}}_{\mathrm{k}}(\Sigma)$ by
$$ (T_\fl \mathbf{f})(b) =\sum_i \mathbf{f}(b b_i) . $$ We define $U_\fp$ similarly. Let
$\BT_{\Sigma}$ be the Hecke algebra generated by $U_\fp$ and these
$T_\fl$.

Denote by $\mathfrak{o}_F^{(\fp)}$ the ring of $\fp$-integers of $F$
and $(\mathfrak{o}_F^{(\fp)})^\times$ the group of $\fp$-units of
$F$. We have $\widetilde{\Gamma}_i\cap F^\times =
(\mathfrak{o}_F^{(\fp)})^\times$. For $i=1,\cdots, h$, put
$\Gamma_i=\widetilde{\Gamma}_i/(\mathfrak{o}_F^{(\fp)})^\times$.
Consider the following twisted action of $\widetilde{\Gamma_i}$ on
$V(\mathrm{k})$:
$$ \gamma \star v = |\mathrm{Nrd}_{\bar{B}/F}\gamma|_\fp^{\frac{w-2}{2}} \gamma_\fp \cdot v .$$
Then $(\mathfrak{o}_F^{(\fp)})^\times$ is trivial on
$V(\mathrm{k})$, so we may consider $V(\mathrm{k})$ as a
$\Gamma_i$-module via the above twisted action.

\subsection{Teitelbaum type $L$-invariants} \label{ss:Teitelbaum}

Chida, Mok and Park \cite{CMP} defined Teitelbaum type $L$-invariant
for automorphic forms $\mathbf{f}\in
S^{\bar{B}}_{\mathrm{k}}(\Sigma)$ satisfying the condition (CMP)
given in the introduction:
\begin{equation*}
\mathbf{f} \text{ is new at } \fp \text{ and } U_\fp \mathbf{f} =
\CN \fp ^{w/2} \mathbf{f}.
\end{equation*}

\noindent We recall their construction below.

We attach to each $f_i$ a $\Gamma_i$-invariant
$V(\mathrm{k})$-valued cocycle $c_{f_i}$, where $\Gamma_i$ acts on
$V(\mathrm{k})$ via $\star$. For $e=(s,t)\in \mathrm{E}(\CT)$,
represent $s$ and $t$ by lattices $L_s$ and $L_t$ such that $L_s$
contains $L_t$ with index $\CN \mathfrak{p}$. Let
$g_e\in\GL(2,F_\fp)$ be such that $g_e(\mathfrak{o}_{F_\fp}^2)=L_s$.
Then we define $c_{f_i}(e):=|\det(g)|_\fp^{\frac{w-2}{2}} g_e\star
f_i(g_e)$. If $\mathbf{f}$ satisfies (CMP), then $c_{f_i}$ is in
$C^1_\har(V(\mathrm{k}))^{\Gamma_i}$ \cite[Proposition 2.7]{CMP}.
Thus we obtain a vector of harmonic cocycles
$c_{\mathbf{f}}=(c_{f_1},\cdots, c_{f_h})$.

For each $c\in C^1_\har(V(\mathrm{k}))^{\Gamma_i}$ we define
$\kappa^{\mathrm{sch}}_c$ to be the following function on $\Gamma_i$
with values in $V(\mathrm{k})$: fixing some $v\in \mathrm{V}(\CT)$,
for each $\gamma\in\Gamma_i$, we put
$$ \kappa^{\mathrm{sch}}_c(\gamma) := \sum_{e:v\rightarrow \gamma v} c(e)
$$ where $e$ runs over the edges in the geodesic joining $v$ and $\gamma
v$. As $c$ is $\Gamma_i$-invariant, $\kappa_c^{\mathrm{sch}}$ is a
$1$-cocycle. Furthermore the class of $\kappa^{\mathrm{sch}}_c$ in
$H^1(\Gamma_i, V(\mathrm{k}))$ is independent of the choice of $v$.
Hence we obtain a map
$$ \kappa^{\mathrm{sch}}: \bigoplus_{i=1}^h C^1_\har(
V(\mathrm{k}))^{\Gamma_i}\rightarrow \bigoplus_{i=1}^h H^1(\Gamma_i,
V(\mathrm{k})) . $$ By \cite[Proposition 2.9]{CMP}
$\kappa^{\mathrm{sch}}$ is an isomorphism.

For each $\sigma: F_\fp\rightarrow L_\fP$, let $L_{\fP,\sigma}(k,v)$
be the dual of $V_\sigma(k,v)$ with the right action of $\GL(2,
F_\fp)$: if $g\in \GL(2,F_\fp)$, $P' \in L_{\fP, \sigma}(k,v)$ and
$P\in V_\sigma(k,v)$, then $ \langle P' , g\cdot P \rangle = \langle
P'|_g , P\rangle $. We realize $L_{\fP,\sigma}(k,v)$ by the same
space as $V_\sigma(k,v)$, with the pairing $$\langle
X_\sigma^jY_\sigma^{k-2-j}, X_\sigma^{j'}Y_\sigma^{k-2-j'} \rangle =
\left\{ \begin{array}{ll} 1 & \text{ if }j=j' \\ 0 & \text{ if
}j\neq j' \end{array}\right.$$ and the right $\GL(2, F_\fp)$-action
$$P|_\wvec{a}{b}{c}{d} = P( \sigma(a) X_\sigma + \sigma(b) Y_\sigma, \sigma(c) X_\sigma + \sigma(d) X_\sigma  ).
$$

Put
\begin{equation} \label{eq:L-sigma}
L_\fP(\mathrm{k})^\tau
:=\bigotimes_{\sigma\neq\tau}L_{\fP,\sigma}(k_\sigma,(w-k_\sigma)/2)
\end{equation}
with the right action of $\GL(2,F_\fp)$, where the tensor product is
taken over $L_\fP$.

For each harmonic cocycle $c\in C^1_\har( V(\mathrm{k}))^{\Gamma_i}$
the method of Amice-Velu and Vishik allows one to define the
$V(\mathrm{k})^\tau$-valued rigid analytic distribution
$\mu_{c}^\tau$ on $\RP^1(F_\fp)$ such that the value of $ \int_{U_e}
t^j \mu_{c}^\tau(t) \in V(\mathrm{k})^\tau$ ($0\leq j\leq k_\tau-2$)
satisfies
$$ \langle Q, \int_{U_e} t^j \mu_{c}^\tau(t)\rangle= \frac{ \langle X_\tau^j Y_\tau^{k_\tau-2-j}\otimes Q, c(e)\rangle}{\binc{k_\tau-2}{j}}   $$ for each $Q\in
L_\fP(\mathrm{k})^\tau$. \footnote{There are minors in the
definitions of $\mu_c^\tau$ and $\lambda^\tau_c$ in \cite{CMP}. See
\cite{Teit} Definition 6 and the paragraph before Proposition 9.}

Using $\mu_{c}^\tau$ we obtain a $V(\mathrm{k})^\tau$-valued rigid
analytic function $g^\tau_c$, precisely a global section of
$V(\mathrm{k})^\tau\otimes_{\tau, F_\fp}
\CO_{\CH,\widehat{F_\fp^\ur}}$ by
$$ g^\tau_{c}(z) =\int_{\RP^1(F_\fp)}\frac{1}{z-t} \mu_{c}^\tau(t)
$$ for $z\in \CH_{\widehat{F_\fp^\ur}}$. The function $g_c^\tau$ satisfies the
transformation property: for $\gamma\in \tilde{\Gamma}_i$, let
$\wvec{a}{b}{c}{d}$ be the image of $\gamma$ in $\bar{B}_\fp\cong
\GL(2,F_\fp)$; then
$$ g_c^\tau(\gamma\cdot z) = |\Nrd_{B/F}\gamma|_\fp^{\frac{w-2}{2}} (\Nrd_{B/F}\gamma )^{\frac{w-2-k_\tau}{2}}
 (cz+d)^{k_\tau} \gamma \cdot g_c^\tau(z) .
$$

Consider $V(\mathrm{k})$ as an $F_\fp$-module via $\tau:
F_\fp\hookrightarrow L_\fP$. We define the $V(\mathrm{k})$-valued
cocycle $\lambda^\tau_c$ as follows.  Fix a point $z_0\in \CH$. For
each $\gamma\in\Gamma_i$ the value $\lambda_c^\tau(\gamma)$ is given
by the formula: for $Q\in L_\fP(\mathrm{k})^\tau$,
$$ \langle X_\tau^jY_\tau^{k_\tau-2-j}\otimes Q, \lambda_c^\tau(\gamma)\rangle =  \binc{k_\tau-2}{j}\langle Q, \int_{z_0}^{\gamma z_0} z^j g_c^\tau(z)\mathrm{d}z \rangle
$$ $(0\leq j\leq k_\tau-2)$, where the integral is the branch of Coleman's integral chosen in Section
\ref{sec:cover-hodge}. Then $\lambda_c^\tau$ is a $1$-cocycle on
$\Gamma_i$, and the class of $\lambda_c^\tau$ in $H^1(\Gamma_i,
V(\mathrm{k}))$, denoted by $[\lambda_c^\tau]$, is independent of
the choice of $z_0$. This defines a map $$
\kappa^{\mathrm{col},\tau}: \bigoplus_{i=1}^h C^1_\har(\CT,
V(\mathrm{k}))^{\Gamma_i}\rightarrow \bigoplus_{i=1}^h H^1(\Gamma_i,
V(\mathrm{k})), \hskip 10pt (c_i)_i \mapsto
([\lambda_{c_i}^\tau])_i.
$$

As $\kappa^{\mathrm{sch}}$ is an isomorphism, for each $\tau$ there
exists a unique $\ell_\tau\in L_\fP$ such that
$$\kappa^{\mathrm{col},\tau}(c_{\mathbf{f}})=\ell_\tau
\kappa^{\mathrm{sch}}(c_{\mathbf{f}}).$$ The {\it Teitelbaum type
$L$-invariant} of $\mathbf{f}$, denoted by $\CL_T(\mathbf{f})$, is
defined to be the vector $(\ell_\tau)_\tau$ \cite[Section 3.2]{CMP}.
We also write $\CL_{T,\tau}(\mathbf{f})$ for $\ell_\tau$.

\section{Comparing $L$-invariants} \label{sec:compare}

Let $B$, $\bar{B}$, $G$ and $\bar{G}$ be as before. Let $\fn^-$ be
the conductor of $\bar{B}$. By our assumption on $\bar{B}$,
$\fp\nmid \fn^-$ and the conductor of $B$ is $\fp\fn^-$. Let $\fn^+$
be an ideal of $\mathfrak{o}_F$ that is prime to $\fp\fn^-$, and put
$\fn:=\fp\fn^+\fn^-$.

For any prime ideal $\fl$ of $\mathfrak{o}_F$, put
$$ \bar{R}_\fl := \left\{  \begin{array}{ll}\text{an maximal compact open subgroup of } \bar{B}_\fl^\times & \text{ if } \fl \text{ is prime to }
\fn, \\
 \text{the maximal compact open subgroup of }
\bar{B}_\fl^\times & \text{ if } \fl \text{ divides } \fn^-,
\\
1 + \text{ an Eichler order of } \bar{B}_\fl \text{ of level }
\fl^{\val_\fl(\fp\fn^+)} & \text{ if } \fl \text{ divides }\fp\fn^+.
 \end{array} \right. $$ Let $\bar{\Sigma}=\Sigma(\fp\fn^+,\fn^-)$ be the
level $\prod_\fl \bar{R}_\fl $. We write
$S^{\bar{B}}_{\mathrm{k}}(\fp\fn^+,\fn^-)$ for
$S^{\bar{B}}_{\mathrm{k}}(\Sigma(\fp\fn^+,\fn^-))$. Similarly we
define $\Sigma=\Sigma(\fn^+,\fp\fn^-)$, a compact open subgroup of
$G(\BA_f)$. Let $S_\mathrm{k}^B(\fn^+, \fp\fn^-)$ be the space of
modular forms on the Shimura curve $M$ of weight $\mathrm{k}$ and
level $\Sigma$.

Let $\mathrm{k}=(k_1,\cdots, k_g,w)$ be a multiweight such that
$k_1\equiv \cdots k_g\equiv w \mod 2$ and $k_1,\cdots ,k_g $ are all
even and larger than $2$. Let $f_\infty$ be a (Hilbert)  eigen cusp
newform of weight $\mathrm{k}$ and level $\fn$ that is new at
$\mathfrak{p}\mathfrak{n}^-$. Let $\mathbf{f}\in
S_{\mathrm{k}}^{\bar{B}}(\fp\fn^+, \fn^-)$ (resp. $f_B\in
S_{\mathrm{k}}^{B}(\fn^+, \fp\fn^-)$) be an eigen newform
corresponding to $f_\infty$ by the Jacquet-Langlands correspondence;
$\mathbf{f}$ (resp. $f_B$) is unique up to scalars.

We further assume that $\mathbf{f}$ satisfies (CMP), so that we can
attach to $\mathbf{f}$ the Teitelbaum type $L$-invariant
$\CL_T(\mathbf{f})$. We define $\CL_T(f_\infty)$ to be
$\CL_T(\mathbf{f})$. The goal of this section is to compare
$\CL_{FM}(f_\infty)$ and $\CL_T(f_\infty)$.

Let $L$ be a (sufficiently large) finite extension of $F$ that
splits $B$ and contains all Hecke eigenvalues acting on $f_\infty$.
Let $\lambda$ be an arbitrary place of $L$.

\begin{lem} \label{lem:saito}
\cite[Lemma 3.1]{Saito} There is an isomorphism
$$ H^1_{\mathrm{et}}(M_{\overline{F}}, \CF(\mathrm{k})_\lambda) \simeq
\bigoplus_{f'} \pi^\infty_{f', L(f')}\otimes_{L(f')}
(\bigoplus_{\lambda'|\lambda} \rho_{f', \lambda'})
$$ of representations of $G(\BA_f)\times \Gal(\overline{F}/F)$ over
$L_\lambda$. Here $f'$ runs through the conjugacy classes over $L$,
up to scalars, of eigen newforms of multiweight $\mathrm{k}$ that
are new at primes dividing $\fp\fn^-$. The extension of $L$
generated by the Hecke eigenvalues acting on $f'$ is denoted by
$L(f')$, and $\lambda'$ runs through places of $L(f')$ above
$\lambda$.
\end{lem}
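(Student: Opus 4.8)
The plan is to obtain the decomposition by combining the Matsushima-type description of the cohomology of the compact Shimura curve $M$ (it is proper because $B$, being ramified at $\fp$, is a division algebra) with Carayol's construction of the attached Galois representations \cite{Car2} and Saito's results \cite{Saito}. First I would record the two commuting actions on $H^1_{\mathrm{et}}(M_{\overline{F}},\CF(\mathrm{k})_\lambda)$: the smooth Hecke action of $G(\BA_f)$ and the continuous action of $\Gal(\overline{F}/F)$ (recall $M$ is defined over $F$). Since every $k_i$ is $>2$, the algebraic representation $\rho^{(\mathrm{k})}$ has no nonzero $G^\der$-invariants, so $H^0$ and $H^2$ of $M_{\overline{F}}$ with coefficients in $\CF(\mathrm{k})_\lambda$ vanish and $H^1_{\mathrm{et}}$ is the only nonzero cohomology; it is an admissible $G(\BA_f)$-module and, after extending scalars to $\overline{L_\lambda}$, it is $G(\BA_f)$-semisimple, so that
$$ H^1_{\mathrm{et}}(M_{\overline{F}},\CF(\mathrm{k})_\lambda)\otimes_{L_\lambda}\overline{L_\lambda}\;\cong\;\bigoplus_{\pi}\pi^\infty\otimes_{\overline{L_\lambda}}M_\pi,\qquad M_\pi:=\Hom_{G(\BA_f)}\big(\pi^\infty,\,H^1_{\mathrm{et}}\otimes_{L_\lambda}\overline{L_\lambda}\big), $$
where $\pi=\pi_\infty\otimes\pi^\infty$ runs over the cuspidal automorphic representations of $G(\BA)$ whose archimedean component is cohomological for $\rho^{(\mathrm{k})}$ (holomorphic discrete series $D_{k_1}$ at $\tau_1$, and the corresponding finite-dimensional type at $\tau_2,\dots,\tau_g$), and each $M_\pi$ is a continuous $\overline{L_\lambda}$-linear $\Gal(\overline{F}/F)$-representation.

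Next I would transport the index set to $\GL(2)/F$ via the Jacquet--Langlands correspondence. Each such $\pi$ is the transfer of a cuspidal automorphic representation $\pi'$ of $\GL(2,\BA_F)$ with $\pi'_{\tau_i}=D_{k_i}$, and the ramification of $B$ (at $\fp$ and at the finite places dividing $\fn^-$) forces $\pi'_v$ to be discrete series for every $v\mid\fp\fn^-$; equivalently $\pi'=\pi_{f'}$ for a Hilbert eigen newform $f'$ of multiweight $\mathrm{k}$ that is new at all primes dividing $\fp\fn^-$, and conversely every such $f'$ occurs. Strong multiplicity one for $\GL(2)$, together with the injectivity of Jacquet--Langlands, shows that each $\pi^\infty$ appears with multiplicity one and that, after regrouping the $\pi$ attached to a fixed $f'$, one may descend $\pi^\infty$ to $L(f')$ as $\pi^\infty_{f',L(f')}$. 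Descending the whole identity from $\overline{L_\lambda}$ to $L_\lambda$, the contribution of $f'$ is then $\pi^\infty_{f',L(f')}\otimes_{L(f')}M_{f'}$, where $M_{f'}$ is $\Gal(\overline{F}/F)$-stable and free of rank $2$ over $L(f')\otimes_L L_\lambda\cong\prod_{\lambda'\mid\lambda}L(f')_{\lambda'}$.

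The \emph{heart of the matter} is then to identify the $\lambda'$-component of $M_{f'}$ with Carayol's representation $\rho_{f',\lambda'}$ of Theorem \ref{thm:car}. By construction $\rho_{f',\lambda'}$ is the $2$-dimensional $f'$-isotypic Galois summand cut out of $H^1_{\mathrm{et}}(M_{\overline{F}},\CF(\mathrm{k}))$, and it is determined up to Frobenius-semisimplification by the local compatibilities $'\rho_{f',\lambda',v}^{\mathrm{F}\text{-}\mathrm{ss}}\simeq\check{\sigma}(\pi_{f',v})\otimes_L L_{\lambda'}$ at finite $v\nmid\lambda$; since the $\lambda'$-component of $M_{f'}$ carries the same Hecke eigensystem and has $L(f')_{\lambda'}$-rank exactly $2$, it must equal $\rho_{f',\lambda'}$. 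Summing over $\lambda'\mid\lambda$ produces the factor $\bigoplus_{\lambda'\mid\lambda}\rho_{f',\lambda'}$, and $G(\BA_f)\times\Gal(\overline{F}/F)$-equivariance is automatic since every step is an isomorphism of Galois modules compatible with Hecke. Assembling over all $f'$ yields the stated isomorphism.

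I expect the real work to lie in the normalization bookkeeping of the third paragraph: matching the Tate twists implicit in $\CF(\mathrm{k})$ and in $\rho^{(\mathrm{k})}$ with those in Carayol's $\rho_{f',\lambda'}$, tracking the dual $\check{\sigma}$ versus $\sigma$, and making precise that the ramification of $B$ at $\fp\fn^-$ selects exactly the $f'$ new at $\fp\fn^-$ while excluding any contribution from $H^0$, $H^2$, one-dimensional automorphic representations, or the non-tempered spectrum. All of this is contained in \cite{Car, Car2} and \cite{Saito}; accordingly, in the write-up I would reduce the statement to those references after verifying that the present hypotheses --- in particular that the relevant $f'$ are new at $\fp\fn^-$ --- are precisely what is needed to invoke them.
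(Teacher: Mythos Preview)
The paper does not supply a proof of this lemma at all: it is quoted verbatim as \cite[Lemma 3.1]{Saito} and then used. Your outline is a correct and standard sketch of how that result is established in the cited literature (Matsushima-type decomposition of the cohomology of the compact Shimura curve, Jacquet--Langlands transfer to identify the contributing $\pi$ with Hilbert newforms new at $\fp\fn^-$, and Carayol's identification of the Galois piece), so there is no discrepancy to flag beyond the fact that the paper simply defers to \cite{Saito}. One minor remark: your appeal to ``every $k_i>2$'' to kill $H^0$ and $H^2$ is an extra hypothesis not stated in the lemma itself, though it does hold in the context of Section~\ref{sec:compare} where the lemma is invoked; for general weights one instead observes that any $H^0$/$H^2$ contributions come from one-dimensional automorphic representations and hence do not interfere with the cuspidal decomposition on the right-hand side.
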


By the strong multiplicity one theorem (cf. \cite{PS}) there exists
a primitive idempotent $e_{\mathbf{f}}\in \BT_{\bar{\Sigma}}$ such
that $e_{\mathbf{f}}\BT_{\bar{\Sigma}}=Le_{\mathbf{f}}$ and
$e_{\mathbf{f}}\cdot
S_{\mathrm{k}}^{\bar{B}}(\Sigma(\fp\fn^+,\fn^-)) = L\cdot
\mathbf{f}$. Lemma \ref{lem:saito} tells us that
$e_{\mathbf{f}}\cdot H^1_{\mathrm{et}}(M_{\overline{F}},
\CF(\mathrm{k})_\lambda)^{\Sigma}$ is exactly $\rho_{f_B,\lambda}$,
the $\lambda$-adic representation of $\Gal(\overline{F}/F)$ attached
to $f_B$. By Carayol's construction of $\rho_{f_\infty,\lambda}$
\cite{Car2}  $\rho_{f_\infty,\lambda}$ coincides with
$\rho_{f_B,\lambda}$.

Now we take $\lambda$ to be a place above $\fp$, denoted by $\fP$.

Recall that in Section \ref{ss:apply} and Section
\ref{sec:Teit-L-inv} we associate to $\bar{\Sigma}$ the groups
$\widetilde{\Gamma}_{i,0}, \widetilde{\Gamma}_i,\Gamma_i,
\Gamma_{i,0}$ ($i=1,\cdots,h$). By (\ref{eq:decomp}) $X_{\Sigma}$ is
isomorphic to $\coprod_{i} X_{\Gamma_{i,0}}$, where
$X_{\Gamma_{i,0}}=\Gamma_{i,0}\backslash \CH_{\widehat{F_\fp^\ur}}$.

\begin{thm} \label{thm:semistable}
Let $f_B$ be as above. Then $\rho_{f_B, \fP, \fp}$ is a semistable
$($non-crystalline$)$ representation of
$\Gal(\overline{F}_\fp/F_\fp)$, and the filtered
$(\varphi_q,N)$-module $D_{\st, F_\fp}(\rho_{f_B,\fP,\fp})$ is a
monodromy $L_\fP$-module.
\end{thm}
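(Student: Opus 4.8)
The plan is to realize $\rho_{f_B,\fP,\fp}$ inside the \'etale cohomology of a local system on a Shimura curve admitting $p$-adic uniformization by the Drinfeld half-plane, transport the situation to the de Rham side through Faltings's comparison theorem (Proposition \ref{prop:semistable}), and there read off semistability, non-crystallinity and the three axioms of a monodromy module from the explicit filtered $\varphi_q$-isocrystal of Corollary \ref{cor:fil-isocry} together with the covering and Hodge filtrations of Section \ref{sec:cover-hodge} and Coleman--Iovita's formula for $N$ (Proposition \ref{prop:monodromy}). Recall from the discussion preceding the statement that $\rho_{f_B,\fP}=e_{\mathbf f}\cdot H^1_{\mathrm{et}}(M_{\overline F},\CF(\mathrm k)_{\fP})^{\Sigma}$; using the fixed embedding $E\hookrightarrow F_{\fp}$ of Section \ref{ss:notations} and the identification $E_{\fP}=F_{\fp}$ (as $\fp$ splits in $E$), restricting to $G_{F_{\fp}}$ amounts to working over $F_{\fp}$. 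Since the filtered $\varphi_q$-isocrystal attached to $\CF(\mathrm k)$ on $M$ itself is not known (cf.\ the remark after Corollary \ref{cor:fil-isocry}), I would first pass to $\widetilde M=M\times N_E$: because $N_E$ is finite over $E$ and, by the argument in the proof of Lemma \ref{lem:fil-mod-1}, has its geometric points defined over an extension unramified at $\fp$, the projection formula gives, as $G_{F_{\fp}}$-representations, $H^1_{\mathrm{et}}(\widetilde M_{\overline{F_\fp}},\pr_1^{*}\CF(\mathrm k)_{\fP})\simeq H^1_{\mathrm{et}}(M_{\overline{F_\fp}},\CF(\mathrm k)_{\fP})\otimes_{\BQ_p}H^0_{\mathrm{et}}(N_{E,\overline{F_\fp}},\BQ_p)$, the second factor being a direct sum of unramified (hence crystalline) characters containing the trivial one as a direct summand.

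Next I would invoke the $p$-adic uniformization. By Proposition \ref{prop:p-adic-unif}(\ref{it:p-adic-unif-a}) applied with $\natural=\widetilde{}$, together with an argument parallel to \eqref{eq:decomp}, the curve $\widetilde M$ at a level compatible with $\Sigma$, base-changed to $\widehat{F_{\fp}^{\ur}}$, is a finite disjoint union of quotients $\Gamma'_j\backslash\CH_{\widehat{F_{\fp}^{\ur}}}$ by arithmetic Schottky groups of the kind for which the constructions of Section \ref{sec:cover-hodge} apply, hence carries the canonical semistable model $\Gamma'_j\backslash\widehat\CH$, which is regular with smooth special components; moreover by Lemma \ref{lem:geom-conn}(\ref{it:geom-conn}) each geometric component already descends to a finite unramified extension $K$ of $F_{\fp}$, over which one has such a model, and both semistability and the full structure of $D_{\st,F_{\fp}}$ are insensitive to the finite unramified base change $K/F_{\fp}$. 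By Corollary \ref{cor:fil-isocry}, $\pr_1^{*}\CF(\mathrm k)$ is the local system attached to the filtered $\varphi_q$-isocrystal $\pr_1^{*}\mathscr F(\mathrm k)$, whose restriction to each $\Gamma'_j\backslash\CH$ is, up to the unramified twist coming from $\beta^{*}\CF(\bar\chi)$, the object treated in Section \ref{sec:cover-hodge}. Proposition \ref{prop:semistable} (applied over the $K$'s and descended) then shows $H^1_{\mathrm{et}}(\widetilde M_{\overline{F_\fp}},\pr_1^{*}\CF(\mathrm k)_{\fP})$ is semistable with $D_{\st,F_{\fp}}$ canonically $\bigoplus_j H^1_{\dR}(\Gamma'_j\backslash\CH,\pr_1^{*}\mathscr F(\mathrm k))$. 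Combining this with the previous paragraph, and using that semistable representations are stable under direct summands and unramified twists, $\rho_{f_B,\fP,\fp}$ is semistable; furthermore, since $e_{\mathbf f}$, the Hecke operators and the projection onto $\Sigma$-invariants are $F_{\fp}$-linear and commute with $\varphi_q$, $N$ and $\Fil^\bullet$, applying them and discarding the nontrivial characters in $H^0(N_E)$ identifies $D:=D_{\st,F_{\fp}}(\rho_{f_B,\fP,\fp})$, compatibly with all these structures, with the corresponding Hecke eigenpiece of $\bigoplus_j H^1_{\dR}(\Gamma'_j\backslash\CH,\mathscr F(\mathrm k))$. As $\rho_{f_B,\fP}$ has $L_{\fP}$-rank $2$ and $L_{\fP}\supseteq F_{\fp}$ splits $F_{\fp}$, the module $D$ is free of rank $2$ over $T_{F_{\fp}}=L_{\fP}\otimes_{\BQ_p}F_{\fp}\cong\bigoplus_{\sigma}L_{\fP}$ — the first axiom.

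It remains to verify the axioms involving $N$ and the filtration, which I would check componentwise along $\sigma\colon F_{\fp}\hookrightarrow L_{\fP}$, writing $D=\bigoplus_\sigma D_\sigma$ with $D_\sigma$ a rank-$2$ $L_{\fP}$-subspace of $\bigoplus_j H^1_{\dR,\sigma}(\Gamma'_j\backslash\CH,\mathscr V)$, $\mathscr V$ as in Section \ref{sec:cover-hodge}. Since $f_B$ transfers from a quaternion algebra ramified at $\fp$, the Jacquet--Langlands correspondence forces $\pi_{f_\infty,\fp}$ into the discrete series, and as its conductor is exactly $\fp$ it is an unramified twist of the Steinberg representation, so $\check\sigma(\pi_{f_\infty,\fp})$ is special; by Theorem \ref{thm:car} together with Saito's potential semistability \cite{Saito}, the monodromy $N$ on $D$ is nonzero in every $\sigma$-component, whence $\rho_{f_B,\fP,\fp}$ is non-crystalline. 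By Proposition \ref{prop:monodromy} combined with the covering filtration exact sequence (Proposition \ref{prop:deShalit}), on each $H^1_{\dR,\sigma}(\Gamma'_j\backslash\CH,\mathscr V)$ the operator $N$ is the composite of the residue map $I$ with $\iota\circ\delta^{-1}$, so $\ker N=\ker I=\iota\circ\delta^{-1}\bigl(H^1(\Gamma'_j,V)\bigr)=\im N$; restricting to the rank-$2$ piece $D_\sigma$, the non-vanishing of $N_\sigma$ forces $\ker N_\sigma=\im N_\sigma$ to be free of rank $1$, so $D\xrightarrow{N}D\xrightarrow{N}D$ is exact. Finally, the two Hodge--Tate weights of $\rho_{f_B,\fP}$ in the $\sigma$-component differ by $k_\sigma-1\ge 1$ and sum to $w-1$, so there is an integer $j_0$ — e.g.\ $j_0=w/2$ (an integer, as $w$ is even), which lies in the range where $\Fil^{j_0}D_\sigma$ has rank $1$ for every $\sigma$ since $k_\sigma\ge 2$; hence $\Fil^{j_0}D$ is free of rank $1$ over $T_{F_{\fp}}$, and $\Fil^{j_0}D_\sigma$, being the top nonzero step of the filtration on $D_\sigma$, is identified — via the explicit filtration of $\mathscr F(\mathrm k)$ in Corollary \ref{cor:fil-isocry} and the Hodge filtration exact sequence — with the summand $H^0(\Gamma'_j\backslash\CH,\mathscr V\otimes_{\sigma}\Omega^1)$ cut out by the $f_B$-eigenidempotent. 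By the Hodge-like decomposition \eqref{eq:hodge-cover} this summand is complementary to $\iota\circ\delta^{-1}(H^1(\Gamma'_j,V))=\ker N_\sigma$, so $\Fil^{j_0}D_\sigma\cap\ker N_\sigma=0$ for every $\sigma$ and therefore $\Fil^{j_0}D\cap\ker N=0$. Thus $D$ is a $2$-dimensional monodromy $L_{\fP}$-module.

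The main obstacle is the bookkeeping concentrated in the middle steps: transporting the explicit filtered $\varphi_q$-isocrystal — which is only computed on $\widetilde M$ — back to the Hecke eigenspace on $M$ through the K\"unneth factor $H^0(N_E)$, while simultaneously descending from $\widehat{F_{\fp}^{\ur}}$ and from the unramified fields of definition of the geometric components down to $F_{\fp}$, and checking that every identification respects $\varphi_q$, $N$ and $\Fil^\bullet$; the one genuinely non-formal ingredient beyond this is the non-vanishing of $N$ (i.e.\ non-crystallinity), which is forced by the ramification of $B$ at $\fp$ through local Langlands and Theorem \ref{thm:car}.
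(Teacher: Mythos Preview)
Your overall strategy coincides with the paper's: realize $\rho_{f_B,\fP,\fp}$ in \'etale cohomology, pass to $\widetilde M=M\times N_E$ where the filtered $\varphi_q$-isocrystal is known (Corollary \ref{cor:fil-isocry}), use the K\"unneth splitting with $H^0(N_E)$, apply Faltings's comparison, and read off $N$ from Coleman--Iovita together with the covering/Hodge filtrations of Section \ref{sec:cover-hodge}. The paper additionally twists to $w=2$ so that the $\Gamma_i$-module $V(\mathrm k)$ of Section \ref{sec:cover-hodge} agrees with the twisted module of Section \ref{sec:Teit-L-inv}.

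Where you diverge is in the verification of the third axiom and non-crystallinity. Rather than arguing abstractly that $\Fil^{j_0}D_\sigma$ lands in the $H^0(\mathscr V\otimes_\sigma\Omega^1)$-summand, the paper constructs an explicit nonzero element
\[
\omega_{\mathbf f}^\tau=\bigl(g_i^\tau(z)\,(zX_\tau+Y_\tau)^{k_\tau-2}\,\rd z\bigr)_{1\le i\le h}
\]
in the $\mathbf f$-eigenpiece, where $g_i^\tau$ is the rigid analytic function from the Poisson-kernel construction of Section \ref{ss:Teitelbaum}. The factor $(zX_\tau+Y_\tau)^{k_\tau-2}$ places it in the top filtration step by the formula in Section \ref{ss:compute}; it visibly lies in $H^0(X,\mathscr V\otimes_\tau\Omega^1)$; and a duality argument with the pairing on $V(\mathrm k)$ shows the eigenpiece of the filtration is exactly $L_\fP\cdot\omega_{\mathbf f}^\tau$. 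Then \eqref{eq:hodge-cover} gives $\Fil^{j_0}\cap\ker N=0$ and $N\neq 0$ simultaneously, with no appeal to local--global compatibility. This is not merely a stylistic choice: the element $\omega_{\mathbf f}^\tau$ is reused immediately in Lemma \ref{lem:PI-comp-kappa} and the proof of Theorem \ref{thm:main-text}, where $P^\tau(\omega_{\mathbf f}^\tau)$ and $I^\tau(\omega_{\mathbf f}^\tau)$ are computed and compared.

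One genuine gap in your version: you deduce $N\neq 0$ from Theorem \ref{thm:car}, but that theorem is stated only for $\fp\nmid\ell$, and here $\fp\mid p=\ell$. What you need is local--global compatibility at $p$; this is indeed the main theorem of \cite{Saito}, but it is not the statement recorded in the paper after Theorem \ref{thm:car} (which only asserts potential semistability). The paper sidesteps this by getting $N\neq 0$ directly from the explicit $\omega_{\mathbf f}^\tau$. Your abstract route can be repaired by citing Saito's compatibility result rather than Theorem \ref{thm:car}.
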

\begin{proof}
To show that $\rho_{f_B, \fP, \fp}$ is semistable, we only need to
prove that $H^1_{\mathrm{et}}((X_{\Sigma})_{{\bar{F}}_\fp},
\CF(\mathrm{k}))$ is semistable, since $\rho_{f_B, \fP, \fp}$ is a
subrepresentation of
$H^1_{\mathrm{et}}((X_{\Sigma})_{{\bar{F}}_\fp}, \CF(\mathrm{k}))$.
But this follows from Proposition \ref{prop:semistable} and the fact
that $X_{\Sigma}$ is semistable.

Next we prove that $D_{\st, F_\fp}(\rho_{f_B,\fP,\fp})$ is a
monodromy $L_\fP$-module. We only need to consider $D_{\st,
 {\widehat{F_\fp^\ur}}}(\rho_{f_B,\fP,\fp})$ instead.

Twisting $f_B$ by a central character we may assume that $w=2$.

Being a Shimura variety, $N_{E}$ is a family of varieties. But in
the following we will use $N_E$ to denote any one in this family
that corresponds to a level subgroup whose $\fp$-factor is
$\scrO_{E_\fp}^\times$. By the proof of Lemma \ref{lem:fil-mod-1},
any geometric point of $(N_E)_{ \widehat{F_\fp^\ur}}$ is defined
over $\widehat{F_\fp^\ur}$. In other words, $(N_E)_{
\widehat{F_\fp^\ur}}$ is the product of several copies of $\Spec(
\widehat{F_\fp^\ur})$.

Let $\pr_1$ be the projection $X_{\Sigma}\times (N_E)_{\widehat
{F_\fp^\ur}}\rightarrow X_{\Sigma}$. By Corollary
\ref{cor:fil-isocry} the filtered $\varphi_q$-isocrystal attached to
$\pr_1^*\mathcal{F}(\mathrm{k})$ is
$\pr_1^*\mathscr{F}(\mathrm{k})$. Note that
$$ H^1_{\mathrm{et}}((X_{\Sigma}\times
(N_E)_{ \widehat{F_\fp^\ur}})_{\widehat{\overline{F}}_\fp},
\pr_1^*\mathcal{F}(\mathrm{k}))=
H^0_{\mathrm{et}}((N_{E})_{\widehat{\overline{F}}_\fp},\BQ_p)\otimes_{\BQ_p}
H^1_{\mathrm{et}}((X_{\Sigma})_{\widehat{\overline{F}}_\fp},
\mathcal{F}(\mathrm{k})) . $$ The
$\Gal(\overline{F}_\fp/F_\fp^\ur)$-representation
$H^0_{\mathrm{et}}((N_{E})_{\widehat{\overline{F}}_\fp},\BQ_p)$ is
crystalline and the associated filtered $\varphi_q$-module is
$H^0_\dR((N_E)_{ \widehat{F^\ur_\fp}},\BQ_p)$ with trivial
filtration. Let $H^0$ denote this filtered $\varphi_q$-module for
simplicity. As a consequence, we have an isomorphism of filtered
$(\varphi_q,N)$-modules
\begin{equation} H^0\otimes _{\BQ_p} D_{\st,
\widehat{F_\fp^\ur}}(H^1_{\mathrm{et}}(
(X_{\Sigma})_{\widehat{\overline{F}}_\fp}, \mathcal{F}(\mathrm{k})))
= H^0 \otimes _{\BQ_p} H^1_\dR( X_{\Sigma} , \mathscr{F}(\mathrm{k})
) .
\end{equation}

Using the decomposition (\ref{eq:decom-isocrystal}), for each
embedding $\tau:F_\fp\hookrightarrow L_\fP$ we put
$$ H^1_{\dR,\tau} (
X_{\Sigma} , \mathscr{F}(\mathrm{k}) ) := \mathbb{H}^1(X_{\Sigma} ,
\CV(\mathrm{k})\otimes_{\tau, F_\fp } \Omega^\bullet_{X_{\Sigma}} ).
$$ In Section \ref{ss:Teitelbaum} we attached to
$\mathbf{f}=(f_1,\cdots, f_h)$ an $h$-tuple
$g^\tau=(g_1^\tau,\cdots, g_h^\tau)$.  Let $M_{\tau}(\mathbf{f})$
denote the $L_\fP$-subspace of $\bigoplus_i
H^1_{\dR,\tau}(X_{\Gamma_{i,0}}, \mathscr{F}(\mathrm{k}))$ generated
by the element
$$\omega_{\mathbf{f}}^\tau = \Big( g^\tau_i(z)(z X_\tau +
Y_\tau)^{k_\tau-2} \rd z  \Big)_{1\leq i\leq h}.$$ 
(Note that, when $w=2$, the twisted action $\star$ in Section
\ref{sec:Teit-L-inv} coincides with the original action.) Therefore,
we have
$$ e_\mathbf{f} \cdot \Fil^{\frac{w+k_\tau}{2}-2}H^1_{\dR,\tau}(X_{\Sigma}, \mathscr{F}(\mathrm{k}))
\supseteq M_\tau(\mathbf{f}). $$

Consider the pairing $<\cdot,\cdot>$ on $V(\mathrm{k})$ defined by
$$<Q_1,Q_2> \ = \text{the coefficient of }\prod_\sigma (X_\sigma
Y_\sigma)^{k_\sigma-2} \text{ in } Q_1Q_2.$$ It is perfect  and
induces a perfect pairing on $H^1_{\dR,\tau}(X_{\Sigma},
\mathscr{F}(\mathrm{k}))$. With respect to this pairing
$\Fil^{\frac{w-k_\tau}{2}+1} H^1_{\dR,\tau}(X_{\Sigma},
\mathscr{F}(\mathrm{k}))$ is orthogonal to
$\Fil^{\frac{w+k_\tau}{2}-2} H^1_{\dR,\tau}(X_{\Sigma},
\mathscr{F}(\mathrm{k}))$. As $e_\mathbf{f} \cdot
H^1_{\dR,\tau}(X_{\Sigma}, \mathscr{F}(\mathrm{k}) )$ is of rank $2$
over $L_\fP$, we obtain
$$ e_\mathbf{f} \cdot \Fil^{\frac{w-k_\tau}{2}+1}H^1_{\dR,\tau}(X_{\Sigma}, \mathscr{F}(\mathrm{k}))=
e_\mathbf{f} \cdot
\Fil^{\frac{w+k_\tau}{2}-2}H^1_{\dR,\tau}(X_{\Sigma},
\mathscr{F}(\mathrm{k}) ) = M_\tau(\mathbf{f}). $$ Thus
$$H^0\otimes_{\BQ_p}\pr_1^* M_\tau(\mathbf{f}) = H^0\otimes_{\BQ_p}\Fil^{\frac{w+\min_\tau\{ k_\tau\}}{2}-2} D_{\st,
\widehat{F_\fp^\ur}}(\rho_{f_B,\fP,\fp})_\tau.$$

Let $\iota^\tau$ and $I^\tau$ be the operators attached to the sheaf
$\mathscr{F}(\mathrm{k})$ over $X_{\Sigma}$ (see Section
\ref{ss:apply}). Here, the superscript $\tau$ is used to emphasize
the embedding $\tau:F_\fp\hookrightarrow L_\fP$. Proposition
\ref{prop:monodromy} tells us that the monodromy $N$ on
$H^1_{\dR,\tau}(X_{\Sigma}, \mathscr{F}(\mathrm{k}))$
coincides with $\iota^\tau\circ I^\tau$. By Proposition \ref{prop:deShalit} 
the kernel of $N$ is $$\iota^\tau\circ \delta^{-1} (\bigoplus_i
H^1(\Gamma_{i,0}, V(\mathrm{k}))).$$ So, by (\ref{eq:hodge-cover})
the restriction of $N$ to $M_{\tau}(\mathbf{f})$ is injective.
Hence,
$$\ker(N)\cap \Fil^{\frac{w+\min_\tau\{ k_\tau\}}{2}-2}D_{\st, F_\fp^\ur}(\rho_{f_B,\fP,\fp})_\tau=0,$$ as desired.
\end{proof}

Let $P^\tau$ be the operator attached to $\mathscr{F}(\mathrm{k})$
(see Section \ref{ss:apply}).

\begin{lem}\label{lem:PI-comp-kappa} Let $\omega_{\mathbf{f}}^\tau$ be as in
the proof of Theorem \ref{thm:semistable}. Then $$
P^\tau(\omega_{\mathbf{f}}^\tau)=\kappa^{\mathrm{col},\tau}(c_{\mathbf{f}})
, \hskip 20pt I^\tau(\omega_{\mathbf{f}}^\tau)=c_{\mathbf{f}}. $$
\end{lem}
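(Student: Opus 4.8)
The plan is to reduce the lemma to a computation for a single harmonic cocycle, and then to identify the map $I^\tau$ with the residue (Schneider--Teitelbaum) recipe and $P^\tau$ with the Coleman integration recipe used to define $\kappa^{\mathrm{col},\tau}$. Recall from the construction in Section~\ref{ss:Teitelbaum} that the $h$-tuple $g^\tau$ attached to $\mathbf{f}=(f_1,\dots,f_h)$ is $g^\tau=(g^\tau_{c_{f_1}},\dots,g^\tau_{c_{f_h}})$, built from the vector of harmonic cocycles $c_{\mathbf{f}}=(c_{f_i})_i$; thus $\omega^\tau_{\mathbf{f}}$ is, componentwise, the form $\omega^\tau_{c}:=g^\tau_{c}(z)\,(zX_\tau+Y_\tau)^{k_\tau-2}\,\rd z$ with $c=c_{f_i}$. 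Using the transformation law of $g^\tau_c$ under $\widetilde{\Gamma}_i$, the $\GL(2,F_\fp)$-action on $V_\tau(k_\tau,(w-k_\tau)/2)$ and $\wvec{a}{b}{c}{d}z=\frac{az+b}{cz+d}$, one checks (for $w=2$) that $\omega^\tau_c$ is $\Gamma_{i,0}$-invariant, hence defines a class in $H^1_{\dR,\tau}(X_{\Gamma_{i,0}},\mathscr{F}(\mathrm{k}))$. It therefore suffices to prove, for each $i$ and each $c\in C^1_\har(V(\mathrm{k}))^{\Gamma_{i,0}}$, the two identities $I^\tau([\omega^\tau_c])=c$ and $P^\tau([\omega^\tau_c])=[\lambda^\tau_c]$, since $\kappa^{\mathrm{col},\tau}(c_{\mathbf{f}})=([\lambda^\tau_{c_{f_i}}])_i$.

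For $I^\tau$ I would compute the residue of $\omega^\tau_c$ along the annulus $]e[$ attached to an oriented edge $e$. Substituting $g^\tau_c(z)=\int_{\RP^1(F_\fp)}(z-t)^{-1}\mu^\tau_c(t)$ and expanding $(zX_\tau+Y_\tau)^{k_\tau-2}=\sum_{j=0}^{k_\tau-2}\binc{k_\tau-2}{j}z^jX_\tau^jY_\tau^{k_\tau-2-j}$, the residue is evaluated term by term from the standard formula for the residue of the Cauchy kernel along an annulus: $z^j\,\rd z/(z-t)$ has residue $t^j$ along $]e[$ when $t$ lies in the clopen $U_e\subset\RP^1(F_\fp)$ determined by $e$, and residue $0$ otherwise (this is precisely the boundary-distribution mechanism of Schneider and Teitelbaum; cf.\ \cite{SchSt,Teit,deSh-1}). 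Hence $\Res_e(\omega^\tau_c)=\sum_j\binc{k_\tau-2}{j}\big(\int_{U_e}t^j\mu^\tau_c(t)\big)X_\tau^jY_\tau^{k_\tau-2-j}$, and pairing against $X_\tau^jY_\tau^{k_\tau-2-j}\otimes Q$ for $Q\in L_\fP(\mathrm{k})^\tau$ and comparing with the defining relation $\langle X_\tau^jY_\tau^{k_\tau-2-j}\otimes Q,c(e)\rangle=\binc{k_\tau-2}{j}\langle Q,\int_{U_e}t^j\mu^\tau_c\rangle$ yields $\Res_e(\omega^\tau_c)=c(e)$ for all $e$, i.e.\ $I^\tau([\omega^\tau_c])=c$; harmonicity of $c$ ensures that this residue datum lies in $C^1_\har(V(\mathrm{k}))^{\Gamma_{i,0}}$, so the class $[\omega^\tau_c]$ is well defined.

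For $P^\tau$ I would use the Coleman primitive $F=F_{\omega^\tau_c}$ for the branch fixed in Section~\ref{sec:cover-hodge}. Since $\CH$ is connected and $\omega^\tau_c$ is $\Gamma_{i,0}$-invariant, $\gamma F-F$ is a constant element of $V(\mathrm{k})$; evaluating at $\gamma z_0$ and using $F(\gamma z_0)-F(z_0)=\int_{z_0}^{\gamma z_0}\omega^\tau_c$ shows that its class in $H^1(\Gamma_{i,0},V(\mathrm{k}))$ coincides, with the sign conventions of Sections~\ref{sec:cover-hodge} and~\ref{ss:Teitelbaum}, with the class of the cocycle $\gamma\mapsto\int_{z_0}^{\gamma z_0}\omega^\tau_c$. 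Expanding again, $\int_{z_0}^{\gamma z_0}\omega^\tau_c=\sum_j\binc{k_\tau-2}{j}\big(\int_{z_0}^{\gamma z_0}z^jg^\tau_c(z)\,\rd z\big)X_\tau^jY_\tau^{k_\tau-2-j}$ with the integrals in the chosen Coleman branch, and pairing against $X_\tau^jY_\tau^{k_\tau-2-j}\otimes Q$ reproduces exactly the formula $\langle X_\tau^jY_\tau^{k_\tau-2-j}\otimes Q,\lambda^\tau_c(\gamma)\rangle=\binc{k_\tau-2}{j}\langle Q,\int_{z_0}^{\gamma z_0}z^jg^\tau_c(z)\,\rd z\rangle$ defining $\lambda^\tau_c$. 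Hence $\int_{z_0}^{\gamma z_0}\omega^\tau_c=\lambda^\tau_c(\gamma)$ and $P^\tau([\omega^\tau_c])=[\lambda^\tau_c]=\kappa^{\mathrm{col},\tau}(c)$ componentwise, which is the claim.

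The step I expect to be the main obstacle is keeping all the conventions aligned: the duality $\langle\cdot,\cdot\rangle$ between $V_\sigma(k,v)$ and $L_{\fP,\sigma}(k,v)$, the symmetric pairing $<\cdot,\cdot>$ on $V(\mathrm{k})$ that underlies the self-duality of $H^1_{\dR,\tau}$, the automorphy factors in the transformation law of $g^\tau_c$, the normalization of $\Res_e$, and the fixed branch of Coleman integration all have to be chosen compatibly so that the residue recipe and the Coleman recipe land on literally the same cocycles (and with the correct sign), not merely on cohomologous or scalar-proportional ones. Once these normalizations are fixed in accordance with Sections~\ref{sec:cover-hodge} and~\ref{ss:Teitelbaum}, both identities are formal consequences of the two displayed defining formulas for $\mu^\tau_c$ and $\lambda^\tau_c$.
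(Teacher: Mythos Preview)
Your proposal is correct and follows essentially the same approach as the paper: the identity for $P^\tau$ is unwound directly from the definitions of the Coleman primitive and of $\lambda^\tau_c$, and the identity for $I^\tau$ is proved by computing $\Res_e$ of the Cauchy-kernel integral defining $g^\tau_c$, reducing to the moments $\int_{U_e}t^j\mu^\tau_c$ and hence to $c(e)$ (the paper packages this via the auxiliary function $g^\tau_{i,e}(z)=\int_{U(e)}(z-t)^{-1}\mu^\tau_i(t)$ and the observation that $g^\tau_i-g^\tau_{i,e}$ is analytic on $B(e)$, which is exactly your ``residue of the Cauchy kernel along an annulus'' statement). The only point to make precise is the interchange of $\Res_e$ with the boundary integral, which the paper notes explicitly and which your argument uses implicitly.
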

\begin{proof} The first formula comes from the definitions.

The proof of the second formula is similar to that of \cite[Theorem
3]{Teit}. 
Let $\mu^\tau_i$ ($i=1,\cdots,h$) be the rigid analytic
distributions on $\mathrm{P}^1(F_\fp)$ coming from $c_\mathbf{f}$
(see Section \ref{sec:Teit-L-inv}). Recall that
$$ g_{i}^\tau(z) = \int_{\RP^1(F_\fp)} \frac{1}{z-t}\mu_i^\tau (t) .
$$ For each edge
$e$ of $\CT$ let $B(e)$ be the affinoid open disc in $\RP^1(\BC_p)$
that corresponds to $e$. Assume that $B(e)$ meets the limits set
$\RP^1(F_\fp)$ in a compact open subset $U(e)$. Put
$$ g^\tau_{i,e}(z)=\int_{U(e)} \frac{1}{z-t} \mu_i^\tau(t).$$ Let
$a(e)$ be a point in $U(e)$. Expanding $\frac{1}{z-t}$ at $a(e)$ we
obtain that
$$ g^\tau_{i,e}(z) =\sum_{n=0}^{+\infty} \frac{1}{(z-a(e))^{n+1}} \int_{U(e)}  (t-a(e))^n
\mu^\tau_i(t) ,
$$ and thus $g^\tau_{i,e}(z)$ converges on the complement of
$B(e)$. Note that $g^\tau_i-g^\tau_{i,e}$ is analytic on $B(e)$. So,
we have
\begin{eqnarray*} && I^\tau(g_i^\tau (z X_\tau +
Y_\tau)^{k_\tau-2}\rd z) (e) = \Res_e \Big(g_i^\tau (z X_\tau +
Y_\tau)^{k_\tau-2}\rd z \Big) = \Res_e \Big( g^\tau_{i,e}
(zX_\tau+Y_\tau)^{k_\tau-2}\rd z \Big) \\ && = \Res_e \Big(
\int_{U(e)} \frac{ ( z X_\tau + Y_\tau)^{k_\tau-2} } {z-t}
\mu_i^\tau(t) \Big) = \int_{U(e)} ( t X_\tau + Y_\tau)^{k_\tau-2}
\mu_i^\tau(t) = c_{f_i}(e),
\end{eqnarray*} where the fourth equality follows from the fact that $\Res_e$ commutes with $\int_{U(e)} \ \cdot \ \mu_i^\tau(t)$.
\end{proof}

\begin{thm} \label{thm:main-text} Let $f_\infty$ be as above. Then
$\CL_{FM}(f_\infty)=\CL_T(f_\infty)$.
\end{thm}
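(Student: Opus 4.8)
The plan is to reduce the asserted equality to a statement living entirely inside the de Rham cohomology of the Shimura curve, and then to recognize the two $L$-invariants as two different ways of measuring the position of the Hodge line relative to the frame $(\ker N, D^{(2)})$. First I would twist $f_\infty$ by a power of $\chi_{F,\cyc}$ so that $w=2$, which affects neither $\CL_{FM}$ nor $\CL_T$, and pass to $\widehat{F_\fp^\ur}$-coefficients; it then suffices to prove $\CL_{FM,\tau}(f_\infty)=\CL_{T,\tau}(\mathbf{f})=\ell_\tau$ for every embedding $\tau\colon F_\fp\hookrightarrow L_\fP$. Using Proposition~\ref{prop:semistable}, Corollary~\ref{cor:fil-isocry} and the identifications made in the proof of Theorem~\ref{thm:semistable}, on the $e_{\mathbf{f}}$-part one has a canonical isomorphism of filtered $(\varphi_q,N)$-modules between $D_{\st,\widehat{F_\fp^\ur}}(\rho_{f_B,\fP,\fp})$ (up to the crystalline factor $H^0$, which is harmless since its monodromy vanishes and its filtration is trivial) and $e_{\mathbf{f}}\cdot H^1_{\dR,\tau}(X_\Sigma,\mathscr{F}(\mathrm{k}))=\bigoplus_i e_{\mathbf{f}}\cdot H^1_{\dR,\tau}(X_{\Gamma_{i,0}},\mathscr{F}(\mathrm{k}))$. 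Under it $\Fil^{j_0}$ corresponds to $M_\tau(\mathbf{f})=L_\fP\,\omega_{\mathbf{f}}^\tau$, by Proposition~\ref{prop:monodromy} the monodromy $N$ corresponds to $\iota^\tau\circ I^\tau$, and by Proposition~\ref{prop:deShalit} the kernel $D^{(1)}_\tau=\ker N$ corresponds to $\mathrm{im}(\iota^\tau\circ\delta^{-1})$, the image of $\bigoplus_i H^1(\Gamma_{i,0},V(\mathrm{k}))$.

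Next I would run the key computation. By Lemma~\ref{lem:PI-comp-kappa} we have $I^\tau(\omega_{\mathbf{f}}^\tau)=c_{\mathbf{f}}$ and $P^\tau(\omega_{\mathbf{f}}^\tau)=\kappa^{\mathrm{col},\tau}(c_{\mathbf{f}})$, and by the definition of the Teitelbaum invariant $\kappa^{\mathrm{col},\tau}(c_{\mathbf{f}})=\ell_\tau\,\kappa^{\mathrm{sch}}(c_{\mathbf{f}})$. Since $P^\tau\circ\iota^\tau=\delta$ and, after the identification $C^1_\har(V(\mathrm{k}))^{\Gamma_{i,0}}\cong C^1(V(\mathrm{k}))^{\Gamma_{i,0}}/\partial C^0(V(\mathrm{k}))^{\Gamma_{i,0}}$ of Section~\ref{ss:cover-hodge} one has $\kappa^{\mathrm{sch}}=\epsilon=-\delta$, it follows that
$$ P^\tau\big(N(\omega_{\mathbf{f}}^\tau)\big)=P^\tau\big(\iota^\tau(c_{\mathbf{f}})\big)=\delta(c_{\mathbf{f}})=-\kappa^{\mathrm{sch}}(c_{\mathbf{f}}). $$
Hence $P^\tau\big(\omega_{\mathbf{f}}^\tau+\ell_\tau N(\omega_{\mathbf{f}}^\tau)\big)=0$, i.e.\ the element $x:=\omega_{\mathbf{f}}^\tau+\ell_\tau N(\omega_{\mathbf{f}}^\tau)$ lies in $\ker P^\tau$.

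The main step is then to show that $\ker P^\tau=D^{(2)}_\tau$ on the $e_{\mathbf{f}}$-part. On one hand, since $P^\tau$ splits $\iota^\tau\circ\delta^{-1}$, its kernel is a complement of $\mathrm{im}(\iota^\tau\circ\delta^{-1})=\ker N=D^{(1)}_\tau$. On the other hand, because $P^\tau$ is built from Coleman's $p$-adic integral, it is compatible with the $q$-Frobenius (Coleman primitives commute with $\varphi_q$ up to an additive constant, which cancels in $\gamma(F_\omega)-F_\omega$, and $\gamma\in\Gamma_{i,0}$ commutes with $\varphi_q$ in the relevant sense), so $\ker P^\tau$ is $\varphi_q$-stable. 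As $D^{(2)}_\tau$ is the unique $\varphi_q$-stable complement of $D^{(1)}_\tau$ — the two Frobenius slopes differ, being interchanged by $N$, cf.\ the proof of \cite[Lemma 2.3]{IoSp} — we get $\ker P^\tau=D^{(2)}_\tau$, so $x\in D^{(2)}_\tau$. Since $N^2=0$ on a monodromy module, $N(x)=N(\omega_{\mathbf{f}}^\tau)$, and the defining property of $\CL_{FM,\tau}$ gives
$$ x-\CL_{FM,\tau}(f_\infty)\,N(x)=\omega_{\mathbf{f}}^\tau+\big(\ell_\tau-\CL_{FM,\tau}(f_\infty)\big)N(\omega_{\mathbf{f}}^\tau)\in\Fil^{j_0}_\tau=L_\fP\,\omega_{\mathbf{f}}^\tau. $$
Because $N(\omega_{\mathbf{f}}^\tau)$ is a nonzero element of $\ker N$ (injectivity of $N|_{M_\tau(\mathbf{f})}$, from the proof of Theorem~\ref{thm:semistable}) and $\Fil^{j_0}_\tau\cap\ker N=0$, it is not an $L_\fP$-multiple of $\omega_{\mathbf{f}}^\tau$, which forces $\ell_\tau=\CL_{FM,\tau}(f_\infty)$. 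Letting $\tau$ range over all embeddings and untwisting yields $\CL_{FM}(f_\infty)=\CL_T(f_\infty)$.

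The step I expect to be the main obstacle is the Frobenius-compatibility of the Coleman-integration splitting $P^\tau$ used above, i.e.\ the $\varphi_q$-stability of $\ker P^\tau$: this is precisely where one must control the interplay between Coleman's $p$-adic integration, the $\varphi_q$-structure on $H^1_\dR$, and the Coleman--Iovita description of the monodromy (Proposition~\ref{prop:monodromy}), and it is the weight-$\mathrm{k}$ analogue of the heart of \cite{IoSp}. The extra bookkeeping caused by the coefficient systems $\mathscr{F}(\mathrm{k})$ should be absorbed by the de Rham machinery of Section~\ref{sec:cover-hodge}, in particular the Hodge-like decomposition $(\ref{eq:hodge-cover})$.
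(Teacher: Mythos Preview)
Your argument is essentially the paper's own proof: twist to $w=2$, identify the $e_{\mathbf f}$-part of $H^1_{\dR,\tau}$ with (a copy of) $D_{\st,\widehat{F_\fp^\ur}}(\rho_{f_B,\fP,\fp})_\tau$, use Proposition~\ref{prop:monodromy} to get $N=\iota^\tau\circ I^\tau$, decompose $D_\tau=\ker N\oplus\ker P^\tau$, and combine Lemma~\ref{lem:PI-comp-kappa} with $P^\tau\circ\iota^\tau=\delta$ and $\epsilon=\kappa^{\mathrm{sch}}=-\delta$ to produce the element $\omega_{\mathbf f}^\tau+\ell_\tau N(\omega_{\mathbf f}^\tau)\in\ker P^\tau$. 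Your element is exactly the paper's $y$ (the paper writes $\omega_{\mathbf f}^\tau=x+y$ and derives $x=-\CL_{T,\tau}N(\omega_{\mathbf f}^\tau)$, i.e.\ $y=\omega_{\mathbf f}^\tau+\CL_{T,\tau}N(\omega_{\mathbf f}^\tau)$), and the final comparison with $\CL_{FM,\tau}$ via $\ker N\cap\Fil^{j_0}=0$ is identical.

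The one substantive difference is that you single out, and try to justify, the identification $\ker P^\tau=D^{(2)}_\tau$. The paper simply invokes the definition of $\CL_{FM}$ for $y\in\ker P^\tau$ (its equation (\ref{eq:L-FM})) without comment; as you correctly observe, this step is exactly the assertion that the Coleman-integration splitting of the covering filtration sequence is the $\varphi_q$-eigenspace splitting. Your heuristic (Coleman primitives are characterized by Frobenius-equivariance, so $P^\tau$ commutes with $\varphi_q$ and $\ker P^\tau$ is $\varphi_q$-stable, hence coincides with the unique Frobenius-stable complement of $\ker N$) is the right idea and is what underlies the analogous step in \cite{IoSp}; it would need to be made precise for the sheaf $\mathscr{F}(\mathrm k)$ and the chosen branch of $\log_p$, but there is no new obstruction beyond the weight-$2$ case. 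In short: your route is the paper's route, and the point you flag as the main obstacle is precisely the step the paper takes for granted.
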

\begin{proof} Twisting $f_\infty$ by a central character we may
assume that $w=2$.

Put $D_\tau=H^0\otimes_{\BQ_p} e_{\mathbf{f}}
H^1_{\dR,\tau}(X_{\Sigma}, \mathscr{F}(\mathrm{k}) )$. Note that
$N=\iota^\tau\circ I^\tau$. As the kernel of $N$ coincides with the
image of $\iota^\tau\circ \delta^{-1}$ and $P^\tau$ splits
$\iota^\tau\circ \delta^{-1}$, we have $D_\tau = \ker(N) \oplus
\ker(P^\tau)$. Write $\omega_{\mathbf{f}}^\tau= x+y$ according to
this decomposition.  Then \begin{equation}\label{eq:proof-a}
\iota^\tau\circ \delta^{-1}\circ P^\tau (\omega^\tau_\mathbf{f}) = x
.\end{equation} By the proof of Theorem \ref{thm:semistable}, $y$ is
non-zero and so $N(y)\neq 0$.

By Lemma \ref{lem:PI-comp-kappa} and the definition of Teitelbaum
type $L$-invariant, $\CL_{T,\tau}(f_\infty)$ is characterized by the
property \begin{equation} \label{eq:proof-b}
(P^\tau-\CL_{T,\tau}(f_\infty) \epsilon\circ
I^\tau)\omega^\tau_{\mathbf{f}} = 0, \end{equation} where $\epsilon$
is the map defined by (\ref{eq:sch}) which coincides with
$\kappa^{\mathrm{sch}}$.
 As
$\delta^{-1}\circ\epsilon =-\mathrm{id}$ and $\iota^\tau\circ
I^\tau=N$, we have \begin{equation}\label{eq:proof-c} \iota^\tau
\circ \delta^{-1} \circ \epsilon \circ I^\tau
(\omega^\tau_\mathbf{f}) = - N(\omega_\mathbf{f}^\tau) .
\end{equation} By (\ref{eq:proof-a}), (\ref{eq:proof-b}) and (\ref{eq:proof-c}) we get
\begin{equation}\label{eq:L-Teit}
\CL_{T,\tau}(f_\infty)N(\omega_\mathbf{f}^\tau) + x = 0.
\end{equation}

By the definition of Fontaine-Mazur $L$-invariant,
$\CL_{FM,\tau}(f_\infty)$ is characterized by the property
\begin{equation} \label{eq:L-FM}
y-\CL_{FM,\tau}(f_\infty)N(y) \in
H^0\otimes_{\BQ_p}\Fil^{\frac{w+\min_\tau\{ k_\tau\}}{2}-2}
H^1_{\dR,\tau}(X_{\Sigma},  \mathscr{F}(\mathrm{k})).
\end{equation}

 Combining  (\ref{eq:L-Teit}) and (\ref{eq:L-FM})  we obtain
{\allowdisplaybreaks \begin{eqnarray*} &&
(\CL_{FM,\tau}(f_\infty)-\CL_{T,\tau}(f_\infty))N(y) \\ & = &
\CL_{FM,\tau}(f_\infty)N(y) -
\CL_{T,\tau}(f_\infty)N(\omega_\mathbf{f}^\tau)
\\ & \in & \omega_{\mathbf{f}}^\tau + H^0\otimes_{\BQ_p}\Fil^{\frac{w+\min_\tau\{ k_\tau\}}{2}-2}
H^1_{\dR,\tau}(X_{\Sigma},  \mathscr{F}(\mathrm{k}) ) \\ &=&
H^0\otimes_{\BQ_p}\Fil^{\frac{w+\min_\tau\{ k_\tau\}}{2}-2}
H^1_{\dR,\tau}(X_{\Sigma},  \mathscr{F}(\mathrm{k}) ).
\end{eqnarray*} }

\noindent But $N(y)$ is in $\ker(N)$ and is non-zero, and by Theorem
\ref{thm:semistable}
$$\ker(N)\cap H^0\otimes_{\BQ_p}\Fil^{\frac{w+\min_\tau\{
k_\tau\}}{2}-2} H^1_{\dR,\tau}(X_{\Sigma}, \mathscr{F}(\mathrm{k}) )
= 0 .$$ Therefore
$$\CL_{FM,\tau}(f_\infty)-\CL_{T,\tau}(f_\infty)=0,$$ as wanted.
\end{proof}


\end{document}